\documentclass[11pt]{article}
\usepackage{amssymb}
\usepackage{amsmath}
\usepackage{amsthm}
\usepackage{verbatim}
\usepackage{graphicx}
\usepackage{epstopdf}
\usepackage{color}
\usepackage{epsf}
\usepackage[margin=1.161in]{geometry}
\usepackage{subfig}

\usepackage{enumitem}
\setlist{itemsep=0pt, topsep=0pt}

\usepackage{hyperref}

\linespread{1.02}


\newtheorem{theorem}{Theorem}[section]
\newtheorem{lemma}[theorem]{Lemma}
\newtheorem{claim}[theorem]{Claim}

\newtheorem{conjecture}[theorem]{Conjecture}

\newtheorem{observation}[theorem]{Observation}
\newtheorem{proposition}[theorem]{Proposition}

\numberwithin{subcase}{case}

\newtheorem{definition}[theorem]{Definition}




\newcommand{\ep}{\epsilon}

\newcommand{\bbE}{{\mathbb E}}

\newcommand{\cA}{{\mathcal A}}
\newcommand{\cB}{{\mathcal B}}

\newcommand{\cH}{{\mathcal H}}
\newcommand{\cM}{{\mathcal M}}

\newcommand{\cS}{{\mathcal S}}
\newcommand{\cT}{{\mathcal T}}
\newcommand{\cW}{{\mathcal W}}
\newcommand{\cX}{{\mathcal X}}




\newcommand{\floor}[1]{\left\lfloor#1\right\rfloor}
\newcommand{\ceiling}[1]{\left\lceil#1\right\rceil}
\newcommand{\con}{\mathrm{con}}

\parindent 0.2in
\parskip 0.02in

\begin{document}

\title{Monochromatic cycle partitions of graphs with large minimum degree} 

\author{Louis DeBiasio\thanks{Department of Mathematics, Miami University; Oxford, OH 45056\newline {\tt debiasld@miamioh.edu, nelsenll@miamioh.edu}} \thanks{Research supported in part by Simons Foundation Collaboration Grant \# 283194.} \and Luke Nelsen\footnotemark[1]}

\maketitle

\begin{abstract}
Lehel conjectured that in every $2$-coloring of the edges of $K_n$, there is a vertex disjoint red and blue cycle which span $V(K_n)$.  \L uczak, R\"odl, and Szemer\'edi proved Lehel's conjecture for large $n$, Allen gave a different proof for large $n$, and finally Bessy and Thomass\'e gave a proof for all $n$.  

Balogh, Bar\'at, Gerbner, Gy\'arf\'as, and S\'ark\"ozy proposed a significant strengthening of Lehel's conjecture where $K_n$ is replaced by any graph $G$ with $\delta(G)> 3n/4$; if true, this minimum degree condition is essentially best possible. 
We prove that their conjecture holds when $\delta(G)>(3/4+o(1))n$.  Our proof uses Szemer\'edi's regularity lemma along with the absorbing method of R\"odl, Ruci\'nski, and Szemer\'edi by first showing that the graph can be covered with monochromatic subgraphs having certain robust expansion properties.
\end{abstract}


\section{Introduction}

For the purposes of this paper, we consider the empty set, a single vertex, and an edge as cycles on $0$, $1$, and $2$ vertices respectively. By an $r$-coloring of a graph $G$, we mean a partition of its edge set into at most $r$ parts (i.e. exactly $r$ parts, some of which may be empty). Given an $r$-colored graph $G$, a partition of $G$ into monochromatic cycles is a collection of vertex disjoint monochromatic cycles which together span $V(G)$.  We denote a path or cycle on $k$ vertices by $P^k$ and $C^k$ respectively (subscripts will be reserved for colors).

In 1967, Gerencs\'er and Gy\'arf\'as \cite{GG} exactly determined the Ramsey number for all pairs of paths.  In the symmetric case (when the paths have the same length), the result can be stated as follows.

\begin{theorem}[Gerencs\'er, Gy\'arf\'as]\label{GGthm}
Every $2$-coloring of $K_n$ contains a monochromatic $P^k$ with $k> 2n/3$.
\end{theorem}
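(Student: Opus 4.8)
The plan is to study a longest monochromatic path. Color $E(K_n)$ with red and blue and let $P=v_1v_2\cdots v_p$ be a longest monochromatic path; by symmetry say it is red, and put $U=V(K_n)\setminus V(P)$, so $|U|=n-p$. Suppose for contradiction that $p\le 2n/3$; then $|U|\ge n/3$, which rearranges to $p\le 2|U|$, and in particular $U\ne\emptyset$. Maximality of $P$ gives two facts. First, every edge from $v_1$ to $U$ and every edge from $v_p$ to $U$ is blue — otherwise $P$ extends at an endpoint. Second, for every $x\in U$ and every $1\le i<p$, at least one of $xv_i,xv_{i+1}$ is blue — otherwise $v_1\cdots v_i\,x\,v_{i+1}\cdots v_p$ is a longer red path. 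Consequently the red neighbors of any fixed $x\in U$ on $P$ contain no two consecutive vertices, so $x$ has at least $\lceil p/2\rceil$ blue neighbors among $v_1,\dots,v_p$ (the endpoints $v_1,v_p$ always being among them).

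The goal now is to assemble, from the $\ge n/3$ vertices of $U$ and some vertices of $P$, a blue path on more than $p$ vertices, contradicting the maximality of $P$. The target configuration is a blue path that begins at $v_1$, ends at $v_p$, and in between strictly alternates between $U$ and the interior of $P$ while visiting all of $U$: a path $v_1,\,x_1,\,z_1,\,x_2,\,z_2,\dots,z_{|U|-1},\,x_{|U|},\,v_p$, where $x_1,\dots,x_{|U|}$ enumerates $U$ and the $z_j$ are distinct interior vertices of $P$ with $z_j$ blue-adjacent to both $x_j$ and $x_{j+1}$; this has $2|U|+1>2|U|\ge p$ vertices. To produce the connectors $z_j$ one leans on the min-degree bound together with the two universal blue vertices $v_1,v_p$: take a longest blue path that starts at $v_1$ and thereafter alternates between $U$ and $V(P)$, and argue that if it misses some $z\in U$ it can be extended or rerouted, since $z$ is blue-adjacent to $v_1$, to $v_p$, and to at least $\lceil p/2\rceil$ vertices of $P$. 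If instead such a maximal alternating path ends at a vertex $x\in U$ all of whose $\ge\lceil p/2\rceil$ blue neighbors on $P$ are already used, then it already has at least $p$ vertices, and one final vertex can be gained by appending $v_p$ (blue to $x$), or — if $v_p$ lies on the path — by closing the path into a blue cycle on $p$ vertices via the blue edge from $x$ back to $v_1$ and hanging an unused vertex off that cycle.

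The step I expect to be the main obstacle is precisely this last one: carrying out the weaving/rerouting cleanly, and in particular handling the near-extremal regime $n\equiv 0\pmod 3$, $p=2|U|$, where there is no numerical slack, so one must arrange that the connectors are genuinely distinct and avoid $\{v_1,v_p\}$ (or else argue via the blue cycle and a vertex off it), along with dispatching a few degenerate cases with $|U|$ small. The two maximality observations are routine bookkeeping; organizing the blue structure into a single long path is where the real content lies. (The bound $2n/3$ is essentially best possible: taking $V(K_n)=A\cup B$ with $|A|\approx 2n/3$, coloring all edges inside $A$ red and all other edges blue, the longest red path has $|A|$ vertices and the longest blue path has $2|B|+1$ vertices, both roughly $2n/3$.)
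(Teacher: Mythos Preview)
The paper does not prove Theorem~\ref{GGthm}; it is quoted as a classical result of Gerencs\'er and Gy\'arf\'as and simply cited. So there is no ``paper's own proof'' to compare against, and your proposal has to stand on its own.

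Your setup is standard and correct: maximality of the red path $P=v_1\cdots v_p$ forces every $v_1$--$U$ and $v_p$--$U$ edge to be blue, and for each $x\in U$ the red $P$-neighbours of $x$ form an ``independent set'' on the path, so $x$ has at least $\lfloor p/2\rfloor+1$ blue neighbours on $P$ (your $\lceil p/2\rceil$ is essentially this). The target inequality $p\le 2|U|$ is also set up correctly.

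However, the proof is incomplete, and you say so yourself: the construction of the long blue path is not carried out. More to the point, the specific plan you outline for the weaving step has a real gap. You propose to take a longest blue path starting at $v_1$ and alternating between $U$ and $V(P)$, and argue that if it cannot be extended then it already has more than $p$ vertices. But such a maximal alternating path may well terminate at some interior vertex $w\in V(P)$ that happens to have \emph{no} blue neighbour in $U$ at all (nothing you have derived forbids this). In that case maximality yields no lower bound on the length, and the argument stalls. Your case analysis only treats termination at a vertex of $U$; the termination-in-$P$ case is where the difficulty actually lives. The fixes you mention (``append $v_p$'', ``close into a cycle and hang off an unused vertex'') are not justified either: $v_p$ may already be on the path, and an unused $P$-vertex need not be blue-adjacent to anything on the cycle.

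A further symptom that the bipartite information alone is too weak: for two vertices $x,x'\in U$ the common blue $P$-neighbourhood is only guaranteed to have size at least $2(\lfloor p/2\rfloor+1)-p=2$, and those two common neighbours may be exactly $v_1$ and $v_p$. So a fixed enumeration $x_1,\dots,x_{|U|}$ may admit no interior connector $z_j$ between some consecutive pair. One genuinely needs either a Hall-type argument in the blue bipartite graph $G_2[U,V(P)]$ together with a careful ordering of $U$, or additional consequences of maximality that you have not invoked (for instance, constraints coming from edges \emph{inside} $U$, or from rerouting through red chords of $P$). As written, what you have is a promising outline whose decisive step is still missing.
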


In 1973, Rosta \cite{Ro} and independently, Faudree and Schelp \cite{FS} exactly determined the Ramsey number for all pairs of cycles, which gave an analog of Theorem \ref{GGthm} for cycles.  Later, this was slightly refined by Faudree, Lesniak, and Schiermeyer \cite{FLS} to give the following best possible result about long monochromatic cycles.

\begin{theorem}[Faudree, Lesniak, Schiermeyer]\label{FLSthm}
For $n\geq 6$, every $2$-coloring of $K_n$ contains a monochromatic $C^k$ with $k\geq 2n/3$.
\end{theorem}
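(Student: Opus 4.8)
The plan is to obtain the cycle by taking a longest monochromatic path, supplied by Theorem~\ref{GGthm}, and then either closing it into a cycle of essentially the same length or using the obstruction to closing it as structural information about the other colour. Without loss of generality fix a longest red path $P=v_1v_2\cdots v_p$; by Theorem~\ref{GGthm} we may assume $p>2n/3$, swapping colours if the longest monochromatic path is blue. If $v_1v_p$ is red then $v_1v_2\cdots v_pv_1$ is a red cycle on $p>2n/3$ vertices and we are done, so assume $v_1v_p$ is blue. Put $A=\{\,i:v_1v_{i+1}\text{ is red}\,\}$ and $B=\{\,i:v_iv_p\text{ is red}\,\}$, both subsets of $\{1,\dots,p-1\}$. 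If $i\in A\cap B$ for some $i$, then $v_1v_2\cdots v_i\,v_pv_{p-1}\cdots v_{i+1}\,v_1$ is a red cycle through all $p$ vertices of $P$, again a red cycle of length $>2n/3$; so we may assume $A\cap B=\varnothing$, and hence $|A|+|B|\le p-1$.

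Next I would read off what this obstruction forces. Since $P$ is a \emph{longest} red path, every red neighbour of $v_1$ and of $v_p$ lies on $P$, and these neighbourhoods are indexed precisely by $A$ and $B$; so $\deg_{\mathrm{red}}(v_1)+\deg_{\mathrm{red}}(v_p)=|A|+|B|\le p-1$ and therefore $\deg_{\mathrm{blue}}(v_1)+\deg_{\mathrm{blue}}(v_p)\ge 2(n-1)-(p-1)=2n-p-1$. Moreover each of the $n-p$ vertices off $P$ is blue-adjacent to both $v_1$ and $v_p$, the edge $v_1v_p$ is blue, and for every $i\in\{1,\dots,p-1\}\setminus(A\cup B)$ both $v_1v_{i+1}$ and $v_iv_p$ are blue. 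Thus the blue graph contains two adjacent vertices $v_1,v_p$ sharing a blue neighbourhood of size at least $n-p$ together with a controlled portion of the interior of $P$, and at least one of $v_1,v_p$ has blue degree at least $n-1-\tfrac{p-1}{2}$.

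The crux is to convert this blue structure into a blue cycle of length at least $2n/3$. I would rerun the path-and-rotation argument inside the blue graph: take a longest blue path, apply P\'osa rotations at each end to produce a large set of candidate endpoints, and use the blue complete-bipartite-type links between $\{v_1,v_p\}$, the $n-p$ off-path vertices, and the unused interior positions of $P$ to close this path into a long blue cycle; the computation should reveal that this fails only when $p$ is so close to $n$ that $P$ is near-spanning, and that case is settled directly by rerouting $P$ (only one or two vertices lie off it) into a red cycle of length at least $2n/3$. I expect the genuine difficulty to be quantitative rather than structural: closing a path into a cycle need not preserve its length, whereas Theorem~\ref{GGthm} only guarantees $p>2n/3$, so the final cycle must be shown to have at least $\lceil 2n/3\rceil$ vertices exactly. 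This is what forces the argument to be run with care according to $n\bmod 6$ and to dispatch a short list of small configurations by hand --- precisely why the hypothesis $n\ge 6$ is present --- and one must also check the ``symmetry'' that the obstruction isolated above cannot occur in both colours at once, which is what actually closes the proof. (Note that one cannot simply invoke a Ramsey number for a fixed cycle length here, since the length of the monochromatic cycle we produce must be allowed to vary.)
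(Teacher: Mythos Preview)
The paper does not prove Theorem~\ref{FLSthm} at all: it is quoted from Faudree, Lesniak, and Schiermeyer~\cite{FLS} as background, alongside the cycle Ramsey numbers of Rosta and of Faudree--Schelp on which it rests. There is therefore no ``paper's own proof'' to compare your attempt against, and any genuine proof would have to be measured against the original reference, not this paper.

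As for the attempt itself, what you have written is an outline, not a proof, and the omission is exactly at the point you label ``the crux.'' After the standard rotation step you arrive at $A\cap B=\varnothing$ and the blue-degree inequality $\deg_{\mathrm{blue}}(v_1)+\deg_{\mathrm{blue}}(v_p)\ge 2n-p-1$; but from there you only say you ``would rerun the path-and-rotation argument inside the blue graph'' and that ``the computation should reveal'' the desired cycle. That is precisely the step that has to be carried out. Note that when $p$ is large --- say $p$ close to $n$ --- your inequality gives only about $n-1$ for the sum of two blue degrees, which is far too weak to force a blue cycle of length $2n/3$ by any direct rotation argument; and your proposed fix for that regime (``rerouting $P$ into a red cycle'') is asserted, not argued. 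The actual result in \cite{FLS} is obtained by leaning on the full determination of the cycle Ramsey numbers $R(C_k,C_\ell)$, which already encodes the delicate parity and small-case analysis you allude to; reproducing it from a bare longest-path argument in the way you sketch is not something that follows from the ingredients you have assembled.
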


In \cite{GG}, Gerencs\'er and Gy\'arf\'as wrote a small, but historically influential, footnote which contained the seed of a new ``Ramsey-type" partitioning problem.  In the footnote was a simple proof that every $2$-coloring of $K_n$ has a cycle on $n$ vertices which is the union of a blue path and a red path (which in turn contains a monochromatic $P^{\ceiling{n/2}}$).  In a $2$-colored $K_n$, a cycle on $n$ vertices which is the union of a blue path and a red path immediately gives a partition of $K_n$ into two monochromatic paths; from this one can easily deduce that $K_n$ has a partition into a vertex disjoint monochromatic cycle and path of different colors.  Later, Lehel (see \cite{Ayel} and \cite{EGP}) conjectured that  every $2$-coloring of $K_n$ has a partition into a red cycle and blue cycle (note the requirement that the cycles have different colors). 

Lending further support to Lehel's conjecture, Gy\'arf\'as \cite{Gy} proved that in every $2$-coloring of $K_n$ there is a red cycle and a blue cycle which span the vertex set and have at most one common vertex. \L uczak, R\"odl, and Szemer\'edi \cite{LRS} proved Lehel's conjecture for large $n$ and later Allen \cite{Al} gave a different proof of Lehel's conjecture for smaller, but still large $n$.  Finally, Bessy and Thomass\'e \cite{BT} proved Lehel's conjecture for all $n$.

\begin{theorem}[Bessy, Thomass\'e]\label{BTthm}
Every $2$-coloring of $K_n$ has a partition into a red cycle and blue cycle.
\end{theorem}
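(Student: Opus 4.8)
\noindent\emph{Proof proposal.} Because the statement must hold for \emph{all} $n$ (not merely large $n$), the plan is to avoid regularity/absorption and instead argue by a direct structural and extremal analysis, starting from a path partition and repairing it into a cycle partition. First I would invoke the Gerencs\'er--Gy\'arf\'as footnote recalled above: $K_n$ has a Hamilton cycle that is the union of a red path and a blue path, and from this one immediately extracts a partition of $V(K_n)$ into a vertex-disjoint red path $P=p_1\cdots p_a$ and blue path $Q=q_1\cdots q_b$ (either of which may be empty or a single vertex/edge). Thus the real content is to close $P$ into a red cycle and $Q$ into a blue cycle, at the cost of shifting a bounded number of vertices between $P$ and $Q$.

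Among all partitions $(P,Q)$ of $V(K_n)$ into a red path and a blue path I would fix one that is extremal, e.g.\ minimizing $|P|$ with ties broken by a secondary condition on the endpoints. If $p_1p_a$ is red and $q_1q_b$ is blue we are done immediately, so assume not; say $p_1p_a$ is blue. The workhorse moves are then: (a) if an endpoint of $P$ sends a blue edge to an endpoint of $Q$, say $p_aq_1$ blue, replace $(P,Q)$ by $(p_1\cdots p_{a-1},\, p_aq_1\cdots q_b)$, which shrinks $P$; and (b) if $P$ has a red chord $p_ip_j$ whose leftover segment $p_{j+1}\cdots p_{i-1}$ can be spliced into $Q$ along blue edges, rebuild analogously. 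Minimality of $|P|$ forbids move (a), so $p_1$ and $p_a$ must be joined in red to both $q_1$ and $q_b$; pushing this reasoning through the interior chords of $P$ and $Q$ (and the symmetric possibility $q_1q_b$ red) should pin the coloring down to a very rigid shape.

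In the surviving rigid configurations — essentially: $V(K_n)$ splits into two parts with (almost) all cross edges of a single color, or one of the two color classes is disconnected with a dominant component $C_1$ — I would finish by an explicit construction tailored to each case. For example, if the red graph is disconnected, then the blue graph contains a spanning complete multipartite subgraph; if its largest part has size at most $n/2$ that subgraph is Hamiltonian, yielding a single blue cycle and an empty red cycle, and otherwise one threads a red path through the dominant part $C_1$ (using Theorem~\ref{FLSthm}-type reasoning inside $C_1$ if needed) and absorbs the remaining vertices into a blue cycle. The split-coloring case is handled by routing one long path of each color across the split.

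The hard part will be entirely in the last two steps: making the list of reattachment/rotation moves genuinely exhaustive, proving that they terminate (which is what dictates precisely which quantity to minimize when choosing the extremal $(P,Q)$ in the second step), and then classifying and resolving the handful of tight colorings without introducing spurious small-$n$ exceptions. Producing a case analysis that is simultaneously complete, non-redundant, and valid for every $n$ is where I expect essentially all of the real work to lie.
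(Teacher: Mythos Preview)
The paper does not prove Theorem~\ref{BTthm}; it is quoted as a background result due to Bessy and Thomass\'e \cite{BT}, with only a one-line citation and no argument. So there is no ``paper's own proof'' to compare your proposal against.

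That said, a brief comment on the proposal itself. Your plan is in the right spirit: the actual Bessy--Thomass\'e argument also starts from a red-path/blue-path partition and uses an extremal choice together with local rerouting moves. But the particular extremal quantity you suggest (minimize $|P|$, then a tiebreaker) is not the one that makes the argument terminate cleanly; Bessy and Thomass\'e work instead with a carefully chosen potential involving the number of ``colour changes'' along a Hamiltonian cycle, and the key step is a clever rotation lemma that is not visible from your outline. Your Step~3 (``pushing this reasoning through the interior chords \ldots should pin the coloring down to a very rigid shape'') and your Step~4 case analysis are exactly where the real content lies, and as written they are only a hope, not an argument: you have not identified a well-founded measure that strictly decreases under your moves, and the ``rigid configurations'' you list are not the ones that actually arise. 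In particular, the disconnected-colour case and the split-colouring case you describe are too coarse; the tight examples for Lehel's conjecture are more subtle than a clean bipartition. So while the high-level shape is reasonable, the proposal as it stands has a genuine gap at the heart of the proof.
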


Schelp \cite{S} raised the general problem of determining whether results such as Theorem \ref{GGthm}, Theorem \ref{FLSthm}, and Theorem \ref{BTthm}, which are about complete graphs, actually hold for graphs with sufficiently large minimum degree.  In particular he conjectured that the conclusion of Theorem \ref{GGthm} still holds if $K_n$ is replaced by any graph $G$ with $\delta(G)>\frac{3n}{4}$.  Gy\'arf\'as and S\"ark\"ozy \cite{GyS} proved that for all $\ep>0$ and sufficiently large $n$, if $G$ is a $2$-colored graph with $\delta(G)\geq (3/4+\ep)n$, then $G$ contains a monochromatic $P^k$ with $k\geq (2/3-\ep)n$.  Then Benevides, \L uczak, Skokan, Scott, and White \cite{BLSSW} proved a Schelp-type analog of Theorem \ref{FLSthm}; that is, for all $\ep>0$ and sufficiently large $n$, if $G$ is a $2$-colored graph with $\delta(G)\geq 3n/4$, then $G$ contains a monochromatic $C^k$ with $k\geq (2/3-\ep) n$ and they conjectured an exact version of this result (see Conjecture 8.3 in \cite{BLSSW}).

Inspired by the above results, Balogh, Barat, Gerbner, Gy\'arf\'as, and S\'ark\"ozy \cite{BBGGS} conjectured the following Schelp-type analog of Theorem \ref{BTthm}.

\begin{conjecture}[Balogh, Bar\'at, Gerbner, Gy\'arf\'as, S\'ark\"ozy] \label{3/4conjecture}
If $G$ is a $2$-colored graph on $n$ vertices with $\delta(G)>\frac{3n}{4}$, then $G$ has a partition into a red cycle and a blue cycle.
\end{conjecture}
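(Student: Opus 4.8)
\bigskip
\noindent\textit{Proof proposal.} Since the regularity-plus-absorbing method developed in this paper yields only the approximate bound $\delta(G) > (3/4+o(1))n$, reaching the exact threshold $\delta(G) > 3n/4$ calls for a stability refinement layered on top of it. The approximate result already settles every $G$ with $\delta(G) \ge (3/4+\eps)n$, so it suffices to treat the narrow window $3n/4 < \delta(G) < (3/4+\eps)n$; within this window I would split into a \emph{non-extremal} case and an \emph{extremal} case according to the edit distance from colored graphs witnessing the sharpness of $3n/4$, and derive a contradiction from the assumption that $G$ has no red-cycle--blue-cycle partition.

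First I would apply Szemer\'edi's regularity lemma and form the colored reduced graph $R$ together with its robustly expanding monochromatic pieces, exactly as in the body of the paper. In the non-extremal case --- when $G$ is $\g n^2$-far in edit distance from every ``bad'' configuration --- the robust expansion of these pieces should be strong enough that the approximate proof goes through with slack to spare: reserve a small absorbing structure and reservoir inside the relevant monochromatic pieces, cover the bulk of $V(G)$ by long monochromatic paths as before, and then absorb the $o(n)$ leftover vertices so as to close up exactly one red cycle and one blue cycle. The only new point is bookkeeping: one must verify that the pieces retain enough robust expansion after the reservoir is set aside, and the strict inequality $\delta(G) > 3n/4$, i.e.\ $\delta(G) \ge \lfloor 3n/4\rfloor + 1$, should supply precisely the surplus needed once $G$ is bounded away from extremal.

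The heart of the matter, and the step I expect to be the main obstacle, is the extremal case: show that every colored $G$ in this window failing to have the partition must be $o(n^2)$-close to one of boundedly many ``model'' colored graphs, and then show that if in addition $\delta(G) > 3n/4$ the graph does admit the partition after all. Concretely I would (i) use stability for the reduced graph to force a near-counterexample to have vertex set partitionable into boundedly many nearly-homogeneous blocks with colored adjacency pattern of one of finitely many types --- I expect controlled blow-ups of small colored graphs on about four parts, mirroring the four-part construction that makes $3n/4$ sharp; (ii) for each type, isolate the exact obstruction present at the boundary value $3n/4$ (typically a set $S$ of roughly $n/4$ vertices whose deletion splits the two color classes into pieces that cannot be recombined into single cycles of the two colors); and (iii) argue that $\delta(G) > 3n/4$ destroys that obstruction --- every vertex now has a neighbor ``across the cut'' in the color it needs --- after which an explicit finite case analysis, handling $n \bmod 4$ and the small exceptional values in the spirit of Bessy--Thomass\'e's treatment of Lehel's conjecture, assembles the red cycle and the blue cycle.

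The chief difficulties, in decreasing order of severity, are: (a) the extremal classification itself, since here --- unlike Lehel's conjecture on $K_n$ --- both the host graph and the coloring vary, so the list of near-extremal configurations is genuinely richer and each must be pinned down exactly rather than heuristically; (b) making the boundary-to-strict step (iii) uniform, because the obstruction may be diffuse --- spread over $o(n)$ vertices rather than a clean cut --- and one must still show that a single vertex of surplus degree repairs it; and (c) dovetailing the regularity-based non-extremal argument with the combinatorial extremal one so that the two together leave no gap in the degree window. I would expect (a) to consume the bulk of the work, with the machinery of this paper handling the complementary generic case more or less as it stands.
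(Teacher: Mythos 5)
There is a fundamental problem before any assessment of your strategy: the statement you are trying to prove is a \emph{conjecture} in this paper, not a theorem, and the paper itself shows it is \textbf{false} as literally stated. Proposition 2.1 exhibits a $2$-colored graph $F$ on $9$ vertices with $\delta(F)=7=\frac{3\cdot 9+1}{4}>\frac{3\cdot 9}{4}$ that has no partition into a red cycle and a blue cycle. So no proof of ``$\delta(G)>3n/4$ implies a red--blue cycle partition'' for all $n$ can exist; the correct target is the large-$n$ version (which was later established by Letzter, as the paper notes in the ``Note added in proof''), and your plan's final appeal to ``handling the small exceptional values in the spirit of Bessy--Thomass\'e'' is exactly where it must break down, since unlike Lehel's conjecture on $K_n$ there are genuine small counterexamples here.

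Setting that aside, what you have written is a research plan rather than a proof. The two steps you yourself identify as the heart of the matter --- the exact classification of near-extremal colored graphs in the window $3n/4<\delta(G)<(3/4+\eps)n$, and the argument that one unit of surplus degree destroys each extremal obstruction --- are precisely the content that would need to be supplied, and they are nontrivial: Section 2 of the paper shows there are at least three distinct families of extremal configurations ($F_1$, $F_2$, $F_3$, built on four-part and three-part vertex partitions with different colored adjacency patterns), each of which would require its own exact analysis. The paper proves only the asymptotic Theorem 1.5 via robust components, regularity, connected matchings, and absorption; it makes no attempt at the stability/extremal analysis you outline. Your roadmap is a reasonable one for the large-$n$ statement (and is broadly consonant with how the exact result was eventually proved), but as it stands it contains no proof of either of its two essential steps, and as a proof of the conjecture \emph{as stated} it cannot be completed.
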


They prove that their conjecture nearly holds in an asymptotic sense; that is, for all $\gamma>0$, there exists $n_0$ such that if $G$ is a $2$-colored graph on $n\geq n_0$ vertices with $\delta(G)\geq (\frac{3}{4}+\gamma)n$, then there is a vertex disjoint red cycle and blue cycle spanning at least $(1-\gamma)n$ vertices.

In this paper, we prove that their conjecture holds asymptotically.

\begin{theorem}\label{mainapprox}
For all $\gamma>0$, there exists $n_0$ such that if $G$ is a $2$-colored graph on $n\geq n_0$ vertices with $\delta(G)\geq (\frac{3}{4}+\gamma)n$, then $G$ has a partition into a red cycle and blue cycle.
\end{theorem}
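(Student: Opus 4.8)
The plan is to combine Szemer\'edi's regularity lemma with the absorbing method. First I would apply the regularity lemma to the $2$-colored graph $G$ (with the majority color on each regular pair determining a coloring of the reduced graph $R$), obtaining a cluster graph $R$ on $t$ clusters which inherits a high minimum degree, roughly $\delta(R) \geq (3/4 + \gamma/2)t$. The crucial structural step is to analyze what such a $2$-colored dense graph looks like: I would prove that $R$ can be covered by a bounded number of monochromatic subgraphs each of which is a robust expander in the sense needed for the R\"odl--Ruci\'nski--Szemer\'edi connecting/absorbing machinery — intuitively, the $3n/4$ threshold forces that one color class contains a large ``connected'' robustly-expanding piece, and a careful case analysis (driven by the extremal configuration showing $3n/4$ is tight: four clusters, with colors arranged so that no monochromatic component is too large) shows that after discarding $o(n)$ vertices the remaining vertex set decomposes into at most two monochromatic robust expanders whose union covers everything.

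Next I would set up the absorbing structure inside each monochromatic robust expander: using the expansion property, build an absorbing path (or absorbing cycle segment) $A$ in each color that can absorb any small leftover set of vertices of that color while remaining a monochromatic path. Then I would take an almost-spanning monochromatic cycle cover of the regular pairs: within each monochromatic robustly expanding piece of $R$, a standard application of the connecting lemma plus the blow-up of long paths in super-regular pairs yields a monochromatic cycle covering all but $o(n)$ of the corresponding vertices of $G$. After merging the two colors' pieces, all but a sublinear ``leftover'' set $L$ is covered by one red cycle and one blue cycle (plus the reserved absorbing paths). Finally, I would assign each vertex of $L$ to the absorbing path of its appropriate color and absorb it, then close each absorbing path into a cycle using the connecting lemma; this gives exactly one red cycle and one blue cycle spanning all of $V(G)$. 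Care is needed with the boundary cases where one color is degenerate (e.g., the red graph is empty on some part), but then the other color has minimum degree essentially $n$ and the problem reduces to a Hamilton-cycle-type statement.

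The main obstacle I expect is the structural dichotomy: proving that the high-minimum-degree condition forces a cover of (almost all of) $G$ by at most two monochromatic robust expanders, one per color. This is where the constant $3/4$ is genuinely used — below it there are colorings with no large monochromatic robustly expanding subgraph — so the argument must extract, from $\delta(G) \geq (3/4+\gamma)n$, either a single dominant monochromatic robust expander or a clean red/blue split of the vertex set into two such expanders, handling intermediate configurations (partite-like structures, small monochromatic components glued along a dense crossing color) by showing they either can be merged into a robust expander or can be partitioned directly. Everything after that — connecting lemma, absorption, turning long paths in super-regular pairs into cycles — is routine for this method, modulo bookkeeping to ensure the final object is exactly two cycles of distinct colors rather than a bounded-size cycle factor; I would control the latter by making the monochromatic robust expanders \emph{connected} (in each color) so that a single cycle per color suffices.
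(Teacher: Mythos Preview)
Your high-level strategy --- structural dichotomy into monochromatic robust pieces, absorbing paths inside those pieces, regularity/connected matchings to get almost-spanning cycles, then absorb the leftover --- is exactly the paper's architecture. But two points deserve comment, one a difference of organization and one a genuine gap.

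\textbf{Order of operations.} You propose to apply regularity first and then carry out the robust-component analysis inside the reduced graph $R$. The paper reverses this: it proves the structural lemma (existence of one or two $(\eta,\alpha)$-robust subgraphs $H_1\subseteq G_1$, $H_2\subseteq G_2$ covering $V(G)$) directly on $G$, builds the absorbing paths in $G$, and only then applies regularity with an initial partition that respects the sets $V(H_1)$, $V(H_2)$ already found. Either order can be made to work, but doing structure-first has a concrete advantage: the absorbing paths must live in $G$ and must absorb \emph{individual vertices of $G$}, including those lost to the garbage set $V_0$ and to rounding. If you discover the robust pieces only as subgraphs of $R$, you still need to argue that their blow-ups in $G$ are robust enough for vertex-level absorbing, and you must separately handle vertices in $V_0$; the paper's order avoids this bookkeeping.

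\textbf{The near-bipartite obstruction.} This is the real gap. Your sketch implicitly assumes that any monochromatic robust expander admits vertex-absorbing: for every $v$, many short odd cycles through $v$. But a robust component can be $\alpha^4$-near-bipartite --- think of $G_1$ being a dense balanced bipartite graph on $(1-o(1))n$ vertices. Such a component has essentially no odd cycles, so only \emph{pair-absorbing} is available: you can absorb $\{x,y\}$ with $x,y$ on opposite sides, but not a single vertex, and hence not an unbalanced leftover. The paper's structural lemma is engineered around this: it proves that whenever $H_i$ is near-bipartite with bipartition $\{X,Y\}$, the other-color component $H_{3-i}$ is \emph{not} near-bipartite and overlaps $Y$ substantially, so the parity imbalance can be exported to $H_{3-i}$ and absorbed there. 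Without this ``nice partition'' condition (and the reserved sets $S_i,T_i$ used to balance the leftover at the end), your final absorbing step can fail. This is not a degenerate boundary case you can wave away; it arises precisely at the $3/4$ threshold (see the paper's extremal examples $F_1,F_2$) and drives much of the case analysis.
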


In Section \ref{exact}, we give a small example to show that Conjecture \ref{3/4conjecture} does not hold for all $n$. Despite this, we propose Conjecture \ref{exactconjecture}, a slight strengthening of Conjecture \ref{3/4conjecture} for sufficiently large $n$.

\subsection{Notation}

For a natural number $k$, we write $[k]$ to mean the set $\{1, 2, \dots, k\}$.  Throughout the paper we use ``color 1" and ``red" interchangeably and likewise for ``color 2" and ``blue."  In a $2$-colored graph $G$ with $2$-coloring $E(G)=E_1\cup E_2$, we let $G_i$ be the graph $(V(G), E_i)$ for $i\in [2]$.  We sometimes write $\delta_i(G)$ to mean $\delta(G_i)$.  For subsets $A,B\subseteq V(G)$, we write $\delta(A,B)$ to mean the minimum number of neighbors any vertex in $A$ has in $B$, and we write $E_{G_i}(A,B)$ to mean the set of edges in $G_i$ with an endpoint in $A$ and the other endpoint in $B$.  We also write $e_i(A, B)$ or $e_{G_i}(A, B)$ to mean $|E_{G_i}(A,B)|$.  For a vertex $v\in V(G)$, we write $\deg_i(v)$ in place of $\deg_{G_i}(v)$, and $\deg_i(v, A)$ for $\deg_{G_i}(v, A)$.  Given a graph $G$ and disjoint subsets $X, Y\subseteq V(G)$, we let $G[X,Y]$ be the bipartite subgraph induced by all edges having one endpoint in $X$ and one endpoint in $Y$.  We say a $(U,V)$-bipartite graph is balanced if $|U|=|V|$.

Throughout the paper, we will write $\alpha \ll \beta$ to mean that given $\beta$, we can choose $\alpha$ small enough so that $\alpha$ satisfies all of necessary conditions throughout the proof.  More formally, we can set $\alpha := \min \{f_1(\beta), f_2(\beta), \dots, f_k(\beta)\}$, where each $f_i(\beta)$ corresponds to the maximum value of $\alpha$ allowed so that the corresponding argument in the proof holds.  In order to simplify the presentation, we will not determine these functions explicitly.

\section{Sharpness examples}\label{exact}

\begin{proposition}
There exists a $2$-colored graph $F$ on $9$ vertices with $\delta(G)=7=\frac{3 \cdot 9 + 1}{4}$ such that $F$ does not have a partition into a red cycle and blue cycle.
\end{proposition}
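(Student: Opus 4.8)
The plan is to exhibit one explicit graph $F$ on $9$ vertices together with an explicit $2$-coloring of its edges, and then verify the two required properties directly. Since $\delta(F)=7$ on $9$ vertices forces $\overline{F}$ to have maximum degree at most $1$, the graph $F$ must be $K_9$ with a nonempty matching $M$ deleted, where $1\le |M|\le 4$; so the search space is genuinely small. I would take $F=K_9-M$ for a suitable matching $M$, which makes $\delta(F)=7$ immediate (the endpoints of $M$ have degree $7$ and every other vertex has degree $8$), and I would choose the $2$-coloring so that each color class is "degenerate" in a way that the other color cannot exploit: e.g. each color has a vertex incident only to edges of the other color, or a cut vertex whose removal disconnects that color class, or an independent set of more than $|V(F)|/2$ vertices — the point being to arrange these obstructions transversally rather than complementarily.

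With such an $F$ fixed, showing that $F$ has no partition into a red cycle and a blue cycle is a finite case analysis organized by the number of vertices on each of the two cycles, keeping in mind that under the conventions of this paper a monochromatic "cycle" may have $0$, $1$, or $2$ vertices. If a red cycle has at most $2$ vertices, then the blue cycle must be a spanning blue cycle on at least $7$ of the $9$ vertices; I would rule this out from the structural feature built into the blue class — for instance, a blue-isolated vertex cannot lie on any blue cycle of length $\ge 3$; a vertex of blue-degree $1$ in the relevant induced subgraph cannot lie on a Hamilton cycle of it; and a blue class that is disconnected on the relevant vertex set, or bipartite with parts of unequal size, has no spanning cycle there. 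The symmetric statement handles a blue cycle with at most $2$ vertices. The substantive case is when both cycles have at least $3$ vertices: here one uses that each color class is disconnected (or has a cut vertex), so that a monochromatic cycle of length $\ge 3$ is confined to a single block; this severely restricts the possible vertex sets of the two cycles, and one then checks, over the few ways to split $V(F)$ compatibly, that no split simultaneously induces a spanning red cycle on one part and a spanning blue cycle on the other.

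I expect the main obstacle to be the design of $F$ rather than the verification. Because $K_9-M$ is so dense ($\alpha(F)\le 2$), each color class is fairly rich, and the natural first attempts yield a coloring in which some vertex set $S$ carries a spanning red cycle while $V(F)\setminus S$ carries a spanning blue cycle — typically because some induced near-complete bipartite graph $K_{k,k}$ (such as $K_{4,4}$ minus a perfect matching, which is Hamiltonian) or complete multipartite graph happens to admit a Hamilton cycle. The coloring must be tuned so that whenever one color can span a set by a cycle, the complementary set is defective for the other color, and it must achieve this simultaneously for the empty, singleton, and single-edge "cycles" as well. Once such a coloring is identified, the remaining case checking is routine; in Section \ref{exact} I would present this $F$ and carry out that checking.
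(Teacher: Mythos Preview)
Your approach is exactly the paper's: exhibit an explicit $F=K_9-M$ for a matching $M$ (in the paper $|M|=4$, with missing edges $z_1z_4$, $z_2z_3$, $x_1y_2$, $x_2y_1$), give an explicit $2$-coloring, note that $\delta(F)=7$ is immediate, and then dispose of the partition question by a finite case check (which the paper itself abbreviates to ``Checking cases shows\ldots'').

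The gap is that you have not actually produced $F$. Your proposal correctly identifies the search space and the shape of the verification, and your heuristics for what the coloring should accomplish are sensible, but you explicitly flag ``the design of $F$'' as the main obstacle and then defer it. For this proposition the construction \emph{is} the proof; without a concrete $9$-vertex coloring in hand there is nothing to verify, and the structural wish-list you give (a color-isolated vertex, a cut vertex, an oversized independent set in one color) does not by itself guarantee that some $F$ with all the needed transversal obstructions exists. You need to write down the nine vertices, the four missing edges, and the color of each of the remaining $32$ edges, and then run your case analysis on that specific object.
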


\begin{proof}
\begin{figure}[ht]
\centering
\scalebox{.75}{\input{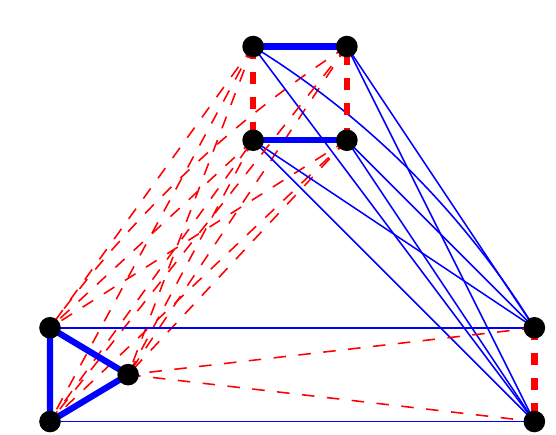_t}}~~~~~~~~~~~~~
\caption[]{A $2$-colored graph (with the red edges shown as dashed lines) $F$ on $9$ vertices with $\delta(F)=7=\frac{3\cdot 9 +1}{4}$ which does not have a partition into a red cycle and blue cycle.}
\label{9vertex}
\end{figure}

(See Figure \ref{9vertex}) Let $F$ be the graph on the vertex set $\{x_1, x_2, x_3, y_1, y_2, z_1, z_2, z_3, z_4\}$ such that the complement of the edge set is $\{z_1z_4, z_3z_2, x_1y_2, x_2y_1\}$.  Color all edges $x_iz_j$ red, all edges $y_iz_j$ blue, all edges $x_ix_j$ blue, $y_1y_2$ red, $x_1y_1$, $x_2y_2$ blue, $x_3y_1$, $x_3y_2$ red, $z_1z_2$, $z_3z_4$ blue, and $z_1z_3$, $z_2z_4$ red.  

The complement of $E(F)$ is a matching and thus $\delta(F)=8-1=\frac{3\cdot 9 +1}{4}$.  Checking cases shows that $F$ does not have a partition into a red cycle and a blue cycle.
\end{proof}

\begin{proposition}
Let $n=4q+r$ with $0\leq r\leq 3$.  For all $r$, there exists a $2$-colored graph $F$ with $\delta(F)=\ceiling{\frac{3n-3}{4}}-1$ such that $F$ does not have a partition into a red cycle and blue cycle.
\end{proposition}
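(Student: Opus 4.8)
The plan is to generalize the $9$-vertex construction from the previous proposition to all residues $r \in \{0,1,2,3\}$, producing for each $n = 4q+r$ a $2$-colored graph $F$ on $n$ vertices whose minimum degree is exactly $\lceil (3n-3)/4 \rceil - 1$ and which admits no partition into a red cycle and a blue cycle. The obstruction to such a partition in the $9$-vertex example came from a carefully chosen $3$-part structure: a set playing the role of $\{x_1,x_2,x_3\}$ all of whose internal edges are blue, a set $\{y_1,y_2\}$, and a ``middle'' set $\{z_1,z_2,z_3,z_4\}$ such that every $x$--$z$ edge is red and every $y$--$z$ edge is blue. The idea is that any monochromatic cycle partition is forced to cover the $z$-set by a single blue cycle or a single red cycle, and neither can be completed consistently with the colors on the cross edges and the missing matching. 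So first I would set up, for each $r$, three classes $X$, $Y$, $Z$ with $|X|, |Y|$ small constants (roughly $3$, $2$, and the appropriate small adjustments to fix the residue) and $|Z|$ of size $\approx n/2$, and delete a perfect (or near-perfect) matching inside $Z$ together with a small fixed matching between $X$ and $Y$, exactly as in the $9$-vertex case.

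The key steps, in order, are: (i) give the explicit edge set and $2$-coloring of $F$ for each residue $r$, taking the $9$-vertex graph as the template for $r=1$ and adjusting the sizes of $X$, $Y$, or $Z$ by adding one or two vertices (and routing their incident edges to preserve the color pattern) to handle $r = 0, 2, 3$; (ii) verify that the complement of $E(F)$ is a matching of the right size so that $\delta(F) = (n-1) - 1 = n-2$ in the small cases, and more generally check that the minimum degree equals $\lceil (3n-3)/4 \rceil - 1$ once $|Z| \approx n/2$ is chosen correctly — this is just an arithmetic bookkeeping step keyed to the four residues; and (iii) prove that $F$ has no red-cycle/blue-cycle partition by a short case analysis: observe that $Z$ induces no monochromatic spanning structure on its own because of the deleted matching, so in any valid partition the cycle(s) covering $Z$ must use cross edges to $X$ or $Y$; then use that $X$--$Z$ edges are red while $Y$--$Z$ edges are blue, together with the coloring inside $X$ (all blue) and on the $X$--$Y$ edges, to show that the two monochromatic cycles cannot simultaneously cover $X$, $Y$, and $Z$.

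The main obstacle I anticipate is step (iii) in its general form: for the fixed $9$-vertex graph one can literally enumerate cases, but for a family parametrized by $n$ one needs a clean structural argument rather than brute force. The right way to phrase it is probably to argue about the ``entry/exit'' edges of each monochromatic cycle into the set $Z$: a red cycle meeting $Z$ must enter and leave $Z$ through $X$ (since the only red edges at $Z$ go to $X$, and $X$ is small), and similarly a blue cycle meeting $Z$ enters and leaves through $Y$; since $|X|$ and $|Y|$ are small fixed constants while $Z$ is large, one cycle of each color cannot cover all of $Z$, and the deleted matching inside $Z$ prevents a single long cycle of either color from closing up within $Z$ alone. Making this ``parity/connectivity inside $Z$'' argument precise — essentially showing the deleted matching disconnects $Z$ into pieces that cannot be stitched back into one cycle without passing through the small outer sets too many times — is the crux, and the residue adjustments must be made so that this obstruction survives. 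The remaining verifications (degree count, that $F$ is well-defined) are routine.
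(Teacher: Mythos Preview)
Your approach has a fundamental arithmetic flaw that prevents it from working for general $n$. You propose to keep $|X|$ and $|Y|$ as small fixed constants (roughly $3$ and $2$) and let $|Z|$ be large, deleting only a near-perfect matching inside $Z$ together with a small matching between $X$ and $Y$. But then the complement of $F$ has maximum degree $1$, so $\delta(F)\geq n-2$. For $n=4q$ this would need $n-2=3q-1$, which holds only when $q=1$; similar computations rule out all but a handful of tiny $n$ in the other residue classes. In short, the $9$-vertex graph is \emph{not} the template for this proposition: it witnesses a different (and higher) minimum-degree threshold, namely $\delta=(3n+1)/4$, and that coincidence is specific to $n=9$. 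It does not scale.

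The paper's construction is structurally different. It takes parts of size roughly $n/4$ each (not small constants), and the complement of $F$ is a complete bipartite graph between two of those parts (not a matching). Concretely, one example is a $4$-partition $X_1,X_2,Y_1,Y_2$ with all $X_1$--$Y_2$ and $X_2$--$Y_1$ edges removed and the crossing pairs colored so that red and blue each see a balanced bipartite pattern; another is a $3$-partition $X,Y,Z$ with $|X|,|Y|\approx n/4$, $|Z|\approx n/2$, all $X$--$Y$ edges removed, and the coloring arranged so that $X$ is reachable from $Z$ only in red and $Y$ only in blue. In these examples the obstruction to a red/blue cycle partition is a \emph{balance} or \emph{connectivity} constraint coming from the bipartite-like color pattern (a red cycle in $F_3$, for instance, must alternate between $X$ and $Z$, but $|Z|>|X|$), not a parity argument about a deleted matching. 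The minimum-degree verification is then just $n-1-|X_i|\approx 3n/4$, and the ``no partition'' check is a short direct argument rather than the entry/exit bookkeeping you outline.
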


\begin{proof}
\begin{figure}[ht]
\centering

\scalebox{1}{\input{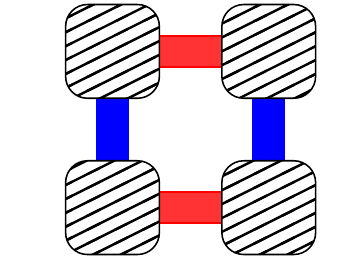_t}}~~~~~~~~~~~~~
\scalebox{1}{\input{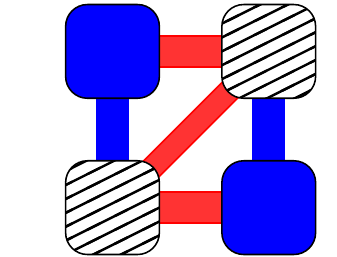_t}}~~~~~~~~~~~~~
\scalebox{1}{\input{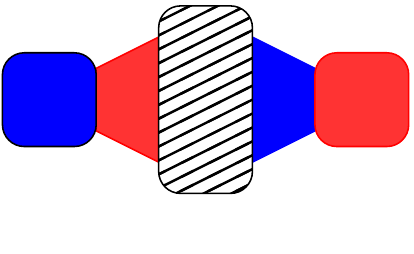_t}}

\caption[]{Three examples of graphs which have minimum degree $\ceiling{\frac{3n-3}{4}}-1$, and do not have a partition into a red cycle and a blue cycle.  The striped lines represent edges whose color has no effect on the example.}
\label{3examples}
\end{figure}

(See Figure \ref{3examples}) Let $n=4q+r$.  Let $\{X_1,X_2,Y_1,Y_2\}$ be a partition of a set with $n$ elements such that (i) $|X_1|+|X_2|+|Y_1|+|Y_2|=n$, (ii) $|X_1|\geq |Y_2| \geq |X_2|,|Y_1|$, and (iii) the difference between the sizes of any pair of sets is at most $1$.  

Let $F_1$ be the graph obtained from the complete graph on $X_1\cup X_2\cup Y_1\cup Y_2$ by deleting all edges between $X_1$ and $Y_2$ and all edges between $X_2$ and $Y_1$.  Now $2$-color the edges of $F_1$ so that all edges between $X_1$ and $X_2$ and between $Y_1$ and $Y_2$ are blue and all edges between $X_1$ and $Y_1$ and between $X_2$ and $Y_2$ are red, and all edges inside $X_1, X_2, Y_1, Y_2$ are arbitrarily colored.  

Let $F_2$ be the graph obtained from the complete graph on $X_1\cup X_2\cup Y_1\cup Y_2$ by deleting all edges between $X_1$ and $Y_2$.  Now $2$-color the edges of $F_2$ so that all edges between $X_1$ and $X_2$ and between $Y_1$ and $Y_2$ are blue, all edges between $X_1$ and $Y_1$ and between $X_2$ and $Y_1\cup Y_2$ are red, all edges inside $X_1$, $Y_2$ are colored blue, and all edges inside $X_2$, $Y_1$ are arbitrarily colored.  

Now let $\{X, Y, Z\}$ be a partition of a set on $n$ elements such that $|X|+|Y|+|Z|=n$,  $||X|-|Y||\leq 1$, and $1\leq |X|+|Y|-|Z|\leq 2$.  

Let $F_3$ be the graph obtained from the complete graph on $X\cup Y\cup Z$ obtained by deleting all edges between $X$ and $Y$.  Now color the edges inside $X$ blue, the edges inside $Y$ red, the edges inside $Z$ arbitrarily, the edges between $X$ and $Z$ red, and the edges between $Y$ and $Z$ blue.

Note that
\[
 \ceiling{\frac{3n-3}{4}}-1 =
  \begin{cases}
   3q-1; & n=4q \\
   3q-1; & n=4q+1 \\
   3q;    & n=4q+2\\ 
   3q+1; & n=4q+3
  \end{cases}
\]

If $n=4q$, then $\delta(F_1)=\delta(F_2)=3q-1$.
If $n=4q+1$, then $\delta(F_1)=\delta(F_2)=\delta(F_3)=3q-1$.
If $n=4q+2$, then $\delta(F_1)=\delta(F_2)=\delta(F_3)=3q$.  
If $n=4q+3$, then $\delta(F_1)=\delta(F_2)=\delta(F_3)=3q+1$.

One can easily check that none of $F_1$, $F_2$, and $F_3$ have a partition into a red cycle and a blue cycle.
\end{proof}

\begin{conjecture}\label{exactconjecture}
There exists $n_0$ such that if $G$ is a $2$-colored graph on $n\geq n_0$ vertices with $\delta(G)\geq \frac{3n-3}{4}$, then $G$ has a partition into a red cycle and a blue cycle.
\end{conjecture}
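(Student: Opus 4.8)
Toward Conjecture~\ref{exactconjecture}, the natural plan is to combine Theorem~\ref{mainapprox} with a stability analysis of near-extremal colorings. Fix $\gamma>0$ small enough for Theorem~\ref{mainapprox} and suppose, for contradiction, that $G$ is a $2$-colored graph on $n\ge n_0$ vertices with $\delta(G)\ge\frac{3n-3}{4}$ having no partition into a red cycle and a blue cycle. If $\delta(G)\ge(\tfrac34+\gamma)n$ then Theorem~\ref{mainapprox} supplies the partition, so we may assume $\frac{3n-3}{4}\le\delta(G)<(\tfrac34+\gamma)n$. The goal is then to prove that $G$ must look essentially like one of the sharpness graphs $F_1,F_2,F_3$ from Section~\ref{exact}, and to finish by a direct argument that uses the fact that $\delta(G)\ge\frac{3n-3}{4}$, i.e.\ $\delta(G)\ge\ceiling{\frac{3n-3}{4}}$, is exactly one more than the minimum degree of those examples.

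The first step is a stability lemma: if $G$ is $2$-colored with $\delta(G)\ge(\tfrac34-\gamma)n$ and $G$ has no partition into a red cycle and a blue cycle, then (for $n$ large) there is a partition $V(G)=A_1\cup A_2\cup B_1\cup B_2$ (some parts possibly empty) and an exceptional set $W$ with $|W|\le\gamma' n$ such that, after deleting or recoloring at most $\gamma' n^2$ edges, $G-W$ coincides with one of $F_1,F_2,F_3$ on $A_1\cup A_2\cup B_1\cup B_2$. I would prove this by the same machinery behind Theorem~\ref{mainapprox}: pass to the regularity reduced graph, use that it is covered by a bounded number of monochromatic robustly expanding subgraphs, and classify the (few) ways such a cover can fail to yield a red cycle and a blue cycle together covering all but $o(n)$ vertices; the resulting obstructions should be exactly the blow-ups of $F_1,F_2,F_3$. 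This case analysis should parallel the one already carried out for the approximate theorem, but retaining the structural description of the bad configurations rather than discarding it.

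The second step is the extremal analysis. Using $\delta(G)\ge\frac{3n-3}{4}$ and $|W|=o(n)$, every $w\in W$ has almost all of its neighbourhood inside $A_1\cup A_2\cup B_1\cup B_2$ and so can be reassigned to one of the parts so that its red and blue degrees behave like those of that part's other vertices; redistributing $W$ in this way produces a genuine near-extremal graph. Now the constraints become rigid: in the $F_1/F_2$-type structure the red cycle must (essentially) cover the bipartite-like pairs $X_1\cup Y_1$ and $X_2\cup Y_2$ while the blue cycle covers $X_1\cup X_2$ and $Y_1\cup Y_2$, and a long cycle through such a pair forces a near-balance of its two sides. One then checks that the single extra unit of minimum degree forces either a violation of the required balance (so that no partition can exist only when $G$ is literally one of the extremal examples, a contradiction) or the existence of one extra usable edge (a ``cross'' edge between the wrong pair of parts, or an edge of the needed colour inside a part) with which the two cycles can be rerouted to absorb every vertex. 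The same bookkeeping, with part sizes tracked modulo small constants, should handle the $X\cup Y\cup Z$ structure of $F_3$. The conclusion would be that a $2$-coloring with $\delta\ge\frac{3n-3}{4}$ and no red-cycle/blue-cycle partition must be exactly one of $F_1,F_2,F_3$ at degree $\ceiling{\frac{3n-3}{4}}-1$, which is impossible.

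The main obstacle is this extremal step: it requires showing that $F_1,F_2,F_3$ (at their stated degrees) are the \emph{only} obstructions, which is a delicate parity-and-size computation — for each structure one must determine, as a function of the part sizes, which colour can host the Hamilton-type cycle through each bipartite-like pair, and then verify that the edge guaranteed by the hypothesis $\delta(G)\ge\frac{3n-3}{4}$ is always of a type that permits rerouting. A secondary difficulty is extracting the stability lemma in a usable form from the absorption-based proof of Theorem~\ref{mainapprox}, since that argument is designed to produce almost-spanning cycles rather than a classification of the colorings for which it barely fails.
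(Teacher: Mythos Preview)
The statement you are trying to prove is a \emph{conjecture}, and the paper does not supply a proof of it. More importantly, the paper explicitly records (in the ``Note added in proof'' immediately following Conjecture~\ref{exactconjecture}) that Letzter~\cite{Let} gave examples showing that Conjecture~\ref{exactconjecture} is \emph{false}. So no proof along the lines you sketch can succeed: the stability step you describe, culminating in the claim that $F_1,F_2,F_3$ are the only obstructions at the threshold $\delta(G)\ge\frac{3n-3}{4}$, is precisely what fails. There exist $2$-colored graphs with $\delta(G)\ge\frac{3n-3}{4}$ (for arbitrarily large $n$) that are not close to any of $F_1,F_2,F_3$ and yet admit no partition into a red cycle and a blue cycle.

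Your outline is a perfectly reasonable template for attacking an exact minimum-degree threshold via ``approximate result $+$ stability $+$ extremal analysis'', and indeed Letzter's paper does follow this scheme to prove Conjecture~\ref{3/4conjecture} (the $\delta(G)>3n/4$ version) for large $n$. The concrete gap in your plan is the unjustified assertion that the sharpness constructions in Section~\ref{exact} exhaust the extremal behaviour; you flagged this as ``the main obstacle'', and it is in fact an insurmountable one, since further extremal families exist at $\delta(G)=\ceiling{\frac{3n-3}{4}}$ that defeat the rerouting argument you propose.
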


\textbf{Note added in proof:}  While this paper was under review, Letzter \cite{Let} proved Conjecture \ref{3/4conjecture} for sufficiently large $n$ and gave examples to show that Conjecture \ref{exactconjecture} is false.

\section{Outline of the proof of Theorem \ref{mainapprox}}

As with the proof in \cite{BBGGS} (and many earlier results starting with \cite{Lu} and \cite{LRS}), the idea is to prove that if $G$ is a $2$-colored graph, here with $\delta(G)>3n/4$, then one can find a partition of $G$ into a red matching and a blue matching such that the red matching is contained in a red component and the blue matching is contained in a blue component (i.e. a partition into a red connected matching and a blue connected matching).  Then using Szemer\'edi's regularity method, one can apply this result to a reduced graph to find a vertex disjoint red cycle and blue cycle which span most of the vertices.  In applications of this method where the host graph is complete, it is possible to show that the matchings satisfy certain stronger properties which allow one to insert the remaining vertices in an ad hoc way.  However, since $G$ is not complete, inserting the remaining vertices seems more difficult here.

Our idea is to use the absorbing method of R\"odl, Ruci\'nski, and Szemer\'edi (see \cite{RRSz} and \cite{LSSz}).  However, the Ramsey-type setting introduces some new challenges.  Before applying regularity, we must analyze the structure of the graph and show that $G_1$ and $G_2$ contain contain robust subgraphs, which in this context means they have sufficiently large minimum degree and are highly connected in some sense.  These robust subgraphs can be shown to have certain expansion properties (allowing for a notion of bipartite expansion) and are not sensitive to the deletion of a small number of vertices, which together will allow for absorbing.  Then, regularity is applied so that all clusters lie inside a rough initial partition 
Now proceeding as before, one can find two monochromatic cycles which miss only a small number of vertices and which mostly use edges from $G_1$ and $G_2$.  The absorbing structures allow the leftover vertices to be ``automatically" inserted into the cycles, thus completing the monochromatic cycle partition.

In Section \ref{sec:prelim} we introduce some preliminary lemmas, in Section \ref{sec:conabs} we prove that robust components have the connecting/absorbing property (the results of this section are independent of the edge-colored setting of this paper and can have other applications), in Section \ref{sec:structure} we prove structural results regarding robust components, in Section \ref{sec:proof} we prove a result about connected matchings and complete the proof, and finally in Section \ref{sec:conclusion} we make some concluding remarks.

\section{Preliminary material}\label{sec:prelim}

\begin{lemma}[Chv\'atal \cite{Ch}]\label{ChvatalDegree}
\begin{enumerate}
\item 
Let $G$ be a graph on $n\geq 3$ vertices and let $d_1\leq d_2\leq \dots\leq d_n$ be the degree sequence of $G$.  If for all $1\leq i<n/2$ we have $d_i\geq i+1$ or $d_{n-i}\geq n-i$, then $G$ contains a Hamiltonian cycle.
\item Let $G$ be a balanced $(U,V)$-bipartite graph on $2n \ge 4$ vertices.  Suppose that $\deg(u_1)\leq \deg(u_2)\leq \dots \leq \deg(u_n)$ and $\deg(v_1)\leq \deg(v_2)\leq \dots \leq \deg(v_n)$.  If $\deg(u_i)>i$ or $\deg(v_{n-i})> n-i$ for all $1\leq i<n$, then $G$ has a Hamiltonian cycle.
\end{enumerate}
\end{lemma}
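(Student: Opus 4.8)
The plan is to prove both parts by the classical Ore‑type extremal argument (or, alternatively, via the Bondy--Chv\'atal closure), running the two cases in parallel.

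For part (i), I would argue by contradiction: among all graphs on $n$ vertices that satisfy the degree hypothesis but are not Hamiltonian, let $G$ be one with the most edges. Since $K_n$ is Hamiltonian for $n\ge 3$ and adding edges can only help, $G$ is not complete, so I may pick non-adjacent $u,v$ with $\deg(u)+\deg(v)$ as large as possible; by maximality of $G$ the graph $G+uv$ has a Hamiltonian cycle, which must use $uv$, so $G$ has a Hamiltonian path $u=x_1,x_2,\dots,x_n=v$. The crux is the rotation/crossover step: if $x_1x_{i+1}\in E(G)$ and $x_ix_n\in E(G)$ for some $i$, then $x_1x_2\cdots x_ix_nx_{n-1}\cdots x_{i+1}x_1$ is a Hamiltonian cycle, a contradiction; hence $\{i: x_1x_{i+1}\in E(G)\}$ and $\{i: x_ix_n\in E(G)\}$ are disjoint subsets of $\{1,\dots,n-1\}$, so $\deg(u)+\deg(v)\le n-1$. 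Assuming $\deg(u)\le\deg(v)$ and setting $j=\deg(u)$ (so $1\le j<n/2$, the lower bound because an isolated vertex would itself contradict the hypothesis), the choice of $u,v$ forces every non-neighbor of $v$ to have degree at most $j$, and there are at least $n-1-\deg(v)\ge j$ of these, so $d_j\le j$; symmetrically every non-neighbor of $u$ has degree at most $\deg(v)\le n-1-j$, and together with $u$ itself this yields at least $n-j$ vertices of degree at most $n-1-j$, so $d_{n-j}\le n-1-j<n-j$. Taking $i=j$ contradicts the hypothesis.

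For part (ii) the structure is identical, but one must respect the bipartition. The extremal counterexample is a balanced $(U,V)$-bipartite graph that is not $K_{n,n}$, the maximizing non-adjacent pair consists of some $u\in U$ and $v\in V$, and the Hamiltonian path from $u$ to $v$ alternates sides: $u=x_1,y_1,x_2,y_2,\dots,x_n,y_n=v$ with $x_k\in U$ and $y_k\in V$. The crossover becomes: if $x_1y_m\in E$ and $x_my_n\in E$ for some $m$, then $x_1,y_1,\dots,x_m,y_n,x_n,y_{n-1},\dots,x_{m+1},y_m,x_1$ is a Hamiltonian cycle; hence $\{m:x_1y_m\in E\}$ and $\{m:x_my_n\in E\}$ are disjoint, giving $\deg(u)+\deg(v)\le n$. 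Writing $i=\deg(u)$, the at least $i$ non-neighbors of $v$ inside $U$ all have degree at most $i$, so $\deg(u_i)\le i$, while the exactly $n-i$ non-neighbors of $u$ inside $V$ all have degree at most $\deg(v)\le n-i$, so $\deg(v_{n-i})\le n-i$; this $i$ contradicts the hypothesis.

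I expect no conceptual obstacle here --- the statement is Chv\'atal's theorem \cite{Ch}, so one could simply quote it --- and the only care needed is bookkeeping: getting the index ranges and (in the bipartite case) the parities in the crossover exactly right so that the rerouted walk really is a spanning cycle, and disposing of the small or degenerate cases ($n\le 3$, resp.\ $n=2$, and ruling out $\deg(u)=0$) that the hypothesis is tailored to exclude.
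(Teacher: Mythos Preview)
Your proof is correct and follows the classical extremal/crossover argument due to Chv\'atal; the bookkeeping in both the ordinary and bipartite cases is handled properly. The paper itself gives no proof of this lemma---it is stated as a known result with a citation to \cite{Ch}---so there is nothing to compare against beyond noting that you have supplied a complete argument where the paper simply quotes the literature.
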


\subsection{Definitions and observations}

\begin{definition}[$\alpha$-sparse cut]
Let $0<\alpha$ and let $G$ be a graph on $n\geq n_0$ vertices. For disjoint $X,Y\subseteq V(G)$, we say $(X,Y)$ is an $\alpha$-sparse pair if $e(X,Y)<\alpha|X||Y|$. We say $G$ has an $\alpha$-sparse cut if there exists $X\subseteq V(G)$ such that $(X,V(G)\setminus X)$ is an $\alpha$-sparse pair.
\end{definition}

\begin{definition}[$(\eta, \alpha)$-robust]
Let $0<\alpha,\eta$ and let $G$ be a graph on $n$ vertices.  A subgraph $H\subseteq G$ is $(\eta,\alpha)$-robust if $\delta(H)\geq \eta n$ and $H$ has no $\alpha$-sparse cut.  We say $X\subseteq V(G)$ is $(\eta, \alpha)$-robust if $G[X]$ is $(\eta, \alpha)$-robust.

We say that $G$ has an $(\eta, \alpha)$-robust partition if there exists a partition $\{V_1, \dots, V_k\}$ of $V(G)$ such that $G[V_i]$ is $(\eta, \alpha)$-robust for all $i\in [k]$.
\end{definition}

The following simple observation basically says that if the minimum degree is at least $\eta n$, then any $\alpha$-sparse cut $\{X,Y\}$ must have both $|X|$ and $|Y|$ be sufficiently large.  

\begin{observation}\label{simplerobust}
Let $0<\alpha\leq \eta/2$, let $G$ be a graph on $n$ vertices, and let $\{X_1,X_2\}$ be a partition of $V(G)$ with $|X_1|\leq |X_2|$.  If $\delta(G)\geq \eta n$ and $|X_1|\leq \eta n/2$, then $e(X_1, X_2)\geq \frac{\eta}{2}|X_1||X_2|\geq \alpha|X_1||X_2|$.
\end{observation}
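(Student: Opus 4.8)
The plan is to prove the inequality vertex-by-vertex: show that \emph{every} vertex of $X_1$ already has at least $\frac{\eta}{2}|X_2|$ neighbors in $X_2$, and then sum this over $X_1$. First I would dispose of the degenerate case $X_1=\emptyset$, where $e(X_1,X_2)=0=\frac{\eta}{2}|X_1||X_2|$ and there is nothing to prove; so from now on assume $|X_1|\ge 1$.

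Fix a vertex $v\in X_1$. Since $G$ is loopless, $v$ has at most $|X_1|-1$ neighbors inside $X_1$, so
\[
\deg(v,X_2)=\deg(v)-\deg(v,X_1)\ \ge\ \eta n-(|X_1|-1)\ \ge\ \eta n-|X_1|.
\]
Now I would invoke the hypothesis $|X_1|\le \eta n/2$ to get $\deg(v,X_2)\ge \eta n-\eta n/2=\eta n/2$, and then use $|X_2|\le n$ to conclude $\deg(v,X_2)\ge \frac{\eta}{2}|X_2|$. Summing over all $v\in X_1$,
\[
e(X_1,X_2)=\sum_{v\in X_1}\deg(v,X_2)\ \ge\ |X_1|\cdot\tfrac{\eta}{2}|X_2|\ =\ \tfrac{\eta}{2}|X_1||X_2|,
\]
and finally the hypothesis $\alpha\le \eta/2$ yields $\frac{\eta}{2}|X_1||X_2|\ge \alpha|X_1||X_2|$, giving both inequalities in the statement.

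There is essentially no obstacle here; the only point requiring a moment's care is the off-by-one when bounding $\deg(v,X_1)$ by $|X_1|-1$ rather than $|X_1|$. This costs nothing, since the slack in $|X_1|\le \eta n/2$ is more than enough: we in fact only need $\eta n-|X_1|+1\ge \eta n/2$, i.e.\ $|X_1|\le \eta n/2+1$, which is weaker than what is assumed. Everything else is a direct degree count.
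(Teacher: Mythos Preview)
Your proof is correct and follows exactly the same approach as the paper's: the paper's one-line chain $e(X_1,X_2)\ge (\eta n-|X_1|)|X_1|\ge \frac{\eta}{2}n|X_1|\ge \frac{\eta}{2}|X_1||X_2|\ge \alpha|X_1||X_2|$ is precisely your per-vertex degree count summed over $X_1$, followed by the same three substitutions. Your version is just more explicit about the degenerate case and the off-by-one in $\deg(v,X_1)\le |X_1|-1$, neither of which the paper bothers to mention.
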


\begin{proof}
Suppose $\delta(G)\geq \eta n$ and $|X_1|\leq \eta n/2$.  By the minimum degree condition, we have $e(X_1, X_2)\geq (\eta n-|X_1|)|X_1|\geq \frac{\eta}{2}n|X_1|\geq \frac{\eta}{2}|X_1||X_2|\geq \alpha|X_1||X_2|$.
\end{proof}

The following two observations (using slightly different language) are proved in \cite{CFS}.

\begin{observation}[\cite{CFS} Lemma 6.1]\label{cleanedsparsecut}
Let $0<\alpha\leq \eta/2$ and let $G$ be a graph on $n$ vertices with $\delta(G)\geq \eta n$.  If $G$ has an $\alpha$-sparse cut, then there exists another partition $\{X_1, X_2\}$ of $V(G)$ such that $e(X_1, X_2)\leq \alpha n^2$ and $\delta(G[X_i])\geq (\eta-\frac{5\alpha}{\eta})|X_i|$ for $i\in [2]$. 

\end{observation}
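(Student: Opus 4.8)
The plan is to reprove this by a standard ``cleaning'' procedure: starting from the given sparse cut, repeatedly move to the opposite side any vertex that has too few neighbours inside its own side, and argue that the resulting sequence of partitions stabilises at one with the two required properties. Write $\gamma:=\eta-\tfrac{5\alpha}{\eta}$. If $\gamma\le 0$, the requirement $\delta(G[X_i])\ge\gamma|X_i|$ is vacuous, so one may simply take $(X_1,X_2)$ to be the given $\alpha$-sparse cut, for which $e(X_1,X_2)<\alpha|X_1||X_2|\le\alpha n^2$; so assume $\gamma>0$. Let $(A,B)$ be the $\alpha$-sparse cut, with $|A|\le|B|$. Since $(A,B)$ is an $\alpha$-sparse pair, Observation~\ref{simplerobust} rules out $|A|\le\eta n/2$, hence $\tfrac{\eta}{2}n<|A|\le\tfrac n2\le|B|<(1-\tfrac{\eta}{2})n$ and $e(A,B)<\alpha|A||B|\le\tfrac14\alpha n^2$.

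Run the process from $(X_1,X_2)=(A,B)$: while some side $X_i$ has a vertex $v$ with $\deg(v,X_i)<\gamma|X_i|$, move $v$ to $X_{3-i}$, preferring a vertex in the smaller side. The two quantities I would control are $e(X_1,X_2)$ and $\min(|X_1|,|X_2|)$. Moving $v$ out of $X_i$ changes $e(X_1,X_2)$ by $\deg(v,X_i)-\deg(v,X_{3-i})$; since $\deg(v,X_{3-i})=\deg(v)-\deg(v,X_i)\ge\eta n-\deg(v,X_i)$, this change is at most $2\deg(v,X_i)-\eta n<2\gamma|X_i|-\eta n$, which is negative whenever $|X_i|\le\tfrac{\eta n}{2\gamma}$. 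As $\tfrac{\eta n}{2\gamma}>\tfrac n2$, moving a bad vertex out of the smaller side always strictly decreases $e(X_1,X_2)$---indeed by more than $\tfrac{5\alpha}{\eta}n$---so at most $\tfrac{\eta n}{20}$ such moves can happen, and throughout them $e(X_1,X_2)$ stays below $\tfrac14\alpha n^2$ and each side keeps size at least $\tfrac{9\eta}{20}n$. The only other possibility is that the smaller side is clean while the larger side $X_j$ still has a bad vertex; then every bad vertex of $X_j$ has $\ge\eta n-\gamma|X_j|$ neighbours outside $X_j$, which (using $|X_j|\le(1-\tfrac{9\eta}{20})n$) is a positive fraction of $n$, so by the edge bound $X_j$ has only $O(\alpha n/\eta^{2})$ bad vertices; moving these across, interleaved with further cleaning of the current smaller side, keeps $e(X_1,X_2)<\alpha n^2$ and both sides of linear size. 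When no vertex is bad the process halts, and then $\delta(G[X_i])\ge\gamma|X_i|$ for $i\in[2]$ and $e(X_1,X_2)<\alpha n^2$, as required. (One could instead clean $A$ and $B$ separately to obtain $X_1\subseteq A$, $X_2\subseteq B$ with $\delta(G[X_i])\ge\gamma|X_i|$ and only $O(\alpha n/\eta)$ leftover vertices, then redistribute the leftovers one by one---no leftover vertex can be ``bad'' with respect to both sides simultaneously, since together that would force it to have fewer than $\gamma n<\eta n$ neighbours in $V$.)

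The step I expect to be the real obstacle is precisely this last one in either formulation: showing that repairing the larger side, or redistributing the leftover vertices, never pushes $e(X_1,X_2)$ past $\alpha n^2$ nor shrinks a side below a linear size, and that the process actually terminates, given that these repairs can themselves create new bad vertices. This is a constant-chasing argument, and it is the reason the bound carries the factor $5$ rather than something smaller: that slack has to absorb simultaneously the loss from the initial $\alpha$-sparse cut, the internal-degree deficit carried by each moved vertex, and the drift of the part sizes over the course of the process. Failing a clean self-contained argument one can of course just invoke~\cite{CFS} (Lemma~6.1), as the authors note.
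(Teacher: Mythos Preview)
The paper does not give its own proof of this observation: it is simply quoted from \cite{CFS}, so there is nothing to compare your attempt against except the general shape of the standard argument. Your cleaning procedure is indeed that standard shape, and your Phase~1 analysis (moving bad vertices out of the \emph{smaller} side) is correct and cleanly quantified: each such move lowers $e(X_1,X_2)$ by more than $\tfrac{5\alpha}{\eta}n$, so there are at most $\eta n/20$ of them, $e(X_1,X_2)$ stays below $\alpha n^2/4$, and the smaller side keeps size at least $9\eta n/20$.

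Where your write-up stops being a proof is exactly where you say it does. When a bad vertex must be moved out of the \emph{larger} side $X_j$, the change in $e(X_1,X_2)$ is at most $2\gamma|X_j|-\eta n$, and with $|X_j|$ as large as $(1-\tfrac{9\eta}{20})n$ this is positive and of order $\eta n$. Your count of bad vertices in $X_j$ is $O(\alpha n/\eta^{2})$, so the naive bound on the total increase is of order $\alpha n^{2}/\eta$, which for small $\eta$ exceeds the target $\alpha n^{2}$. Thus the sentence ``moving these across \ldots\ keeps $e(X_1,X_2)<\alpha n^2$'' is precisely the unproved claim. Your parenthetical alternative (clean $A$ and $B$ separately, then redistribute leftovers) has the same defect: inserting a leftover into $X_i$ may destroy the bound $\delta(G[X_i])\ge\gamma|X_i|$ for existing vertices, and you give no control on the resulting cascade. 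You are candid about this, but a proof proposal that ends with ``failing a clean self-contained argument one can just invoke \cite{CFS}'' is not a proof---it is the same citation the paper already makes. If you want a self-contained argument you must actually carry out the bookkeeping (e.g.\ via a potential that charges the larger-side moves against the guaranteed decrease from the smaller-side moves they trigger), not merely announce that the constant $5$ is there to absorb it.
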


\begin{observation}[\cite{CFS} Lemma 6.2]\label{robustpartition}
Let $0<\alpha\leq \eta^3/80$ and let $G$ be a graph on $n$ vertices. If $\delta(G)\geq \eta n$, then there exists a partition $\{V_1,\dots, V_k\}$ of $V(G)$ such that for all $i\in [k]$ we have that $|V_i|\geq \eta n/2$ (which implies $k\leq \frac{2}{\eta}$), and $G[V_i]$ has no $\alpha$-sparse cuts, $\delta(G[V_i])\geq \eta |V_i|/2$.
In particular, $G$ has an $(\eta^2/4, \alpha)$-robust partition.
\end{observation}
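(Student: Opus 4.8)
The plan is to build the partition by repeatedly cutting parts along ``cleaned'' sparse cuts. Start from the trivial partition $\{V(G)\}$. Given a current partition, stop if $G[W]$ has no $\alpha$-sparse cut for every part $W$; otherwise choose a part $W$ for which $G[W]$ has an $\alpha$-sparse cut, apply Observation~\ref{cleanedsparsecut} to $G[W]$, and replace $W$ by the two parts $W_1,W_2$ it produces. The aim is to maintain two invariants: (a) every part $W$ satisfies $|W|\ge \eta n/2$; and (b) a part $W$ at depth $d$ of the resulting refinement tree satisfies $\delta(G[W])\ge \big(\eta-\tfrac{10\alpha}{\eta}d\big)|W|$. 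Invariant (a) bounds the number of parts at every stage by $2/\eta$, hence bounds the number of splits and the depth of the refinement tree by $2/\eta$, so the process terminates with a controlled structure.

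Granting (a), invariant (b) is routine bookkeeping. Applying Observation~\ref{cleanedsparsecut} to a part $W$ with $\delta(G[W])\ge \eta_W|W|$ yields $W_1,W_2$ with $\delta(G[W_i])\ge \big(\eta_W-\tfrac{5\alpha}{\eta_W}\big)|W_i|$; since by (b) and the depth bound $\eta_W\ge \eta-\tfrac{10\alpha}{\eta}\cdot\tfrac{2}{\eta}=\eta-\tfrac{20\alpha}{\eta^2}\ge \tfrac{3\eta}{4}$ (using $\alpha\le \eta^3/80$), one split costs at most $\tfrac{5\alpha}{\eta_W}\le \tfrac{10\alpha}{\eta}$ in the minimum-degree ratio, which is exactly (b). Hence every final part $V_i$ has $\delta(G[V_i])\ge \tfrac{3\eta}{4}|V_i|\ge \tfrac{\eta}{2}|V_i|$, and combined with (a), $\delta(G[V_i])\ge \tfrac{\eta}{2}\cdot\tfrac{\eta n}{2}=\tfrac{\eta^2}{4}n$; as no $G[V_i]$ has an $\alpha$-sparse cut, $\{V_1,\dots,V_k\}$ is an $(\eta^2/4,\alpha)$-robust partition with $k\le 2/\eta$.

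The real work is invariant (a): a cleaned cut need not split a part into two large pieces. By Observation~\ref{simplerobust} applied to $G[W]$, \emph{every} $\alpha$-sparse cut of $G[W]$ is roughly balanced, with both sides exceeding $\tfrac{\eta_W}{2}|W|$; but Observation~\ref{cleanedsparsecut} only guarantees $e_{G[W]}(W_1,W_2)\le \alpha|W|^2$, which together with $\delta(G[W])\ge \eta_W|W|$ forces each $|W_i|$ to be \emph{either} at most $\tfrac{2\alpha}{\eta_W}|W|\le \tfrac{\eta^2}{20}|W|$ \emph{or} at least $\tfrac{\eta_W}{2}|W|$ --- and the small alternative is not automatically ruled out. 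This is the step I expect to be the main obstacle, and I see two ways to handle it. The first is to reprove Observation~\ref{cleanedsparsecut} starting from a balanced $\alpha$-sparse cut, tracking that each vertex relocation changes the number of crossing edges by a definite amount, so that at most $O(\alpha n/\eta)$ vertices are ever relocated --- too few, given $\alpha\le\eta^3/80$, to drop either side below $\eta n/2$, so the small alternative never occurs. The second, which does not strengthen Observation~\ref{cleanedsparsecut}, is to tolerate small remnants during the splitting phase and redistribute them at the end: their union $R$ has $|R|\le \tfrac{2}{\eta}\cdot\tfrac{\eta^2}{20}n=\tfrac{\eta n}{10}$, each $v\in R$ has at least $\eta n-|R|$ neighbours outside $R$, and a short averaging argument (using $k\le 2/\eta$) places each such $v$ into a large part $V_j$ where it already has at least $\tfrac{\eta}{2}(|V_j|+|R|)\ge \tfrac{\eta}{2}|V_j\cup\{v\}|$ neighbours, so that after absorption invariants (a) and (b) still hold. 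The delicate point of this second route is re-checking that the enlarged parts have no $\alpha$-sparse cut --- using that the absorbed vertices are too well connected into $V_j$ to lie on the light side of a putative cut --- and I would expect this verification to require the most care.
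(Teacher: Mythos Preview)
The paper does not give a proof of this observation; it is quoted verbatim as Lemma~6.2 of~\cite{CFS}. Your plan of iterating Observation~\ref{cleanedsparsecut} and tracking a relative-degree invariant is exactly the approach used there, and you have put your finger on the genuine difficulty: the size invariant~(a).

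There is, however, a gap in your Fix~1 as written. What Fix~1 actually yields is this: when you split a part $W$, the initial $\alpha$-sparse cut of $G[W]$ has both sides of size at least $\eta_W|W|/2$ (by Observation~\ref{simplerobust} applied inside $W$), and tracking relocations shows only $O(\alpha|W|/\eta_W)$ vertices move during cleaning, so each child $W_i$ satisfies $|W_i|\gtrsim \eta_W|W|/2$. This does rule out the ``small alternative'' in your dichotomy. But $\eta_W|W|/2$ is not $\eta n/2$; after $d$ nested splits the bound becomes roughly $(\eta/2)^d n$, which drops below $\eta n/2$ as soon as $d\ge 2$. Your sentence ``too few \ldots\ to drop either side below $\eta n/2$'' silently replaces $|W|,\eta_W$ by $n,\eta$, which is only valid at the root. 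Since you then use~(a) to bound the depth, and the depth bound to establish~(b), the argument is circular at this point.

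The repair is not to track relocations more carefully but to decouple the termination bound from~(a). Because each split degrades the relative minimum-degree ratio by at most $10\alpha/\eta$, the process must halt before that ratio reaches $\eta/2$, i.e.\ after at most $\eta^2/(20\alpha)\ge 4/\eta$ splits in total; this caps the number of parts without assuming~(a). With at most $O(1/\eta)$ parts, the size lower bound $|V_i|\ge \eta n/2$ is then recovered \emph{a posteriori} by a short counting/merging argument in the spirit of your Fix~2. So Fix~2 is closer to what is actually needed, and the ``delicate point'' you flag --- checking that absorption of small remnants does not create a new $\alpha$-sparse cut --- is indeed where the work lies.
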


\begin{observation}\label{slicerobust}
Let $0<\alpha\leq \eta/2$ and let $G$ be a graph on $n$ vertices. If $G$ is $(\eta, \alpha)$-robust and $Z\subseteq V(G)$ with $|Z|\leq \frac{\alpha\eta}{8} n$, then $G-Z$ is $(\eta/2, \alpha/2)$-robust.
\end{observation}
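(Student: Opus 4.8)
The plan is to verify the two defining properties of $(\eta/2,\alpha/2)$-robustness for $G-Z$ separately: first the minimum degree bound $\delta(G-Z)\geq (\eta/2)|V(G-Z)|$, and then the absence of an $(\alpha/2)$-sparse cut. Write $n'=|V(G-Z)|=n-|Z|$, and note that $|Z|\le \frac{\alpha\eta}{8}n \le n/16$ (using $\alpha,\eta\le 1$ implicitly, or just that $\alpha\le\eta/2\le 1/2$), so $n'\ge \frac{15}{16}n$.

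For the minimum degree, every vertex $v\in V(G-Z)$ has $\deg_G(v)\ge \eta n$, and deleting $Z$ removes at most $|Z|\le \frac{\alpha\eta}{8}n$ of its neighbors, so $\deg_{G-Z}(v)\ge \eta n - \frac{\alpha\eta}{8}n = \eta n(1-\frac{\alpha}{8})$. Since $n\ge n'$, it suffices to observe $\eta(1-\frac{\alpha}{8})\ge \eta/2$, which holds comfortably as $\alpha\le\eta/2<1$. Thus $\delta(G-Z)\ge (\eta/2)n\ge (\eta/2)n'$.

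For the sparse-cut property, suppose toward a contradiction that $G-Z$ has an $(\alpha/2)$-sparse cut, i.e.\ a partition $\{Y_1,Y_2\}$ of $V(G)\setminus Z$ with $e_{G-Z}(Y_1,Y_2)<\frac{\alpha}{2}|Y_1||Y_2|$. We want to extend this to an $\alpha$-sparse cut of $G$ by appending $Z$ to the smaller side. By symmetry assume $|Y_1|\le |Y_2|$. Here I would split into two cases. If $|Y_1|$ is tiny — say $|Y_1|\le \eta n/4$ — then since $G$ is $(\eta,\alpha)$-robust it has minimum degree $\ge\eta n$, and I can invoke the argument of Observation~\ref{simplerobust} applied to $G$ with the partition $\{Y_1, V(G)\setminus Y_1\}$: each vertex of $Y_1$ has at least $\eta n - |Y_1| \ge \eta n/2$ neighbors in $V(G)\setminus Y_1$, forcing a dense cut, contradicting that $G$ has no $\alpha$-sparse cut once we check $\eta/2 \ge \alpha$ (true) — so in fact $Y_1$ cannot be that small while $(Y_1, Y_2)$ is sparse in $G-Z$ either, giving a contradiction directly. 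Otherwise $|Y_1|>\eta n/4$, and I set $Y_1' = Y_1\cup Z$, $Y_2'=Y_2$, a partition of $V(G)$. Then
\[
e_G(Y_1',Y_2') \le e_{G-Z}(Y_1,Y_2) + e_G(Z, Y_2) < \tfrac{\alpha}{2}|Y_1||Y_2| + |Z|\,|Y_2|.
\]
The first term is at most $\frac{\alpha}{2}|Y_1'||Y_2'|$. For the second, $|Z|\le\frac{\alpha\eta}{8}n \le \frac{\alpha}{2}\cdot\frac{\eta n}{4} < \frac{\alpha}{2}|Y_1| \le \frac{\alpha}{2}|Y_1'|$, so $|Z|\,|Y_2| < \frac{\alpha}{2}|Y_1'||Y_2'|$. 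Adding, $e_G(Y_1',Y_2') < \alpha|Y_1'||Y_2'|$, so $(Y_1',Y_2')$ is an $\alpha$-sparse cut of $G$, contradicting that $G$ is $(\eta,\alpha)$-robust.

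The only mildly delicate point is handling the case where one side of the $G-Z$ cut is too small to absorb $Z$ without blowing up the bound; that is exactly what the minimum-degree hypothesis on $G$ (via the Observation~\ref{simplerobust} estimate) is there to rule out, so the constant $\frac{\alpha\eta}{8}$ in the hypothesis is chosen precisely to make the inequality $|Z|\,|Y_2| < \frac{\alpha}{2}|Y_1'||Y_2'|$ go through once $|Y_1|\ge\eta n/4$. I expect no real obstacle here — it is a routine robustness-is-preserved-under-small-perturbation argument — but care is needed to pick the case threshold ($\eta n/4$) consistently with both the ``small side'' contradiction and the ``absorb $Z$'' computation.
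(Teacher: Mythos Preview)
Your approach is the same as the paper's: check the minimum-degree bound directly, then assume an $(\alpha/2)$-sparse cut $\{Y_1,Y_2\}$ of $G-Z$, use Observation~\ref{simplerobust} to force $|Y_1|\ge\eta n/4$, and finally show $(Y_1\cup Z,Y_2)$ is an $\alpha$-sparse cut of $G$.

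One passage is garbled, though. In your small-$|Y_1|$ case you apply Observation~\ref{simplerobust} to $G$ with the partition $\{Y_1,V(G)\setminus Y_1\}$ and conclude that this cut is dense, then say this ``contradict[s] that $G$ has no $\alpha$-sparse cut.'' That is not a contradiction: a dense cut is perfectly compatible with $G$ being robust. The right move (and what the paper does) is to apply Observation~\ref{simplerobust} to $G-Z$ itself, using the already-established $\delta(G-Z)\ge(\eta/2)n$: if $|Y_1|\le\eta n/4$, every vertex of $Y_1$ has at least $\eta n/4$ neighbours in $Y_2$ within $G-Z$, so $e_{G-Z}(Y_1,Y_2)\ge\frac{\eta}{4}|Y_1||Y_2|\ge\frac{\alpha}{2}|Y_1||Y_2|$, directly contradicting the assumed sparsity of $(Y_1,Y_2)$ in $G-Z$. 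Your final clause (``so in fact $Y_1$ cannot be that small while $(Y_1,Y_2)$ is sparse in $G-Z$ either'') is the correct conclusion, but it does not follow from the argument you wrote above it; replace the detour through $G$ with the one-line computation in $G-Z$.
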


\begin{proof}
The minimum degree condition follows immediately since $|Z|\leq\frac{\alpha\eta}{8} n\leq  \eta n/2$.  Suppose there is a partition $\{X_1, X_2\}$ of $V(G)\setminus Z$ with $|X_1|\leq |X_2|$ such that $e(X_1, X_2)<\frac{\alpha}{2}|X_1||X_2|$.  Note that this implies $|X_1|\geq \eta n/4$ as otherwise by Observation \ref{simplerobust} we would have $e(X_1, X_2)\geq \frac{\alpha}{2}|X_1||X_2|$.  So we have 
\begin{align*}
e(X_1\cup Z, X_2)= e(X_1, X_2)+e(Z, X_2)< \frac{\alpha}{2}|X_1||X_2|+ \frac{\alpha\eta}{8} n|X_2|&=\alpha(\frac{|X_1|}{2}+\frac{\eta/4}{2} n)|X_2|\\
&\leq \alpha|X_1\cup Z||X_2|.
\end{align*}
which implies that $(X_1\cup Z, X_2)$ is an $\alpha$-sparse cut in $G$, contradicting the original assumption. 
\end{proof}

\begin{definition}[$\eta'$-maximal extension]
Let $0<\alpha,\eta',\eta$, let $G$ be a graph on $n$ vertices, and let $H^0\subseteq G$ such that $H^0$ is $(\eta, \alpha)$-robust.  Consider the following process: for $i\geq 1$, if there exists $v_i\in V(G)\setminus V(H^{i-1})$ with $\deg(v_i, H^{i-1})\geq \eta'n$, let $H^i:=G[V(H^{i-1})\cup \{v_i\}]$; if not, set $k:=i-1$.  We call $H^k$ an $\eta'$-maximal extension of $H^0$.
\end{definition}

\begin{observation}\label{augmentrobust}
Let $0<\alpha\leq \eta'/2\leq \eta/2$ and let $G$ be a graph on $n$ vertices.  If there exists $H^0\subseteq G$ such that $H^0$ is $(\eta, \alpha)$-robust and $H^k$ is an $\eta'$-maximal extension of $H^0$, then $H^k$ is $(\eta', \alpha\eta'\tau)$-robust where $\tau:=\frac{|V(H^0)|}{|V(H^k)|}$.
\end{observation}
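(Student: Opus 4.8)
The plan is to verify the two defining properties of $(\eta', \alpha\eta'\tau)$-robustness for $H^k$ separately: the minimum degree bound, and the absence of an $\alpha\eta'\tau$-sparse cut.

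\textbf{Minimum degree.} First I would check $\delta(H^k) \ge \eta' n$. Vertices of $V(H^0)$ had degree at least $\eta n \ge \eta' n$ inside $H^0 \subseteq H^k$, so they are fine. Each vertex $v_i$ added in the extension process was added precisely because $\deg(v_i, H^{i-1}) \ge \eta' n$, and since $H^{i-1} \subseteq H^k$, we get $\deg(v_i, H^k) \ge \eta' n$. So every vertex of $H^k$ has degree at least $\eta' n$ in $H^k$; note this uses $n$ (the order of $G$), which is what the definition of $(\eta',\cdot)$-robust wants. This step is routine.

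\textbf{No sparse cut.} This is the main obstacle. Suppose for contradiction that $\{X_1, X_2\}$ is a partition of $V(H^k)$ with $|X_1| \le |X_2|$ and $e(X_1, X_2) < \alpha\eta'\tau |X_1||X_2|$. The idea is to pull this back to a sparse-type cut of $H^0$. Restricting to $H^0$, consider $X_i' := X_i \cap V(H^0)$. Since $V(H^0)$ has size $\tau |V(H^k)|$ and $|X_2| \le |V(H^k)|$, one should be able to argue that $e_{H^0}(X_1', X_2') \le e_{H^k}(X_1, X_2) < \alpha\eta'\tau|X_1||X_2| \le \alpha\eta'|X_1||V(H^0)|$; combined with a lower bound $|X_1'| \gtrsim |X_1|$ or a careful comparison of sizes, this should contradict that $H^0$ has no $\alpha$-sparse cut — provided neither $X_1'$ nor $X_2'$ is too small. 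The potential degenerate case is that $X_1 \cap V(H^0)$ is empty or tiny, i.e. one side of the cut lies almost entirely among the extension vertices $v_1, \dots, v_k$. To rule this out I would use the minimum degree: $H^k$ is $(\eta', \cdot)$-robust with $\alpha\eta'\tau \le \eta'/2$ (which follows from $\alpha \le \eta'/2$ and $\tau \le 1$), so Observation~\ref{simplerobust} applied to $H^k$ forces $|X_1| \ge \eta' |V(H^k)| / 2$, i.e. the smaller side is already a constant fraction of $H^k$. Then a counting argument — each extension vertex has at least $\eta' n$ neighbors in $H^k$, so it cannot have all but a sparse-cut's worth of them on the other side unless the corresponding $H^0$-side is substantial — should show that the trace of the cut on $H^0$ remains non-trivial and genuinely sparse, giving the contradiction.

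\textbf{Assembly.} Putting it together: assume a too-sparse cut of $H^k$ exists, use Observation~\ref{simplerobust} to conclude both sides are large (each at least $\eta'|V(H^k)|/2$), then transfer the cut to $H^0$ by intersecting with $V(H^0)$, bounding edges from above and side-sizes from below so that the resulting pair is $\alpha$-sparse in $H^0$, contradicting the robustness of $H^0$. I expect the sharpest point to be the bookkeeping that converts the factor $\alpha\eta'\tau$ into exactly $\alpha$ after passing to $H^0$: the $\tau$ accounts for the shrinkage $|V(H^0)|/|V(H^k)|$ in the side sizes, and the $\eta'$ gives the slack needed to absorb the loss from vertices of $X_i$ lying outside $V(H^0)$, using that those vertices send $\ge \eta' n$ edges into $H^k$. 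I would keep all inequalities in terms of $|V(H^k)|$ and $\tau$ and only at the end compare with $\alpha |X_1'||X_2'|$.
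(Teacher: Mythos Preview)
Your overall architecture matches the paper's proof: verify the minimum degree directly, then argue by contradiction on a hypothetical sparse cut $\{Y_1,Y_2\}$ of $H^k$, invoke Observation~\ref{simplerobust} to make both sides large, pass to the traces $X_j' = Y_j \cap V(H^0)$, and split according to whether the smaller trace is large or small. When both traces have size $>\eta' n/2$, your plan to compare $e(X_1',X_2') \le e(Y_1,Y_2) < \alpha\eta'\tau|Y_1||Y_2|$ against $\alpha|X_1'||X_2'|$ works, since one can check $|X_1'||X_2'| > \eta'\tau|Y_1||Y_2|$ in that range; this is exactly what the paper does.

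The gap is in the degenerate case where the smaller trace $X_1'$ is tiny (say $|X_1'|\le \eta' n/2$). Your proposed handle --- ``each extension vertex has at least $\eta' n$ neighbours in $H^k$'' --- is not strong enough by itself: an extension vertex $v\in Y_1$ could have all of its $\eta' n$ neighbours among \emph{other extension vertices in $Y_1$}, so you cannot conclude it sends many edges to $Y_2$. What the paper exploits instead is the \emph{order} of the extension process: when $v_i$ was added it had $\ge \eta' n$ neighbours in $H^{i-1}$, not just in $H^k$. Hence the first $\lceil \eta' n/2\rceil - |X_1'|$ extension vertices that fall into $Y_1$ were each added at a moment when at most $\eta' n/2$ vertices of $Y_1$ were present in the current graph, so each of them sends at least $\eta' n/2$ edges to $Y_2$. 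Together with the edges from $X_1'$ itself (each vertex of $X_1'$ has $\ge \eta n - |X_1'| \ge \eta' n/2$ neighbours in $X_2'\subseteq Y_2$), this yields $e(Y_1,Y_2)\ge \eta'^2 n^2/4$, which exceeds $\alpha\eta'\tau|Y_1||Y_2|$ and gives the contradiction directly --- without ever appealing to the robustness of $H^0$ in this case. Your write-up should make this order-based counting explicit; ``neighbours in $H^k$'' alone will not close the argument.
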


\begin{proof}
The minimum degree condition follows immediately from the definition.  

Set $n_0:=|V(H^0)|$, in which case we can write $\tau=\frac{n_0}{n_0+k}\geq \frac{\eta n}{n}=\eta$.  Let $\{Y_1, Y_2\}$ be a partition of $V(H^k)$ 
such that 
\begin{equation}\label{aeb}
e(Y_1, Y_2)<\alpha\eta'\tau|Y_1||Y_2|\leq \alpha\eta'\tau n^2/4.
\end{equation}
For all $j\in [2]$, set $X_j:=Y_j\cap V(H^0)$; without loss of generality, suppose $|X_1|\leq |X_2|$.  
Since $\delta(H^k)\geq \eta'n$, Observation \ref{simplerobust} implies that $|Y_1|\geq \eta'n/2$.

If $|X_1|\leq \eta'n/2$, then each vertex in $X_1$ has at least $\eta'n/2$ neighbors in $X_2$ and the first $\ceiling{\eta'n/2}-|X_1|$ vertices which are added to $Y_1$ in the process each have at least $\eta'n/2$ neighbors in $Y_2$. So 
\begin{align*}
e(Y_1, Y_2)\geq e(X_1, X_2)+(\eta'n/2-|X_1|)\eta'n/2\geq |X_1|\eta'n/2+(\eta'n/2-|X_1|)\eta'n/2
=\eta'^2n^2/4,
\end{align*}
contradicting \eqref{aeb}.

So suppose $|X_1|> \eta'n/2$, which implies that 
\begin{equation}\label{X1X2size}
4|X_1||X_2|>4\frac{\eta'n}{2}(n_0-\frac{\eta'n}{2})\geq 2\eta'(1-\frac{\eta'n}{2n_0})nn_0\geq 2\eta'(1-\frac{\eta'}{2\eta})nn_0\geq \eta'nn_0. 
\end{equation}

Since $H^0$ is $(\eta, \alpha)$-robust, we have $e(X_1, X_2)\geq \alpha |X_1||X_2|$ and by \eqref{X1X2size} and the fact that $|Y_1||Y_2|\leq \left(\frac{n_0+k}{2}\right)^2$, we have 
\begin{align*}
e(Y_1, Y_2)\geq e(X_1, X_2)\geq \alpha\frac{|X_1|}{|Y_1|}\frac{|X_2|}{|Y_2|}|Y_1||Y_2|\geq \alpha \frac{4|X_1||X_2|}{(n_0+k)^2}|Y_1||Y_2|&\geq \alpha\frac{\eta'nn_0}{(n_0+k)^2}|Y_1||Y_2|\\
&\geq \alpha\eta'\tau|Y_1||Y_2|,
\end{align*}
which contradicts \eqref{aeb}.
\end{proof}

\begin{definition}[$\beta$-near-bipartite]
Let $0<\beta,\eta$ and let $G$ be a graph on $n$ vertices.  We say $G$ is $\beta$-near-bipartite if there exists $X\subseteq V(G)$ such that $e(X)<\beta n^2$ and $e(V(G)\setminus X)<\beta n^2$.  If in addition to this we have $\delta(X, V(G)\setminus X)\geq \eta n$ and $\delta(V(G)\setminus X, X)\geq \eta n$, then we say $G$ has a $(\eta, \beta)$-bipartition.
\end{definition}

\begin{observation}\label{bipartiterobust} 
Let $0<\alpha\leq \eta/2$ and $0<\beta\leq \alpha^{3/2}/9$, and let $G$ be a graph on $n$ vertices.  If $G$ is $(\eta, \alpha)$-robust and $\beta^2$-near-bipartite, then $G$ has an $(\eta/2, \beta)$-bipartition $\{S_1, S_2\}$ such that $H:=G[S_1,S_2]$ is $(\eta/2, \alpha/2)$-robust.  
\end{observation}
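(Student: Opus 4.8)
The plan is to turn the near-bipartition supplied by the hypothesis into the required $(\eta/2,\beta)$-bipartition $\{S_1,S_2\}$ via a cleaning step, and then to show directly that $H=G[S_1,S_2]$ has no $\alpha/2$-sparse cut (its minimum degree bound falls out of the cleaning). For the cleaning, let $X_0$ witness that $G$ is $\beta^2$-near-bipartite, so $e(X_0)<\beta^2n^2$ and $e(V(G)\setminus X_0)<\beta^2n^2$. I would repeatedly move a vertex to the opposite side of the current partition whenever it has fewer than $\eta n/2$ neighbours across, and let $\{S_1,S_2\}$ be the partition at termination. The key observation is that $\Phi:=e(S_1')+e(S_2')$, evaluated on the current partition $\{S_1',S_2'\}$, strictly decreases at each move: by $\delta(G)\ge\eta n$, moving a vertex $v$ deletes its $\deg_G(v)-\deg(v,\text{across})>\eta n-\eta n/2$ internal edges while creating only $\deg(v,\text{across})<\eta n/2$ new ones, so $\Phi$ drops by at least one. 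Hence the process terminates; at termination $\delta(S_1,S_2)\ge\eta n/2$ and $\delta(S_2,S_1)\ge\eta n/2$ by construction, and $e(S_1)+e(S_2)\le\Phi\le2\beta^2n^2<\beta n^2$ because $\Phi$ never increases. Neither part is ever nearly all of $V(G)$ (such a part would contain far more than $2\beta^2n^2$ internal edges, again by $\delta(G)\ge\eta n$), so both are nonempty. Thus $\{S_1,S_2\}$ is an $(\eta/2,\beta)$-bipartition, and since $\deg_H(v)=\deg(v,S_{3-i})\ge\eta n/2$ for each $v\in S_i$, also $\delta(H)\ge\eta n/2$.

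It then remains to check that $H$ has no $\alpha/2$-sparse cut. Suppose $\{A,B\}$ is one, with $A,B\neq\emptyset$ (as a cut must have), $|A|\le|B|$ (so $|B|\ge n/2$), and $e_H(A,B)<\tfrac{\alpha}{2}|A||B|$. Sorting the $G$-edges between $A$ and $B$ by which of $S_1,S_2$ their endpoints lie in gives $e(A,B)=e_H(A,B)+e(A\cap S_1,B\cap S_1)+e(A\cap S_2,B\cap S_2)<\tfrac{\alpha}{2}|A||B|+4\beta^2n^2$; since $G$ has no $\alpha$-sparse cut, $e(A,B)\ge\alpha|A||B|$, so $\tfrac{\alpha}{2}|A||B|<4\beta^2n^2$, and with $\beta\le\alpha^{3/2}/9$ this gives $|A||B|<\tfrac{8}{81}\alpha^2n^2$, hence $|A|<\tfrac{16}{81}\alpha^2n$. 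On the other hand, counting $e_H(A,B)$ from $A$ and using $\delta(H)\ge\eta n/2$ and $e_H(A)\le|A|^2/4$, we get $e_H(A,B)\ge|A|\cdot\tfrac{\eta n}{2}-2e_H(A)\ge|A|n\bigl(\tfrac{\eta}{2}-\tfrac{8}{81}\alpha^2\bigr)\ge|A|\cdot\tfrac{\eta n}{4}$, where the last step uses $\alpha\le\eta/2$ and $\eta<1$. Comparing with $e_H(A,B)<\tfrac{\alpha}{2}|A||B|\le\tfrac{\alpha}{2}|A|n$ forces $\alpha>\eta/2$, a contradiction. So $H$ has no $\alpha/2$-sparse cut, which together with $\delta(H)\ge\eta n/2$ is exactly the assertion that $H$ is $(\eta/2,\alpha/2)$-robust.

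The step I expect to be most delicate is the cleaning: it must simultaneously terminate, keep both internal edge counts tiny, and raise every across-degree to at least $\eta n/2$, and the naive worry is a cascade in which flipping one vertex repeatedly spawns new deficient vertices. The monovariant $\Phi=e(S_1)+e(S_2)$ resolves all of this at once, forcing termination and bounding $e(S_i)$ with no need to count how many vertices ever move. In the second part, the only thing to verify is that the ``large cut contradicts robustness of $G$'' bound and the ``small cut contradicts the across-degree condition'' bound overlap — which they do without a genuine case split, precisely because $\beta$ is as small as $\alpha^{3/2}/9$ and $\alpha\le\eta/2$, presumably the reason those particular constants appear in the statement.
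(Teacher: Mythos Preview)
Your proof is correct. The overall structure matches the paper's --- clean the near-bipartition into an $(\eta/2,\beta)$-bipartition, then verify that $H$ has no $\alpha/2$-sparse cut --- but your cleaning step uses a genuinely different technique. The paper identifies in one shot the set $T_i'$ of ``bad'' vertices (those with cross-degree below $3\eta n/4$), bounds $|T_i'|<\frac{8\beta^2}{\eta}n$ by a counting argument, and then reassigns only those vertices so as to maximize the number of crossing edges; you instead iterate, moving any vertex with cross-degree below $\eta n/2$, and use the monovariant $\Phi=e(S_1)+e(S_2)$ to force termination and bound the internal edge counts simultaneously. Your version is arguably cleaner in that it never needs to count how many vertices move. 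For the robustness of $H$, both proofs rest on the same pair of observations --- a small side of a hypothetical sparse cut is ruled out by $\delta(H)\ge\eta n/2$, a large side by the $(\eta,\alpha)$-robustness of $G$ --- just organized differently: the paper splits explicitly into the cases $|X_1|\le\eta n/4$ and $|X_1|>\eta n/4$, while you first use $G$'s robustness together with $\beta\le\alpha^{3/2}/9$ to force $|A|<\frac{16}{81}\alpha^2 n$ and then extract the contradiction from the degree bound (your use of $e_H(A)\le|A|^2/4$, exploiting that $H$ is bipartite, is a nice touch).
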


\begin{proof}
Let $\{S_1', S_2'\}$ be a partition of $G$ such that $e(S_i')<\beta^2 n^2$ for all $i\in [2]$.  For $i\in [2]$, let $T_i'=\{v\in S_i': \deg(v, S_{3-i}')<3\eta n/4\}$.  Since $$\frac{1}{2}|T_i'|\eta n/4 \leq e(S_i') < \beta^2 n^2,$$ we have $|T_i'|<\frac{8\beta^2}{\eta}n$ for each $i\in [2]$.   Let $\{T_1, T_2\}$ be a partition of $T_1'\cup T_2'$ which maximizes $e((S_1'\setminus T_1')\cup T_1, (S_2'\setminus T_2')\cup T_2)$ and set $S_i:=(S_i'\setminus T_i')\cup T_i$ for $i\in [2]$.  For all $v\in S_i\setminus T_i$, we have $$\deg(v, S_{3-i})\geq 3\eta n/4-|T_{3-i}'|\geq  3\eta n/4-\frac{8\beta^2}{\eta}n\geq  \eta n/2.$$  For all $v\in T_i$, we have $\deg(v, S_{3-i})\geq \eta n/2$, as otherwise we could move $v$ to $T_{3-i}$ to increase the number of crossing edges, contradicting the choice of $\{T_1, T_2\}$.  For each $i\in [2]$, we also have $$e(S_i)\leq e(S_i')+e(T_{i}, S_i)\leq \beta^2 n^2+\frac{8\beta^2}{\eta}n|S_i|\leq \frac{9\beta^2}{\eta}n^2\leq \beta n^2,$$ which completes the proof that $\{S_1, S_2\}$ is an $(\eta/2, \beta)$-bipartition.

To see that $H=G[S_1,S_2]$ is $(\eta/2, \alpha/2)$-robust, first note that the degree condition follows from the definition of $(\eta/2, \beta)$-bipartition.  Let $\{X_1, X_2\}$ be a partition of $V(H)$  with $|X_1|\leq |X_2|$.  If $|X_1|\leq \eta n/4$, then by the degree condition and Observation \ref{simplerobust} we have $e_H(X_1, X_2)\geq \frac{\alpha}{2} |X_1||X_2|$.  So suppose $|X_1|>\eta n/4$.  Since $G$ has no $\alpha$-sparse cuts, we have $e_G(X_1, X_2)\geq \alpha|X_1||X_2|$ and thus 
$$e_H(X_1, X_2)= e_G(X_1, X_2)-e(S_1)-e(S_2)\geq \alpha|X_1||X_2|-\frac{18\beta^2}{\eta}n^2\geq \frac{\alpha}{2}|X_1||X_2|.$$ 
Where the last inequality follows by $\alpha\leq \eta/2$ and $\beta\leq \alpha^{3/2}/9$ and $|X_1||X_2|\geq \eta n/4(1-\eta/4)n\geq \eta n^2/8$.
\end{proof}

\subsection{Probability}

It will be helpful to have the following version of Markov's inequality.

\begin{lemma}[Markov]\label{markov}
Let $S$ be a finite multiset of non-negative real numbers.  Denote the sum of the elements of $S$ by $\Sigma$ and their average value by $\mu$.  For $a>0$, set $S_{\leq a}=\{i\in S: i\leq a\}$ and $S_{\geq a}=\{i\in S: i\geq a\}$.

\begin{enumerate}
\item $|S_{\geq a}|\leq \frac{\mu}{a}|S|=\frac{\Sigma}{a}$.

\item If $a \le \mu< \max \{S\}\leq b$, then $|S_{\leq a}|\leq \frac{b-\mu}{b-a}|S|=\frac{b|S|-\Sigma}{b-a}$.
\end{enumerate}
\end{lemma}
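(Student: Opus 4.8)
The plan is to prove both parts by the usual ``double counting the sum'' argument, in one direction for part (i) and in the reverse direction for part (ii). For part (i), I would simply observe that since every element of $S$ is non-negative,
\[
\Sigma \;=\; \sum_{i\in S} i \;\geq\; \sum_{i\in S_{\geq a}} i \;\geq\; a\,|S_{\geq a}|,
\]
and dividing by $a>0$ gives $|S_{\geq a}|\leq \Sigma/a = \frac{\mu}{a}|S|$, as claimed. This is the only content of part (i).

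For part (ii), I would partition $S$ into $S_{\leq a}$ and its complement $S'=\{i\in S: i>a\}$, and bound $\Sigma$ from \emph{above}: each element of $S_{\leq a}$ is at most $a$, and each element of $S'$ is at most $\max\{S\}\leq b$, so
\[
\Sigma \;=\; \sum_{i\in S_{\leq a}} i + \sum_{i\in S'} i \;\leq\; a\,|S_{\leq a}| + b\bigl(|S|-|S_{\leq a}|\bigr) \;=\; b|S| - (b-a)\,|S_{\leq a}|.
\]
Here the hypothesis $a\leq \mu<\max\{S\}\leq b$ guarantees $a<b$, so rearranging yields $|S_{\leq a}|\leq \frac{b|S|-\Sigma}{b-a}$; substituting $\Sigma=\mu|S|$ gives the stated form $\frac{b-\mu}{b-a}|S|$, and the same hypothesis ($a\leq\mu$) makes this bound at most $|S|$, so it is meaningful.

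I do not expect any real obstacle here; the only points requiring a moment's care are making sure the denominator $b-a$ is positive (which follows from $a\leq\mu<b$) and correctly identifying that the complement of $S_{\leq a}$ consists of elements strictly above $a$, all of which are still bounded by $b$. No other results from the paper are needed.
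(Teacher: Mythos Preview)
Your proof is correct and is essentially identical to the paper's: the paper simply records the chain of inequalities $a|S_{\geq a}|\leq \Sigma\leq a|S_{\leq a}|+b(|S|-|S_{\leq a}|)$ and notes that both parts follow, which is exactly your two bounds combined into one line.
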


\begin{proof}
Both parts follow from the fact that $a|S_{\geq a}|\leq \Sigma\leq a|S_{\leq a}|+b(|S|-|S_{\leq a}|)$.
\end{proof}

\begin{lemma}[Chernoff]
Let $X$ be a binomial or hypergeometric random variable.  Then for all $0<\ep<3/2$,
\begin{equation*}
Pr(|X-\mathbb{E}X|\geq \ep \mathbb{E}X)\leq 2\exp\left(-\frac{\ep^2}{3}\mathbb{E}X\right).
\end{equation*}
\end{lemma}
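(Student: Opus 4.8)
The plan is the standard exponential-moment (Bernstein--Chernoff) method: bound the moment generating function $\mathbb{E}[e^{tX}]$ and apply Markov's inequality to the nonnegative random variable $e^{tX}$ for a well-chosen $t$. I would first handle the binomial case. Writing $X=\sum_{i=1}^m X_i$ with the $X_i$ independent Bernoulli$(p_i)$ and $\mu=\mathbb{E}X=\sum_i p_i$, independence together with $1+x\le e^x$ gives, for every real $t$,
\[
\mathbb{E}[e^{tX}]=\prod_{i=1}^m\bigl(1+p_i(e^{t}-1)\bigr)\le \prod_{i=1}^m e^{p_i(e^{t}-1)}=e^{\mu(e^{t}-1)}.
\]

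For the upper tail, Markov's inequality applied to $e^{tX}$ with $t>0$ yields $Pr(X\ge(1+\ep)\mu)\le \exp\!\bigl(\mu(e^{t}-1)-t(1+\ep)\mu\bigr)$; the optimal choice $t=\ln(1+\ep)$ gives $Pr(X\ge(1+\ep)\mu)\le\exp\!\bigl(-\mu\bigl[(1+\ep)\ln(1+\ep)-\ep\bigr]\bigr)$. The analytic core is then the elementary inequality $(1+\ep)\ln(1+\ep)-\ep\ge \tfrac{3\ep^2}{6+2\ep}$ for all $\ep\ge0$ (a one-variable calculus check: both sides vanish at $0$, and comparing derivatives closes it), combined with $\tfrac{3\ep^2}{6+2\ep}\ge\tfrac{\ep^2}{3}\iff \ep\le \tfrac32$ — exactly the hypothesized range. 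The lower tail is analogous: with $t=-\ln(1-\ep)>0$ (for $\ep\in(0,1)$; the cases $\ep\ge1$ are trivial since then $Pr(X\le(1-\ep)\mu)\le Pr(X=0)\le e^{-\mu}$), one gets $Pr(X\le(1-\ep)\mu)\le\exp\!\bigl(-\mu\bigl[(1-\ep)\ln(1-\ep)+\ep\bigr]\bigr)$, and $(1-\ep)\ln(1-\ep)+\ep\ge\ep^2/2\ge\ep^2/3$, so the lower tail is in fact bounded by $e^{-\ep^2\mu/3}$ with room to spare. A union bound over the two one-sided estimates produces the factor $2$.

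For the hypergeometric case I would invoke Hoeffding's comparison: if $X$ is hypergeometric and $Y$ binomial with the same mean $\mu$, then $\mathbb{E}[\phi(X)]\le\mathbb{E}[\phi(Y)]$ for every convex $\phi$; taking $\phi(x)=e^{tx}$ shows $\mathbb{E}[e^{tX}]\le e^{\mu(e^{t}-1)}$ holds verbatim, after which the rest of the argument is word-for-word the same. (Alternatively, one may simply cite a standard reference giving the hypergeometric Chernoff bound directly.)

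The probabilistic content here is textbook; the only thing to be careful about is the pair of real-analysis inequalities that convert the exponents $(1\pm\ep)\ln(1\pm\ep)\mp\ep$ into the clean form $\ep^2/3$. In particular one must track the constant $3$ and the cutoff $\ep<3/2$ precisely — that cutoff is forced by the upper tail, which is why the hypothesis reads $0<\ep<3/2$ rather than $\ep<1$.
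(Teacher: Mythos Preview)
Your argument is correct and is the standard textbook derivation of this bound. Note, however, that the paper does not give a proof of this lemma at all: it is stated as a known result (with a reference to Chernoff's original paper in the bibliography), so there is no paper proof to compare against. Your write-up would serve perfectly well as a self-contained justification if one were desired.
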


\subsection{Regularity}

Implicit in the proof of the regularity lemma \cite{Sz} is the fact that one can start with an arbitrary initial partition of the vertex set (into parts that are not too small) and obtain an $\ep$-regular partition which has the property that all parts are subsets of the initial partition.\footnote{The initial partition consisting of $\ell$ parts with $n\gg 1/\ell$ is refined over and over until the $\ep$-regular partition is obtained.}  Below is the standard degree form for the $2$-colored regularity lemma (see \cite{KS}) with this fact made explicit.  We call $\{E_1, E_2\}$ a $2$-multicoloring of $G$ if $E_1\cup E_2=E(G)$ (i.e. we allow for $E_1\cap E_2\neq \emptyset$).

\begin{lemma}[2-colored regularity lemma -- degree form]\label{2colordegreeform}
Let $G$ be a 2-colored graph on $n$ vertices, let $0<\rho<1/2$, and let $\{Q_1,\dots,Q_\ell\}$ be a partition of $V(G)$ with $|Q_i|\geq \rho n$ for all $i\in [\ell]$.  For all $0< \ep\ll \rho$ and  $m\geq \ell$, there exists an $M = M(\ep, m)$ such that if $d \in [0,1]$ is any real number, then there is $m \le k \le M$, a partition $\{V_0, V_1,\dots, V_k\}$ of the vertex set $V$ and a subgraph $G' \subseteq G$ with the following properties:
\begin{enumerate}
\item for all $i\in [k]$, there exists $j\in [\ell]$ such that $V_i\subseteq Q_j$,
\item $|V_0| \le \ep n$,
\item all clusters $V_1, \dots, V_k$ are of the same size $|V_1|\le\lceil\ep n\rceil$,
\item $d_{G'}(v)>d_G(v)-(2d+\ep)n$ for all $v\in V$,
\item $e(G'[V_i]) = 0$ for all $i \in [k]$,
\item for all $1 \le i < j \le k$, the pair $(V_i,V_j)$ is $\ep$-regular in $G_1'$ with a density either 0 or greater than $d$ and $\ep$-regular in $G_2'$ with a density either 0 or greater than $d$, where $E(G') = E(G_1') \cup E(G_2')$ is the induced 2-coloring of $G'$.
\end{enumerate}
\end{lemma}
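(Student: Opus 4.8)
The plan is to obtain this as the standard degree form of the $2$-colored regularity lemma (as in \cite{KS}) together with one extra feature, property~(i), which is exactly the statement that the regular partition refines the given partition $\{Q_1,\dots,Q_\ell\}$. As observed in the paper, this feature is already present in Szemer\'edi's proof \cite{Sz}: one runs the usual energy-increment iteration with initial partition $\mathcal{P}_0:=\{Q_1,\dots,Q_\ell\}$, and every refinement step only sub-divides (equipartitions) the current classes, so each class of every partition produced — in particular of the final one — is contained in some $Q_j$. The hypotheses $|Q_i|\ge\rho n$ and $\ep\ll\rho$ are what make this legitimate: since $\ell\le 1/\rho$ the number of initial classes is bounded, $\ell\ep\ll 1$ so the exceptional set stays below $\ep n$ through the bounded number of iterations, and the common cluster size is at most $\ep n<\rho n\le|Q_j|$, so the equipartition within each $Q_j$ actually makes sense. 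This already delivers~(i), (ii) and~(iii), with a bound $M=M(\ep,m)$ (which now also depends on $\rho$ through $\ell$), and with every pair $\ep$-regular in both $G_1$ and $G_2$.

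To handle the two colors simultaneously one runs the iteration on the combined energy $q(\mathcal P):=q_{G_1}(\mathcal P)+q_{G_2}(\mathcal P)\in[0,2]$, stopping only once $\mathcal P$ is $\ep$-regular in $G_1$ \emph{and} in $G_2$; if $\mathcal P$ fails $\ep$-regularity in one color, the usual defect Cauchy--Schwarz argument applied to that color refines $\mathcal P$ and raises $q_{G_c}(\mathcal P)$ by a constant depending only on $\ep$, while the other summand cannot decrease under refinement, so $q$ increases by a constant and the process halts. This is precisely the $2$-colored regularity lemma of \cite{KS}. It remains to pass to the degree form. The partition above does not involve $d$; given $d\in[0,1]$, let $G'\subseteq G$ be obtained from $G$ by deleting (a) every edge inside a cluster $V_i$, (b) every edge meeting $V_0$, and (c) for each color $c\in[2]$ and each pair $(V_i,V_j)$, all color-$c$ edges between $V_i$ and $V_j$ whenever the color-$c$ density of that pair is at most $d$. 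Then~(v) holds by construction, and~(vi) holds since a surviving pair keeps its (unchanged, hence $\ep$-regular) color-$c$ edge set of density exceeding $d$, while a pair treated in~(c) becomes empty in color $c$. For~(iv), a vertex $v$ loses fewer than $|V_1|$ edges inside its own cluster, at most $|V_0|$ edges to $V_0$, and fewer than $dn$ edges of each color to pairs deleted in~(c), a total loss below $|V_0|+|V_1|+2dn$; choosing the exceptional-set parameter and a lower bound on the number of clusters so that $|V_0|\le\ep n/2$ and $|V_1|\le\ep n/2$ yields $d_{G'}(v)>d_G(v)-(2d+\ep)n$ and simultaneously gives the bounds in~(ii) and~(iii).

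I do not expect a single deep step here: the real content is (1) re-deriving the ``refines the initial partition'' variant from inside the proof of the regularity lemma rather than quoting it as a black box — in particular checking that each equipartition step preserves ``contained in some $Q_j$'' and that the exceptional set never exceeds $\ep n$ over the bounded iteration — and (2) keeping the nested constants $\rho\gg\ep$ and the edge-counting of the degree-form cleanup straight. The main obstacle is therefore (1): one cannot simply cite the regularity lemma, but must re-inspect its proof, which is routine but needs to be done carefully.
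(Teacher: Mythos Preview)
The paper does not give a proof of this lemma; it is stated as a known result, with the text preceding it simply noting that the refinement property~(i) is implicit in Szemer\'edi's original proof \cite{Sz} (a footnote says only that ``the initial partition consisting of $\ell$ parts with $n\gg 1/\ell$ is refined over and over until the $\ep$-regular partition is obtained'') and referring to \cite{KS} for the $2$-colored degree form. Your outline is correct and is exactly the standard derivation the paper is alluding to, so there is nothing to compare.
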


\begin{definition}[$(\ep,d)$-reduced graph]\label{def:reduced}
Given a graph $G$, an initial partition $\{Q_1, \dots, Q_\ell\}$, and a partition $\{V_0, V_1, \dots, V_k\}$ satisfying conditions (i)-(vi) of Lemma \ref{2colordegreeform}, we define the $(\ep, d)$-reduced graph of $G$ to be the graph $\Gamma$ on vertex set $\{V_1, \dots, V_k\}$ such that $V_iV_j$ is an edge of $\Gamma$ if $G'[V_i, V_j]$ has density at least $2d$.  For each $V_iV_j\in E(\Gamma)$, we assign color $1$ if $G'_1[V_i, V_j]$ has density at least $d$ and color $2$ if $G'_2[V_i, V_j]$ has density at least $d$ (note that since the total density is at least $2d$ every edge must receive a color, but it need not be unique). 
\end{definition}

The fact that edges can receive two colors won't bother us as later on we will find a matching in the reduced graph and at that point (but only at that point) we can choose an arbitrary color for the edge.

The following is a well known consequence of the regularity lemma (see Proposition 42 in \cite{KO}).

\begin{lemma}\label{reduceddegree}
Let $0<2\ep\leq d\leq c/2$ and let $G$ be a graph on $n$ vertices with $\delta(G)\geq cn$. If $\Gamma$ is a $(\ep, d)$-reduced graph of $G$ obtained by applying Lemma \ref{2colordegreeform}, then $\delta(\Gamma)\geq (c-2d)k$.
\end{lemma}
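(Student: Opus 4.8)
The plan is to read the bound directly off the conclusions of Lemma \ref{2colordegreeform} and Definition \ref{def:reduced}; this is the standard ``the reduced graph inherits the minimum degree'' fact (Proposition 42 of \cite{KO}), so I only sketch the counting. Fix a cluster $V_i$ and set $t:=\deg_\Gamma(V_i)$; the goal is $t\geq(c-2d)k$. Let $N=\{V_j: V_iV_j\in E(\Gamma)\}$, so $|N|=t$, and let $\bar N=\{V_j: j\neq i,\ V_iV_j\notin E(\Gamma)\}$, so that $|N|+|\bar N|=k-1$.

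First I lower bound the number of $G'$-edges incident to $V_i$. By property (iv) of Lemma \ref{2colordegreeform} and $\delta(G)\geq cn$, every $v\in V_i$ has $d_{G'}(v)>cn-(2d+\ep)n=(c-2d-\ep)n$, and by property (v) no $G'$-edge lies inside $V_i$; summing over $v\in V_i$ (and using that all clusters have size $|V_1|$), the number of $G'$-edges between $V_i$ and $V\setminus V_i$ exceeds $|V_1|(c-2d-\ep)n$. Now I bound the same quantity from above by classifying the far endpoint. Edges into $V_0$ number at most $|V_0||V_1|\leq\ep n|V_1|$. Edges into the clusters of $N$ number at most $t|V_1|^2$. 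For each $V_j\in\bar N$, Definition \ref{def:reduced} together with property (vi) (which makes $(V_i,V_j)$ $\ep$-regular in each color) forces the $G'$-density of $(V_i,V_j)$ to lie below the threshold defining $E(\Gamma)$; hence, discarding the at most $\ep|V_1|$ vertices of $V_i$ that are exceptional for the $\ep$-regularity of this pair, the number of $G'$-edges from $V_i$ into $V_j$ is at most a small constant multiple of $d|V_1|^2$, so all of $\bar N$ together contributes only a lower-order term.

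Comparing the lower and upper bounds, dividing through by $|V_1|$, and using $|V_1|\leq n/k$ (valid since $|V_0|+k|V_1|=n$), the $\ep$-terms and the $\bar N$-contribution are absorbed and one is left with $t\geq(c-2d)k$; taking the minimum over $i$ gives $\delta(\Gamma)\geq(c-2d)k$. The computation is entirely routine; the one step that rewards a moment's thought is the estimate on the edges from $V_i$ into the clusters of $\bar N$, since $\ep$-regularity of a low-density pair controls the degree into $V_j$ only of all but an $\ep$-fraction of the vertices of $V_i$ — one then folds those few exceptional vertices, along with $V_0$, into the $\ep n$ of slack already built into property (iv). I do not anticipate any real obstacle.
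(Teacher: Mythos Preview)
Your counting scheme is the standard one (and is exactly what the paper defers to via the citation to Proposition~42 of \cite{KO}), but one step does not go through as you describe. You assert that for each $V_j\in\bar N$ the number of $G'$-edges from $V_i$ into $V_j$ is at most a constant multiple of $d|V_1|^2$ and hence ``all of $\bar N$ together contributes only a lower-order term''. The first claim is correct---indeed, immediately from Definition~\ref{def:reduced} the $G'$-density of a non-edge of $\Gamma$ is below $2d$, so $e_{G'}(V_i,V_j)<2d|V_1|^2$, and no appeal to $\ep$-regularity or to exceptional vertices is needed---but the second is not. There can be as many as $k-1$ clusters in $\bar N$, so the total $\bar N$-contribution is up to $2dk|V_1|^2\leq 2dn|V_1|$, which is of the \emph{same} order as the main terms, not lower order. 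This is a genuine departure from the one-colour setting of \cite{KO}: there property~(vi) forces the $G'$-density between any non-adjacent pair of clusters to be exactly $0$, so the $\bar N$-term really does vanish. In the present two-colour setting a non-edge of $\Gamma$ may well have one colour of density $0$ and the other of density lying in $(d,2d)$.

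Carrying the $\bar N$-term through honestly gives
\[
(c-2d-\ep)n|V_1| \;<\; \ep n|V_1| + t|V_1|^2 + 2d(k-1-t)|V_1|^2,
\]
and after dividing by $|V_1|^2$ and using $k|V_1|\leq n$ one obtains only $t>(c-4d-2\ep)k/(1-2d)$, i.e.\ roughly $(c-5d)k$ once $2\ep\leq d$ is invoked, rather than $(c-2d)k$. For every use of the lemma in the paper this is harmless, since $d\ll\gamma$ and only $\delta(\Gamma)\geq (3/4)k$ is ever needed; but your sketch does not establish the bound as stated. Either weaken the conclusion to $\delta(\Gamma)\geq(c-5d)k$, or flag explicitly that the constant in front of $d$ is immaterial downstream.
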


We now prove that the reduced counterparts of robust components remain connected in the reduced graph. Note that it is possible to prove that robustness (with slightly relaxed parameters) is inherited by the reduced graph, but for our purposes, this is not needed.

\begin{lemma}\label{connectedcomponent}
Let $0<\ep, d, \eta, \alpha, \rho$ be chosen so that $\ep\ll \rho$ and $4d+2\ep<\alpha \eta$.  Let $G$ be a $2$-colored graph and suppose there exists $X\subseteq V(G)$ such that $X$ is $(\eta, \alpha)$-robust in $G_i$ for some $i\in [2]$.  Suppose $\{Q_1, \dots, Q_\ell\}$ is a partition of $V(G)$ which refines $\{X, V(G)\setminus X\}$ and satisfies $|Q_i|\geq \rho n$ for all $i\in [\ell]$.  If $\Gamma$ is the $(\ep, d)$-reduced graph of $G$ respecting the given partition, then the reduced graph of color $i$ induced by the clusters contained in $X$ is connected.

\end{lemma}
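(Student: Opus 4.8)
The plan is to argue by contradiction. Suppose the color-$i$ reduced graph on $\mathcal C:=\{V_j : V_j\subseteq X\}$ is disconnected. Since $X$ is $(\eta,\alpha)$-robust in $G_i$ we have $\delta(G_i[X])\ge\eta n$, hence $|X|>\eta n$, and since $|V_0|\le\ep n$ the family $\mathcal C$ is nonempty; so fix a partition $\mathcal C=\mathcal A\cup\mathcal B$ into nonempty parts with no color-$i$ edge of $\Gamma$ between them. Put $A:=\bigcup_{V\in\mathcal A}V$, $B:=\bigcup_{V\in\mathcal B}V$, and $W:=X\cap V_0$, so $\{A,B,W\}$ partitions $X$ and $|W|\le\ep n$. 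The goal is to show that $\{A, B\cup W\}$ is an $\alpha$-sparse cut of $G_i[X]$, contradicting robustness.

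First I would bound $e_{G_i}(A,B)$ from above. By Definition~\ref{def:reduced}, the absence of a color-$i$ edge between clusters $V_a\in\mathcal A$ and $V_b\in\mathcal B$ forces the density of $G'_i[V_a,V_b]$ to be below $2d$ (either the total $G'$-density is $<2d$, or the pair is an edge of $\Gamma$ not assigned color $i$ and so has $G'_i$-density $<d$); summing over all such pairs, $e_{G'_i}(A,B)<2d|A||B|$. Since passing from $G'$ to $G$ adds at most $d_G(v)-d_{G'}(v)<(2d+\ep)n$ edges at each vertex $v$ (property (iv) of Lemma~\ref{2colordegreeform}), counting the extra $G_i$-edges at vertices of $A$ gives
\[
e_{G_i}(A,B) < 2d|A||B| + |A|(2d+\ep)n.
\]

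The key step is to certify that $|A|$ and $|B|$ are each linear in $n$ --- otherwise the no-sparse-cut condition is too weak to be violated. For this I would use the minimum-degree half of robustness: each $v\in A$ has at least $\eta n$ neighbours in $X$, at most $|A|-1$ of them in $A$, and at most $|W|\le\ep n$ in $W$, so $e_{G_i}(A,B)\ge|A|(\eta n-|A|-\ep n)$. Comparing this with the displayed upper bound, using $|B|\le n$, and cancelling $|A|$, I get $|A|>(\eta-4d-2\ep)n$; the symmetric computation gives $|B|>(\eta-4d-2\ep)n$ as well.

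Finally, for the cut: $e_{G_i}(A,W)\le|A||W|\le|A|\ep n$, so combining with the displayed bound (and $|B|\le n$),
\[
e_{G_i}(A, B\cup W) < 2d|A||B|+|A|(2d+\ep)n+|A|\ep n \le |A|(4d+2\ep)n,
\]
whereas the absence of an $\alpha$-sparse cut in $G_i[X]$ requires $e_{G_i}(A,B\cup W)\ge\alpha|A||B\cup W|\ge\alpha|A||B|>\alpha(\eta-4d-2\ep)|A|n$. Once $4d+2\ep$ is small enough relative to $\alpha\eta$ these two estimates contradict each other, completing the proof. The second step is the main obstacle: one must first invoke the minimum-degree half of robustness to guarantee that both sides of the prospective cut are linear in $n$, and only then can the no-sparse-cut half be used. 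The exceptional set $V_0$ and the edges removed in the regularity step contribute only lower-order terms, but have to be tracked throughout.
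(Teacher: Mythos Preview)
Your proof is correct and follows essentially the same contradiction strategy as the paper: split the clusters in $X$ into two nonempty sides with no color-$i$ reduced edge between them, upper-bound $e_{G_i}(A,B)$ using property~(iv) of Lemma~\ref{2colordegreeform}, and contradict the no-$\alpha$-sparse-cut hypothesis. The paper shortens your step~3 by simply choosing $\mathcal A$ to be the \emph{smallest} component of $\Gamma_i[\mathcal C]$, so that $|B|\ge (|X|-\ep n)/2 \ge (\eta-\ep)n/2$ immediately and no lower bound on $|A|$ is needed; conversely, your explicit treatment of $W=X\cap V_0$ and of the residual $G'_i$-density (the $2d|A||B|$ term) is more careful than the paper's one-line bound.
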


\begin{proof}
Without loss of generality, suppose $X\subseteq V(G)$ is $(\eta, \alpha)$-robust in $G_1$.  After applying Lemma \ref{2colordegreeform} to $G$ with initial partition $\{Q_1, \dots, Q_\ell\}$, let $\cX$ be the set of clusters which are subsets of $X$ and suppose that $\Gamma_1[\cX]$ is not connected.  Let $\cA$ be the smallest component of $\Gamma_1[\cX]$ and let $\cB=\cX-\cA$.  Let $A=\bigcup_{V\in V(\cA)}V$ and $B=\bigcup_{V\in V(\cB)}V$; note that $|B|\geq |X|/2\geq \eta n /2$.  We have, by property (iv) of Lemma \ref{2colordegreeform},
$$e_{G_1}(A, B)< |A|(2d+\ep)n<\alpha |A||B|$$
contradicting the fact that $G_1[X]$ is $(\eta, \alpha)$-robust.
\end{proof}

\section{Connecting and Absorbing}\label{sec:conabs}

In this section, we prove that $(\eta, \alpha)$-robust graphs $G$ have the property that between any pair of vertices there are many short paths, and either every vertex is in many short odd cycles or $G$ is close to bipartite and pairs of vertices from opposite sides of the bipartition are in many short even cycles.  Together these properties will essentially allow us to say that in an $(\eta, \alpha)$-robust graph, a nearly spanning cycle is essentially as good as a spanning cycle.  To put this into an existing context, say that $G$ is a \emph{$(\nu, \tau)$-robust-expander} if for all $S\subseteq V(G)$ with $\tau n\leq |S|\leq (1-\tau)n$, $|\{v: \deg(v, S)\geq \nu n\}|\geq |S|+\nu n$.  K\"uhn, Osthus, and Treglown \cite {KOT} proved (stated here for undirected graphs) that for $0<\frac{1}{n_0}\ll \nu\leq\tau\ll\eta$, if $G$ is a graph on $n\geq n_0$ vertices such that $\delta(G)\geq \eta n$ and $G$ is a $(\nu, \tau)$-robust-expander, then $G$ has a hamiltonian cycle.  The results of this section show that properties weaker than ``robust-expansion'' imply absorption and thus reduces the problem of finding a spanning cycle in such a graph to finding a nearly spanning cycle.

\begin{definition}[Neighborhood cascade]
Let $G$ be a graph and let $x\in V(G)$.  A $(k,\alpha)$-neighborhood cascade of $x$ is a collection of disjoint sets $\{X_1, \dots, X_k\}$ such that $X_1=N(x)$ and for all $1\leq i\leq k-1$ we have $\delta(X_{i+1}, X_{i})\geq \alpha n/k$.  If $V(G)=\{x\}\cup\bigcup_{1\leq i\leq k}X_i$, then we say that the neighborhood cascade is spanning.
\end{definition}

\begin{lemma}\label{neighborcascade}
Let $0< \alpha\leq \eta/8$ and let $G$ be a graph on $n$ vertices. If $G$ is $(\eta, \alpha)$-robust, then for all $x\in V(G)$ there exists a spanning $(k, \alpha^2)$-neighborhood cascade of $x$ with $1\leq k\leq \floor{1/\alpha^2}-1$.
\end{lemma}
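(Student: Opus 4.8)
The plan is to build the cascade greedily, setting $X_1 = N(x)$ and then at each stage letting $X_{i+1}$ be the set of all not-yet-used vertices having at least $\alpha^2 n / k$ neighbors in $X_i$, continuing until the used vertices exhaust $V(G)$. The whole difficulty is to show this process terminates in at most $\floor{1/\alpha^2}-1$ steps and, in particular, that we never get \emph{stuck}, i.e.\ we never reach a stage where unused vertices remain but none of them has enough neighbors into the current layer $X_i$. For the length bound, observe that at each step $i \ge 2$ we have $\delta(X_i, X_{i-1}) \ge \alpha^2 n / k$, and if the cascade were to run for $k$ steps we would want the layers to cover everything; the point is that a non-stuck step adds a nonempty layer, so the process runs for at most $n$ steps, but we need the much stronger bound in terms of $\alpha$. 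To get it, I would argue that each layer $X_i$ with $i \ge 1$ has size at least $\eta n - $ (something small), or more precisely that the union $X_1 \cup \cdots \cup X_i$ grows by a definite fraction of $n$ at each step; since $|X_1| = |N(x)| \ge \eta n - 1 \ge \alpha n$ (using $\delta(G) \ge \eta n$ and robustness, and $\alpha \le \eta/8$), and each subsequent step we will show adds at least $\alpha^2 n$ new vertices (or finishes), the number of steps is at most $\lceil (1 - \alpha)/\alpha^2 \rceil \le \floor{1/\alpha^2}-1$ for $n$ large; one has to be slightly careful with the exact constants, but this is routine arithmetic.

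The substantive step — the main obstacle — is showing the process cannot get stuck. Suppose at stage $i$ we have used up the set $U = \{x\} \cup X_1 \cup \cdots \cup X_i$ with $U \ne V(G)$, and let $W = V(G) \setminus U$ be the unused vertices; "stuck" means every $w \in W$ has $\deg(w, X_i) < \alpha^2 n / k$. I would like to derive a contradiction with the no-$\alpha$-sparse-cut property of $G$, applied to a suitable cut. The natural cut to consider is $(W, U)$, but a priori $(W, U)$ need not be sparse just because $W$ has few edges to the \emph{last} layer $X_i$ — vertices of $W$ could still see earlier layers heavily. So the real content is a pigeonhole/counting argument: because the layers $X_1, \dots, X_i$ are disjoint and (by the argument above) there are at most about $1/\alpha^2$ of them, if some vertex $w \in W$ had many neighbors in $U$, those neighbors would be concentrated in some layer $X_j$, and then $w$ would have been placed in $X_{j+1}$ already — contradiction, unless $j = i$, which is excluded by the stuck hypothesis. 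Concretely: every $w \in W$ has fewer than $\alpha^2 n / k$ neighbors in $X_i$ (stuck), and fewer than $\alpha^2 n/k$ in each $X_j$ with $j < i$ as well (else $w \in X_{j+1}$), hence $\deg(w, U) < 1 + k \cdot (\alpha^2 n / k) = 1 + \alpha^2 n \le 2\alpha^2 n$. Summing over $w \in W$ gives $e(W, U) \le 2\alpha^2 n |W|$. On the other hand $|W| \le n \le 2|V(G)\setminus W|$ would force... here I need $|U| \ge |W|$ or a comparable bound; since $|X_1| \ge \eta n /2$ already, certainly $|U| \ge \eta n / 2$, so $e(W,U) < 2\alpha^2 n |W| \le \tfrac{4\alpha^2}{\eta} |U||W|$. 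Choosing the layer threshold as $\alpha^2 n/k$ with $k \le \floor{1/\alpha^2}$ and using $\alpha \le \eta/8$, one checks $\tfrac{4\alpha^2}{\eta} \le \alpha$ (indeed $4\alpha/\eta \le 1/2 \le 1$), so $(W, U)$ is an $\alpha$-sparse cut in $G$, contradicting $(\eta,\alpha)$-robustness. This handles the case $|W|$ not too small; if $W$ is small, say $|W| \le \eta n / 2$, then Observation \ref{simplerobust} directly gives $e(W, U) \ge \tfrac{\eta}{2}|W||U| \ge \alpha |W||U|$, and combined with $e(W,U) < 2\alpha^2 n |W| \le \alpha |W| \cdot \tfrac{\eta}{2} n \le \alpha|W||U|$ we again get a contradiction (after fixing constants so that $2\alpha^2 n < \tfrac{\eta}{2} \cdot \eta n /2 \le \delta(W,U)$-type bound, which holds since $\alpha \le \eta/8$). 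Either way the process cannot stall before $W = \emptyset$.

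Putting it together: start with $X_1 = N(x)$, which is nonempty and disjoint from $\{x\}$; run the greedy layering with threshold $\alpha^2 n / k$ (one can fix $k$ at the end to be the actual number of layers produced, or bound it a priori); the non-stuck argument shows it terminates only when all of $V(G) \setminus \{x\}$ is covered, so the cascade is spanning; and the growth argument — each layer after the first adds at least, say, $\alpha^2 n$ vertices, while the first already accounts for at least $\eta n - 1 \ge \alpha n$ — bounds the number of layers by $\floor{1/\alpha^2} - 1$. The bookkeeping to make the chain of inequalities $4\alpha^2/\eta \le \alpha$, $2\alpha^2 n < $ (threshold sums), etc., all hold simultaneously is exactly what the hypothesis $\alpha \le \eta/8$ is for, and I would not belabor it. I expect the only genuinely delicate point to be organizing the "stuck $\Rightarrow$ sparse cut" step so that it is valid uniformly whether or not $|W|$ is large; splitting into the two cases above (via Observation \ref{simplerobust} for small $W$, direct counting plus no-sparse-cut for large $W$) is the cleanest route.
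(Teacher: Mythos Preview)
Your one-phase construction --- layering by degree into the \emph{previous} layer $X_i$ with threshold $\alpha^2 n/k$ --- has a circularity that the paper's proof sidesteps, and the gap is not merely cosmetic.

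The threshold $\alpha^2 n/k$ uses $k$, the final number of layers, before it is known. If you fix it a priori at $T=\alpha^2 n/K$ with $K=\lfloor 1/\alpha^2\rfloor-1$, you may terminate in some $k\le K$ steps, but then your cascade only guarantees $\delta(X_{i+1},X_i)\ge\alpha^2 n/K$, which is \emph{weaker} than the required $\alpha^2 n/k$ (since $k\le K$). If instead you take $T=\alpha^2 n$ so the cascade parameter is correct regardless of $k$, the degree bound for unplaced vertices becomes $\deg(w,X_{\le i})<i\,\alpha^2 n$, which is of order $n$ once $i$ is near $1/\alpha^2$, and the sparse-cut contradiction no longer fires. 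Relatedly, your ``stuck'' argument only shows $X_{i+1}\ne\emptyset$; it does not show $|X_{i+1}|\ge\alpha^2 n$, which is what you need for the bound on $k$. You assert this growth but never prove it, and with threshold into $X_i$ alone it is not clear how: the no-sparse-cut hypothesis controls edges from $W_i$ into the cumulative set $X_{\le i}$, not into the single layer $X_i$.

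The paper separates the two concerns. First it layers by degree into the \emph{cumulative} set $X_{\le i}$ with threshold $\alpha^2 n$: then growth $|\widetilde X_{i+1}|\ge\alpha^2 n$ drops straight out of $e(X_{\le i},W_i)\ge\alpha|X_{\le i}||W_i|$, yielding $k_0\le\lfloor 1/\alpha^2\rfloor-1$ with no circularity. Second, it redistributes: each $v\in\widetilde X_i$ has $\deg(v,X_{\le i-1})\ge\alpha^2 n$, so by pigeonhole it has $\ge\alpha^2 n/k_0$ neighbours in some \emph{specific} earlier layer $\widetilde X_j$, and is moved to layer $j+1$. Your pigeonhole observation in the ``stuck'' paragraph is exactly this idea --- you have the right pieces, but assembled so that growth and the cascade property are demanded simultaneously rather than in sequence.
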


\begin{proof}
Let $x\in V(G)$ and set $\widetilde{X}_1=N(x)$.  If $|\widetilde{X}_1|=n-1$, then we are done; so suppose not.  For $i\geq 1$, set $X_{\leq i}= \{x\} \cup \bigcup_{1\leq j\leq i} \widetilde{X}_j$ and $\widetilde{X}_{i+1}=\{v\in V(G)\setminus X_{\leq i} :\deg(v, X_{\leq i})\geq \alpha^2 n\}$.

For any $i\geq 1$, if $|X_{\leq i}|<(1-\eta/2)n$, then $$e(X_{\leq i}, V(G)\setminus X_{\leq i})\geq \alpha |X_{\leq i}||V(G)\setminus X_{\leq i}|\geq \alpha \eta/2(1-\eta/2)n^2\geq \alpha\eta n^2/4$$ and thus $|\widetilde{X}_{i+1}|\geq \frac{\alpha\eta n^2/4-\alpha^2n^2}{n}\geq \alpha^2n.$

This implies that $|X_{\leq i}|\geq (1-\eta/2)n$ for some integer $i\leq \frac{1-\eta/2}{\alpha^2}\leq \floor{1/\alpha^2}-2$, which implies $X_{\leq i+1}=V(G)$ since $\delta(G)\geq \eta n$; note that $i+1\leq 1/\alpha^2-1$.  Let $k_0$ be minimum such that $X_{\leq k_0}=V(G)$ and note that as stated above
$1\leq k_0\leq 1/\alpha^2-1.$

We will now consider each $2\leq i\leq k_0$ one by one and update the sets $\widetilde{X}_2,\dots, \widetilde{X}_i$ each time.  We proceed from $i=2$ to $i=k_0$.  Let $h\leq i$ be the number of sets in $\{\widetilde{X}_1, \dots, \widetilde{X}_i\}$ which are non-empty and for all $1\leq j\leq i-1$, let $$\widetilde{X}_i(j)=\{v\in \widetilde{X}_i: j \text{ is minimum such that } \deg(v, \widetilde{X}_j)\geq \alpha^2 n/h\}.$$
Note that by the definition of $\widetilde{X}_i$, the collection $\{\widetilde{X}_i(1),\dots, \widetilde{X}_i(i-1)\}$ forms a partition of $\widetilde{X}_i$ (where some of the $\widetilde{X}_i(j)$'s may be empty).  Now for all $2\leq j\leq i-1$, set $\widetilde{X}_j:=\widetilde{X}_j\cup \widetilde{X}_i(j-1)$ and $\widetilde{X}_i:=\widetilde{X}_i(i-1)$.  At the end of this process let $k\leq k_0$ be maximal such that ${X}_k\neq \emptyset$. For all $1\leq i\leq k-1$ we have $\delta({X}_{i+1}, {X}_{i})\geq \alpha^2 n/k$, as desired.
\end{proof}

\subsection{Connecting}\label{section:connecting}

\begin{definition}[$(k,\alpha)$-connecting property]
Let $G$ be a graph on $n$ vertices.  For $x,y\in V(G)$, let $\con_i(x,y)$ be the set of $x,y$-paths having $i$ internal vertices.  We say $G$ has the $(k, \alpha)$-connecting property if for all $x,y\in V(G)$, there exists $1\leq i\leq k$ such that $|\con_i(x,y)|\geq (\alpha n)^i$.
\end{definition}

The following lemma essentially says that graphs are robust if and only if they have the connecting property.  

\begin{lemma}[Connecting Lemma]\label{connectingrobust}
Let $0< \frac{1}{n_0}\ll\alpha, \eta$ and let $G$ be a graph on $n\geq n_0$ vertices. 
\begin{enumerate}
\item If $\eta\geq 2\alpha^2$ and $G$ is $(\eta, \alpha)$-robust, then $G$ has the $(\frac{1}{\alpha^2}, \alpha^4)$-connecting property.
\item If $\delta(G)\geq \eta n$ and $G$ has the $(\frac{1}{\sqrt{\alpha}}-1,\alpha)$-connecting property, then $G$ is $(\eta, \alpha^{k+1})$-robust.
\end{enumerate}
\end{lemma}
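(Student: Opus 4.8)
\textbf{Part (i).} The plan is to use the spanning neighborhood cascade from Lemma \ref{neighborcascade}. Fix $x, y \in V(G)$. Applying Lemma \ref{neighborcascade} to $x$ (note $\alpha \le \eta/8$ follows from $\eta \ge 2\alpha^2$ and $\alpha$ small, or can be arranged by the $\ll$ convention), we get a spanning $(k, \alpha^2)$-neighborhood cascade $\{X_1, \dots, X_k\}$ of $x$ with $1 \le k \le \lfloor 1/\alpha^2\rfloor - 1$, where $X_1 = N(x)$ and $\delta(X_{j+1}, X_j) \ge \alpha^2 n / k \ge \alpha^4 n$ (using $k \le 1/\alpha^2$). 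Since the cascade is spanning, $y \in \{x\} \cup \bigcup_j X_j$; say $y \in X_t$ (if $y \in N(x)$ we are trivially done with $i=0$, or we can route through one vertex anyway). The idea is to build an $x,y$-path by walking backwards through the cascade: start at $y \in X_t$, pick a neighbor in $X_{t-1}$, then a neighbor of that in $X_{t-2}$, and so on down to a vertex in $X_1 = N(x)$, then close up to $x$. To count such paths greedily: having chosen the partial path with $j$ vertices selected so far (one in each of $X_t, X_{t-1}, \dots, X_{t-j+1}$), the current endpoint lies in $X_{t-j+1}$ and has at least $\alpha^4 n$ neighbors in $X_{t-j}$, of which at most $j \le 1/\alpha^2$ have already been used; for $n \ge n_0$ large this leaves at least $\frac{1}{2}\alpha^4 n$ choices at each step. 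Multiplying over the $\le t \le k \le 1/\alpha^2$ steps, and observing that the internal vertices number $i = t - 1 \le 1/\alpha^2$, we get $|\con_i(x,y)| \ge (\tfrac{1}{2}\alpha^4 n)^i \ge (\alpha^4 n)^i$ after adjusting constants via the $\frac{1}{n_0} \ll \alpha$ convention (or, to be safe, one should track a slightly smaller constant than $\alpha^4$ and note the $\ll$ allows it). This gives the $(\frac{1}{\alpha^2}, \alpha^4)$-connecting property. The main subtlety here is making sure $y$'s position in the cascade and the endpoint bookkeeping are handled cleanly when $y$ is itself one of the $X_j$'s rather than an arbitrary target; one just routes the path to terminate at $y$.

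\textbf{Part (ii).} Here we assume $\delta(G) \ge \eta n$ and the $(\frac{1}{\sqrt\alpha} - 1, \alpha)$-connecting property, and want to rule out an $\alpha^{k+1}$-sparse cut, where $k := \frac{1}{\sqrt\alpha} - 1$ is the bound on the number of internal vertices. The minimum degree part of $(\eta, \alpha^{k+1})$-robustness is immediate. Suppose for contradiction $\{X, Y\}$ is a partition with $e(X, Y) < \alpha^{k+1}|X||Y|$; by Observation \ref{simplerobust} (with the sparse-cut parameter $\alpha^{k+1} \le \eta/2$) we may assume $|X|, |Y| \ge \eta n / 2$. Pick $x \in X$, $y \in Y$. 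By the connecting property there is some $1 \le i \le k$ with $|\con_i(x,y)| \ge (\alpha n)^i$. Every $x,y$-path with $i$ internal vertices has $i+1$ edges, and since $x \in X$, $y \in Y$, at least one of those edges crosses the cut $(X,Y)$. The plan is to double-count: for each crossing edge $e = uv$ (with $u \in X$, $v \in Y$, say $e$ the $j$-th edge of the path), the number of $x,y$-paths with $i$ internal vertices using $e$ in position $j$ is at most the number of ways to choose the remaining $i-1$ internal vertices, which is at most $n^{i-1}$; summing over the $\le i+1 \le k+1$ positions and over all $e_{G}(X,Y)$ crossing edges gives
\[
(\alpha n)^i \le |\con_i(x,y)| \le (k+1)\, n^{i-1}\, e(X,Y) < (k+1)\, n^{i-1}\, \alpha^{k+1} |X||Y|.
\]
Since $|X||Y| \le n^2/4$ and $k + 1 = 1/\sqrt\alpha$, the right side is at most $\frac{1}{4}\alpha^{k+1} \alpha^{-1/2} n^{i+1} = \frac{1}{4}\alpha^{k + 1/2} n^{i+1}$, while the left side is $\alpha^i n^i$. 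Dividing by $n^i$ we would need $\alpha^i \le \frac{1}{4}\alpha^{k+1/2} n$, i.e. $n \ge 4\alpha^{i - k - 1/2}$; but this fails to give a contradiction directly — so the exponent bookkeeping must instead run the other way, and I would recheck: the point is that $i \le k$ so $\alpha^i \ge \alpha^k$, whereas we need $e(X,Y)$ small, so the clean contradiction comes from choosing the sparse-cut exponent exactly as $\alpha^{k+1}$ precisely so that $(\alpha n)^i > (k+1) n^{i-1} \alpha^{k+1} \cdot n^2 / 4$ for all $i \le k$, which reduces to $\alpha^{i} > \frac{k+1}{4} \alpha^{k+1} n$, automatically true since $\alpha^i \ge \alpha^k \gg \alpha^{k+1} n \cdot \frac{k+1}{4}$ fails for large $n$ — so in fact one needs the sparse cut to be sufficiently sparse, and the exponent $k+1$ may need to be larger (of the form $\alpha^{2k}$ or so). I expect \textbf{this exponent calculation to be the main obstacle}: getting the power of $\alpha$ in the robustness conclusion to match what the double-counting actually yields, and confirming the $n$-dependence works out (it should, since a genuinely sparse cut forces $e(X,Y) = o(n^2)$ while the path count forces $e(X,Y) = \Omega(n^2)$ — the two are incompatible once the constants are pinned down, and the $\frac{1}{n_0} \ll \alpha$ convention absorbs the rest).

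Overall, Part (i) is a routine greedy path-extension argument powered by the cascade lemma, and Part (ii) is a double-counting argument; the only real care needed is in the choice and verification of the $\alpha$-exponents so that the stated parameters are consistent.
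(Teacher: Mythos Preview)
Your Part (i) is essentially correct and close in spirit to the paper's argument, though the paper takes a slightly cleaner route: rather than locating $y$ itself in some layer $X_t$ of the cascade (which forces the awkward $t=1$ edge case you flag), the paper uses $\delta(G) \geq \eta n$ and pigeonhole to find a layer $X_j$ with $|N(y) \cap X_j| \geq (\eta n - 1)/k$, and then counts paths $x, v_1, \ldots, v_j, y$ by walking back from $N(y)\cap X_j$ through $X_{j-1},\dots,X_1$ to $x$. This automatically yields $j \geq 1$ internal vertices and avoids the special case.

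Your Part (ii), however, has a genuine gap that is \emph{not} merely exponent bookkeeping. Fixing a \emph{single} pair $x \in X$, $y \in Y$ and double-counting over crossing edges gives only
\[
(\alpha n)^i \;\leq\; |\con_i(x,y)| \;\leq\; (i+1)\, n^{i-1}\, e(X,Y) \;<\; (i+1)\, n^{i-1} \cdot \alpha^{k+1}|X||Y|,
\]
and since $|X||Y|$ is of order $n^2$, the right side is of order $\alpha^{k+1} n^{i+1}$ while the left is $\alpha^i n^i$; the extra factor of $n$ on the right means no contradiction arises for large $n$, \emph{regardless} of how you tune the exponent of $\alpha$. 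This is exactly the obstruction you ran into, and it cannot be repaired by adjusting exponents.

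The paper's fix is to sum over \emph{all} pairs $(x,y) \in X \times Y$. By pigeonhole, at least $|X||Y|/k'$ pairs share a common value of $i$ (call it $k$), giving at least $\tfrac{1}{k'}|X||Y|(\alpha n)^k$ paths in $\mathcal{P}_k(X,Y)$, the set of paths with $k$ internal vertices, first vertex in $X$, last in $Y$. On the other side, each crossing edge lies in at most $(k+1)n^k$ such paths (now the endpoints are \emph{not} fixed, so one chooses $k$ further vertices and a position for the edge), yielding $|\mathcal{P}_k(X,Y)| < \alpha^{k+1}|X||Y|(k+1)n^k$. Both sides now carry the factor $|X||Y|$, and the comparison $\tfrac{\alpha^k}{k'}$ versus $(k+1)\alpha^{k+1}$ is a genuine contradiction precisely when $(k+1)^2 \alpha \leq 1$, which is what $k+1 \leq 1/\sqrt{\alpha}$ guarantees. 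The missing idea is this averaging over pairs, not the arithmetic.
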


\begin{proof}
(i) First suppose $G$ is $(\eta, \alpha)$-robust and let $x,y\in V(G)$.  By Lemma \ref{neighborcascade}, there exists a spanning $(k, \alpha^2)$-neighborhood cascade of $x$, say $\{X_1,\dots, X_k\}$ with 
\begin{equation}\label{cascadeproperty}
1\leq k\leq \frac{1}{\alpha^2}-1 ~~\text{ and }~~ \delta(X_{i+1}, X_{i})\geq \alpha^2 n/k ~\text{ for all } 1\leq i\leq k-1.
\end{equation}
Since $\delta(G)\geq \eta n$, there exists some $1\leq j\leq k$ such that $|N(y)\cap X_j|\geq (\eta n-1)/k$.  By \eqref{cascadeproperty} we have $$|\con_j(x,y)|\geq \frac{\eta n-1}{k}\cdot \left(\frac{\alpha^2 n}{k}\right)^{j-1}\geq (\alpha^4n)^j.$$

(ii) Suppose $\delta(G)\geq \eta n$ and for all $x,y\in V(G)$, there exists some $1\leq k\leq \frac{1}{\sqrt{\alpha}}-1$ such that there are at least $(\alpha n)^k$ $x,y$-paths having $k$ internal vertices. Suppose for a contradiction that $G$ is not $(\eta, \alpha^{k+1})$-robust.  Since $\delta(G)\geq \eta n$, this implies $G$ has an $\alpha^{k+1}$ sparse cut $(X,Y)$.  

Set $k':= \floor{\frac{1}{\sqrt{\alpha}}}-1$.  Note that there exists some $1\leq k\leq k'$ such that at least $|X||Y|/k'$ of the pairs $(x,y)$ with $x\in X$ and $y\in Y$ have at least $(\alpha n)^k$ $x,y$-paths having $k$ internal vertices.
Let $\mathcal{P}_{k}(X,Y)$ be the set of all paths having $k$ internal vertices with the first vertex in $X$ and the last vertex in $Y$.  So 
\begin{equation}\label{PXY}
|\mathcal{P}_k(X,Y)|\geq \frac{1}{k'}|X||Y|(\alpha n)^k.
\end{equation}  

Each path in $\mathcal{P}_k(X,Y)$ uses at least one edge from $E(X,Y)$, so for each $uv\in E(X,Y)$ with $u\in X$ and $v\in Y$, there are at most $(k+1)(n-2)(n-3)\cdots(n-2-k+1)<(k+1)n^k$ paths $P\in \mathcal{P}_k(X,Y)$ in which $uv$ is the first edge from $E(X,Y)$ to appear on $P$ (as such a path has $k+1$ edges and $uv$ can appear in any of those $k+1$ positions).  Thus $$|\mathcal{P}_k(X,Y)|< e(X,Y)(k+1)n^k< \alpha^{k+1}|X||Y|(k+1)n^k\leq \frac{1}{k+1}|X||Y|(\alpha n)^k,$$
contradicting \eqref{PXY}. 
\end{proof}

\subsection{Absorbing}\label{section:absorbing}

\begin{definition}[Absorbing Property]
Let $G$ be a graph on $n$ vertices.
\begin{enumerate}
\item We say $G$ has the $(2\ell,\alpha)$-vertex-absorbing property if for all $v\in V(G)$ there exists $2i\leq 2\ell$ such that $v$ is contained in at least $(\alpha n)^{2i}$ cycles of length $2i+1$.

\item We say that $G$ has the $(4\ell,\alpha)$-pair-absorbing property if $G$ contains a spanning bipartite subgraph $H=G[X,Y]$ such that for all $x\in X$ and $y\in Y$ there exists $4i\leq 4\ell$ such that there are at least $(\alpha n)^{4i}$ cycles of length $4i+2$ in $H$ containing $x$ and $y$ in which $x$ and $y$ are at distance $2i+1$ on the cycle (in other words there are $2i$ internal vertices between $x$ and $y$ in either direction on the cycle).
\end{enumerate}
\end{definition}

\begin{lemma}[Absorbing Lemma]\label{lemma:absorbing}
Let $\frac{1}{n_0}\ll \alpha\ll \eta$, set $\rho:=\alpha^{32/\alpha^2}$, and suppose $G$ is an $(\eta, \alpha)$-robust graph on $n\geq n_0$ vertices.  
\begin{enumerate}
\item If $G$ is not $\alpha^4$-near-bipartite, then there exists a path $P^*$ of length at most $\rho n$, such that for all $W\subseteq V(G)\setminus V(P^*)$ with $|W|\leq \rho^3 n$, the subgraph $G[V(P^*)\cup W]$ contains a spanning path having the same endpoints as $P^*$.  

\item If $G$ is $\alpha^4$-near-bipartite, then $G$ has a spanning bipartite subgraph $H=G[X,Y]$ such that $H$ is $(\eta/2,\alpha/2)$-robust and contains a path $P^*$ of length at most $\rho n$, such that for all $W\subseteq V(G)\setminus V(P^*)$ with $|W\cap X|=|W\cap Y|\leq \rho^3 n$, the subgraph $G[V(P^*)\cup W]$ contains a spanning path having the same endpoints as $P^*$.
\end{enumerate}
\end{lemma}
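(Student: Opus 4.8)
The plan is to build the absorbing path $P^*$ by concatenating many small "absorbing gadgets," one designed to swallow a single vertex (in case (i)) or a single balanced pair of vertices (in case (ii)). For case (i), the key object is an \emph{absorber for $v$}: a short path $Q_v$ with fixed endpoints such that both $Q_v$ and $Q_v + v$ (the graph on $V(Q_v)\cup\{v\}$) contain spanning paths with those same endpoints. The standard way to get such a $Q_v$ is from an odd cycle through $v$ of bounded length: if $v$ lies on a cycle $v\,a_1 a_2\cdots a_{2i}\,v$ of length $2i+1$, then the path $a_1 a_2 \cdots a_{2i}$ absorbs $v$ (delete $v$: keep the path $a_1\cdots a_{2i}$; insert $v$: use $a_1\cdots v\cdots a_{2i}$ by splitting at any edge $a_j a_{j+1}$ with both $va_j, va_{j+1}\in E$, which exists on the odd cycle). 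So the first step is: \emph{show $G$ has the $(2\ell,\alpha')$-vertex-absorbing property} for suitable $\ell = O(1/\alpha^2)$ and $\alpha'$ polynomial in $\alpha$. This follows from Lemma \ref{neighborcascade} and Lemma \ref{connectingrobust}(i): robustness gives many short $x,y$-paths, and the fact that $G$ is \emph{not} $\alpha^4$-near-bipartite is precisely what forces, for each $v$, an \emph{odd} closed walk of bounded length through $v$ with many realizations (otherwise the "even distance" classes from $v$ would form a near-bipartition). Making this quantitative — "not near-bipartite $\Rightarrow$ many short odd cycles through every vertex" — is the first technical heart of the argument.

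Given the vertex-absorbing property, the second step is the \emph{random selection / gadget-assembly} argument of R\"odl--Ruci\'nski--Szemer\'edi. For each $v$, let $\mathcal{A}(v)$ be the family of its absorbers; we have $|\mathcal{A}(v)| \ge (\alpha' n)^{2i}$ for some $i\le \ell$, and each absorber uses $\le 2\ell$ vertices, so each vertex $u$ lies in at most $O(n^{2\ell-1})$ members of $\mathcal{A}(v)$. Choose a random set $\mathcal{F}$ of absorbers by including each candidate independently with probability $p = \Theta(n^{-2\ell+1}\cdot\text{small})$; a first-moment/Chernoff computation (Lemma above, Lemma \ref{markov}) shows that with positive probability (a) $|\mathcal{F}|$ is small, say $\le \rho^3 n / (4\ell)$, (b) the absorbers in $\mathcal{F}$ are pairwise vertex-disjoint after deleting a bounded number of bad ones, and (c) \emph{every} vertex $v$ is absorbed by at least $\rho^3 n$ members of $\mathcal{F}$. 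Then I string the chosen absorbers together into one path $P^*$ using the connecting property (Lemma \ref{connectingrobust}(i)): between consecutive gadget-endpoints, and avoiding the $O(\rho^3 n)$ vertices already used, robustness still holds by Observation \ref{slicerobust}, so a short connecting path exists; concatenating yields $P^*$ of length $\le \rho n$. Finally, given any $W$ with $|W|\le\rho^3 n$, process its vertices one at a time: each $w\in W$ still has an unused absorber in $P^*$ (we reserved $\rho^3 n \ge |W|$ of them per vertex, and at most $|W|$ get used up), so we splice each $w$ in, proving $G[V(P^*)\cup W]$ has a spanning path with the same endpoints.

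For case (ii), when $G$ \emph{is} $\alpha^4$-near-bipartite, I first invoke Observation \ref{bipartiterobust} (with $\beta := \alpha^2$, noting $\alpha^4 = \beta^2$ and checking $\beta \le (\alpha/2)^{3/2}/9$ after mild rescaling — or apply it with slightly adjusted constants) to extract the bipartition $\{X,Y\}$ with $H = G[X,Y]$ being $(\eta/2,\alpha/2)$-robust. Now the absorbers must preserve balance: a \emph{pair-absorber} for $(x,y)\in X\times Y$ is a short path $Q$ in $H$ with fixed endpoints such that $Q$ spans $V(Q)$ and $Q + \{x,y\}$ spans $V(Q)\cup\{x,y\}$ by paths with the same endpoints; these come from even cycles of length $4i+2$ through $x,y$ with $x,y$ at distance $2i+1$, which is exactly the $(4\ell,\cdot)$-pair-absorbing property. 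So the step here is: \emph{show the robust bipartite $H$ has the pair-absorbing property}. This again reduces, via a bipartite analogue of the neighborhood-cascade argument inside $H$, to finding many short $x,y$-paths in $H$ of the correct parity — and since $H$ is bipartite with $x\in X$, $y\in Y$, all $x,y$-paths automatically have odd length, so "length $2i+1$" is automatic and no further non-bipartiteness hypothesis is needed; robustness of $H$ supplies the count. With pair-absorbers in hand, the random-selection, connecting, and absorbing steps go through verbatim as in case (i), except that connecting paths inside $H$ must also have appropriate parity (automatic, since $H$ is bipartite) and the final $W$ is assumed balanced, $|W\cap X| = |W\cap Y|$, so balance is maintained throughout the insertion process.

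The main obstacle I expect is the first step of each case — turning the \emph{qualitative} "not near-bipartite" (resp. "$H$ is robust bipartite") into the \emph{quantitative} absorbing property with a polynomial-in-$\alpha$ density of gadgets per vertex. Concretely: showing that if every vertex failed to lie in $\ge (\alpha' n)^{2i}$ odd cycles of length $\le 2\ell+1$, then one could read off a set $X$ witnessing $e(X), e(V\setminus X) < \alpha^4 n^2$, contradicting non-near-bipartiteness. The cleanest route is probably to run the neighborhood cascade of Lemma \ref{neighborcascade} from $v$, track vertices by the parity of their cascade-level, and argue that either these two parity classes form a near-bipartition (using robustness to bound cross-class versus within-class edge counts) or there is a short odd cycle with many extensions; the connecting lemma then boosts "one odd cycle" to "many." Keeping the constants consistent — $\rho = \alpha^{32/\alpha^2}$ is chosen so that $\rho^3 n$ leftover vertices can be deleted while Observation \ref{slicerobust} still applies — is fiddly but routine once the structure is set up.
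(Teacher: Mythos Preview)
Your overall strategy matches the paper's: establish the vertex/pair-absorbing property via the neighborhood cascade and a parity argument (this is the paper's Proposition \ref{manygadgets}), randomly select a disjoint family of absorbers (Proposition \ref{generalabsorbing}), string them into $P^*$ with the connecting lemma, then absorb. Your sketch of ``not near-bipartite $\Rightarrow$ many short odd cycles through each vertex'' via the parity classes of the cascade is exactly the paper's route.

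There is, however, a genuine gap in your description of the absorber gadget. You claim that if $v\,a_1 a_2 \cdots a_{2i}\,v$ is an odd cycle then the path $a_1 a_2 \cdots a_{2i}$ absorbs $v$ by ``splitting at any edge $a_j a_{j+1}$ with both $va_j, va_{j+1}\in E$, which exists on the odd cycle.'' This is false for $i\ge 2$: the cycle only guarantees $va_1, va_{2i}\in E$, and $a_1, a_{2i}$ are not consecutive on the path, so no such $j$ need exist. Indeed, in the cycle $C_{2i+1}$ itself there is no Hamiltonian $a_1$--$a_{2i}$ path through $v$. The paper's fix is nontrivial: for each selected tuple $(v_1,\dots,v_{2i})$ with $i\ge 2$ it takes $v_2$ and $v_{2i}$ (not $v_1$ and $v_{2i}$) as the gadget's endpoints, and \emph{during the assembly of $P^*$} it uses the connecting lemma to add extra short internal paths --- from $v_{2j-1}$ to $v_{2j+2}$ for $1\le j\le i-2$ and from $v_{2i-3}$ to $v_{2i-1}$ --- so that the gadget supports both a default traversal and an absorbed traversal ending $\dots v_1\,x\,v_{2i}$, with the same endpoints (see Figure \ref{fig:absorb}). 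The bipartite case (ii) needs an analogous, slightly more elaborate, internal wiring. This extra structure is the missing idea in your plan; without it the gadgets coming from cycles of length $>3$ simply do not function as absorbers.

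A secondary issue: your random selection uses a single probability $p=\Theta(n^{-2\ell+1})$, but absorbers for different vertices may have different lengths $2i\le 2\ell$, and with this $p$ the expected number of selected $2i$-tuples is $o(1)$ whenever $i<\ell$. The paper explicitly flags this as the one place where absorbing sets of varying sizes appear, and handles it in Proposition \ref{generalabsorbing} by sampling $i$-tuples with a size-dependent probability $\rho_i=\Theta(n^{-(i-1)})$ and running the Chernoff/Markov estimates separately for each size class.
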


To prove Lemma \ref{lemma:absorbing}, we need the following two preliminary results.  Proposition \ref{manygadgets} is specific to this application and Proposition \ref{generalabsorbing} is the general machinery.  While many recent papers have used the absorbing lemma (notably \cite{RRSz} and \cite{LSSz}), we still need to provide a proof of Proposition \ref{generalabsorbing} here, as this is the only application (to our knowledge) where the absorbing sets have different sizes.  This issue requires a bit more care, although the idea is the same.

\begin{proposition}\label{manygadgets}
Let $0<\frac{1}{n_0}\ll\alpha\ll \eta$ and let $G$ be a graph on $n\geq n_0$ vertices.  If $G$ is $(\eta, \alpha)$-robust, then either $G$ has the $(2\floor{1/\alpha^2}, \alpha^4)$-vertex-absorbing property or $G$ is $\alpha^4$-near bipartite and $G$ has the $(4\ell, (\alpha/4)^4)$-pair-absorbing property for some integer $\ell$ with $\ell\leq 2/\alpha^2$.
\end{proposition}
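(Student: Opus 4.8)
The plan is to split on whether $G$ is $\alpha^4$-near-bipartite, building the required cycles in both cases by concatenating short paths. The supply of short paths comes from the neighbourhood cascade of a vertex (Lemma~\ref{neighborcascade}) that underlies the proof of Lemma~\ref{connectingrobust}: for any $x,y\in V(G)$ it produces an index $1\le j\le\floor{1/\alpha^2}-1$ together with at least $\frac{\eta}{2\alpha^2}(\alpha^4n)^{j}$ paths from $x$ to $y$ with exactly $j$ internal vertices. (The statement of Lemma~\ref{connectingrobust} only records the clean bound $(\alpha^4n)^{j}$ and the weaker bound $j\le 1/\alpha^2$; the sharpenings — the extra factor $\frac{\eta}{2\alpha^2}\gg1$, legitimate because $\eta\gg\alpha^2$, and $j\le\floor{1/\alpha^2}-1$ — are read off from its proof and are exactly what lets the estimates below close.)

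\textbf{Case 1: $G$ is $\alpha^4$-near-bipartite.} Set $\beta:=\alpha^{3/2}/9$, so $\alpha^4\le\beta^2$ and $G$ is $\beta^2$-near-bipartite; Observation~\ref{bipartiterobust} then supplies an $(\eta/2,\beta)$-bipartition $\{S_1,S_2\}$ with $H:=G[S_1,S_2]$ being $(\eta/2,\alpha/2)$-robust, and this spanning bipartite $H$ will witness pair-absorption. Applying Lemma~\ref{connectingrobust}(i) to $H$ (valid since $\eta/2\ge2(\alpha/2)^2$) gives, for each $x\in S_1$ and $y\in S_2$, an $i\le4/\alpha^2$ with $|\con_i^{H}(x,y)|\ge((\alpha/2)^4n)^{i}$; since $H$ is bipartite with $x,y$ on opposite sides every $x$–$y$ path has an even number of internal vertices, so $i=2i'$ for some $1\le i'\le2/\alpha^2$. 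Gluing two internally disjoint members of $\con_{2i'}^{H}(x,y)$ along $x$ and $y$ produces a cycle of length $4i'+2$ in $H$ on which $x$ and $y$ are antipodal, and conversely each such cycle arises from exactly one such pair; since for $n$ large at most half of the members of $\con_{2i'}^{H}(x,y)$ meet a given one internally, the number of these cycles is at least $\tfrac14|\con_{2i'}^{H}(x,y)|^{2}\ge\tfrac14((\alpha/2)^4n)^{4i'}\ge((\alpha/4)^4n)^{4i'}$, the last step being $16^{4i'}/4\ge1$. Taking $\ell:=\floor{2/\alpha^2}$, this is the $(4\ell,(\alpha/4)^4)$-pair-absorbing property, and as $G$ is near-bipartite by hypothesis the second alternative of the proposition holds.

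\textbf{Case 2: $G$ is not $\alpha^4$-near-bipartite.} Fix $v\in V(G)$; I show $v$ lies in many short odd cycles. For each $u\ne v$ let $j(u)\in\{1,\dots,\floor{1/\alpha^2}-1\}$ be an index as in the first paragraph and put $V_0:=\{u:j(u)\text{ even}\}$, $V_1:=\{u:j(u)\text{ odd}\}$. Since $G$ is not $\alpha^4$-near-bipartite, applying the definition to the partition $\{V_0\cup\{v\},V_1\}$ forces $e(V_0\cup\{v\})\ge\alpha^4n^2$ or $e(V_1)\ge\alpha^4n^2$; either way, discarding the at most $n$ edges at $v$ if necessary, I obtain $V^\ast\in\{V_0,V_1\}$ with $v\notin V^\ast$, $e(V^\ast)\ge\alpha^4n^2/2$, and $j(u)+j(u')$ even for every edge $uu'$ inside $V^\ast$. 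Pigeonholing over the fewer than $1/\alpha^2$ admissible even values of $j(u)+j(u')$ fixes an even $s$ realized by at least $\alpha^6n^2$ ordered pairs $(u,u')$ spanning edges of $V^\ast$. For each such pair, concatenating a $v$–$u$ path with $j(u)$ internal vertices, the edge $uu'$, and a $u'$–$v$ path with $j(u')$ internal vertices chosen internally disjoint from the first (which, both paths having at most $1/\alpha^2$ vertices, costs only a factor $\tfrac12$ for $n$ large) yields a cycle through $v$ of length $s+3$, which is odd (as $s$ is even) and satisfies $(s+3)-1\le2\floor{1/\alpha^2}$. Each cycle of length $s+3$ through $v$ is obtained at most $2(s+3)$ times, so $v$ lies in at least
\[
\frac{1}{2(s+3)}\cdot\alpha^{6}n^{2}\cdot\frac12\Bigl(\frac{\eta}{2\alpha^{2}}\Bigr)^{2}(\alpha^{4}n)^{s}\ \ge\ \frac{\eta^{2}\alpha^{4}}{C}\,n^{2}(\alpha^{4}n)^{s}\ \ge\ (\alpha^{4}n)^{s+2}
\]
cycles of length $s+3$, for an absolute constant $C$ (using $s+3\le3/\alpha^2$ for the first inequality and $\eta^{2}/C\ge\alpha^{4}$, which holds since $\alpha\ll\eta$, for the second). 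Writing $2i:=s+2\le2\floor{1/\alpha^2}$, this is the $(2\floor{1/\alpha^2},\alpha^4)$-vertex-absorbing property, and as $v$ was arbitrary the first alternative of the proposition holds.

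The main obstacle is the constant bookkeeping in Case~2: the naive estimate — pigeonhole the edge count, glue, count — loses several powers of $1/\alpha$ against the target $(\alpha^4n)^{2i}$, so the proof must genuinely exploit that the path counts exceed the clean bound $(\alpha^4n)^{j}$ by the factor $\frac{\eta}{2\alpha^2}\gg1$ (this $\gg$ is precisely where $\alpha\ll\eta$ is essential) and that the cascade index is at most $\floor{1/\alpha^2}-1$, so the cycle length does not overshoot $2\floor{1/\alpha^2}+1$. Case~1 needs no such care: squaring the path count already overshoots with an exponentially large margin, and the weaker constant $(\alpha/4)^4$ in the pair-absorbing conclusion is exactly what makes that case robust to the change of parameters from $(\eta,\alpha)$ to $(\eta/2,\alpha/2)$.
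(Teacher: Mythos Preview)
Your proof is correct and follows essentially the same two-case strategy as the paper. Case~1 (near-bipartite) is identical to the paper's argument. In Case~2 (not near-bipartite), you take a slight detour: instead of partitioning $V(G)\setminus\{v\}$ according to which cascade set $X_i$ each vertex lies in (as the paper does), you partition by the parity of the connecting index $j(u)$ --- the index where $N(u)$ meets the cascade significantly. The paper's partition gives, for $u\in X_i$, paths from $u$ to $v$ with $i-1$ internal vertices counted directly as $(\alpha^2 n/k)^{i-1}$, and after pigeonholing on the pair $(X_i,X_j)$ of same-parity sets the final estimate $\alpha^8 n^2(\alpha^2 n/k)^{i-1}(\alpha^2 n/k-1)^{j-1}\ge(\alpha^4 n)^{i+j}$ closes exactly, with no slack needed; your partition gives paths with $j(u)$ internal vertices counted as $\frac{\eta}{2\alpha^2}(\alpha^4 n)^{j(u)}$, and you genuinely need that extra $\eta/\alpha^2$ factor (extracted from inside the proof of Lemma~\ref{connectingrobust}, not its statement) together with an overcounting division by $2(s+3)$ to balance the books. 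Both routes are valid, but the paper's cascade-index partition is more direct and sidesteps precisely the constant bookkeeping you identified as the main obstacle.
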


\begin{proof}
Suppose $G$ is $(\eta, \alpha)$-robust.  
First assume that $G$ is not $\alpha^4$-near bipartite.
Let $x\in V(G)$; by Lemma \ref{neighborcascade}, there exists a spanning $(k, \alpha^2)$-neighborhood cascade of $x$, say $\{X_1, \dots, X_k\}$ with $k\leq \floor{\frac{1}{\alpha^2}}-1$.  Let $Y_1=\bigcup_{1\leq 2j+1\leq k}X_{2j+1}$ and $Y_2=\bigcup_{2\leq 2j\leq k}X_{2j}$ (i.e. $Y_1$ is the union of the odd indexed sets and $Y_2$ is the union of the even indexed sets). Since $G$ is not $\alpha^4$-near-bipartite, we may suppose without loss of generality that $e(Y_1)\geq \alpha^4 n^2$ or $e(Y_1 \cup \{x\})\geq \alpha^4 n^2$; in either case, we have $e(Y_1)\geq \alpha^4 n^2 -n$.

By the pigeonhole principle, there is some pair $X_i, X_j\subseteq Y_1$ (possibly $i=j$) such that $$e(X_{i}, X_{j})\geq \frac{\alpha^4 n^2 -n}{\binom{\ceiling{k/2}+1}{2}}\geq \frac{\alpha^4}{k^2}n^2\geq \alpha^8n^2.$$
Since $i$ and $j$ have the same parity by design, $t:=\frac{i+j}{2}\leq k\leq \floor{1/\alpha^2}-1$ is an integer.  Now since $\delta(X_{h+1}, X_{h})\geq \alpha^2 n/k$ for all $1\leq h\leq k-1$, we have that $x$ is contained in at least $$\alpha^8n^2\left(\frac{\alpha^2 n}{k}\right)^{i-1}\left(\frac{\alpha^2 n}{k}-1\right)^{j-1}\geq \alpha^{4i+4j}n^{i+j}=(\alpha^4n)^{2t}$$
cycles of length $2t+1$, thus $G$ has the $(2\floor{1/\alpha^2}, \alpha^4)$-vertex-absorbing property.

Now suppose $G$ is $\alpha^4$-near-bipartite.  By Observation \ref{bipartiterobust}, $G$ has  an $(\eta/2, \alpha^2)$-bipartition $\{X, Y\}$ such that $H:=G[X, Y]$ is $(\eta/2, \alpha/2)$-robust.  By Lemma 
\ref{connectingrobust}, $H$ has the $(4/\alpha^2, (\alpha/2)^4)$-connecting property, so for all $x\in X$ and $y\in Y$, there exists $2\leq 2i\leq 4/\alpha^2$ such that there are at least $((\alpha/2)^4 n)^{2i}$ paths with $2i$ internal vertices from $x$ to $y$.
For each such path, there are at least $((\alpha/2)^4 n)^{2i}-2in^{2i-1}\geq \frac{((\alpha/2)^4n)^{2i}}{2}$ paths which are vertex disjoint from the chosen path.  Thus there are at least $((\alpha/2)^4n)^{4i}/4\geq ((\alpha/4)^4n)^{4i}$ cycles containing $x$ and $y$ in which each path from $x$ to $y$ on the cycle has $2i$ internal vertices.
Thus $G$ has the $(4\ell, (\alpha/4)^4)$-pair-absorbing property where $2\ell$ is the largest even integer which is at most $4/\alpha^2$.
\end{proof}

While reading the following technical statement, it is useful to have some idea of how this will be applied in the proof of Lemma \ref{lemma:absorbing}.  For instance, in the non-bipartite case we have by Proposition \ref{manygadgets} that every vertex is contained in many short odd cycles; i.e. a positive proportion of $n^{2i}$ for some small enough $i$.  So in this case, the set $\cT$ will be the vertex set of the graph, the set $\cS_{2i}$ will consist of $(2i)$-tuples of vertices, and in the auxiliary bipartite graph we will put an edge from a vertex to a $(2i)$-tuple if these form a $(2i+1)$-cycle in the original graph.

\begin{proposition}\label{generalabsorbing}
Let $\ell$ be a positive integer, let $0<\sigma_1,\sigma_2, \dots,\sigma_\ell\leq 1$, and let $\sigma:=\min\{\sigma_1,\dots, \sigma_\ell\}$.  For each $1\leq i\leq \ell$, let $\cS_i$ be the collection of all $i$-tuples of distinct elements chosen from $[n]$, let $\cS\subseteq \bigcup_{i\in [\ell]}\cS_i$, and let $\cT$ be any set with $|\cT|\leq |\cS|$.  There exists $n_0$ such that the following holds: If $n\geq n_0$ and $\Gamma$ is an $(\cS,\cT)$-bipartite graph having the property that for all $u\in \cT$ there exists $i\in [\ell]$ such that $$\deg(u, \cS\cap \cS_i)\geq \sigma_i n^i,$$
then there exists a collection of disjoint sets $\cA^*\subseteq \cS$ such that 
$$|\cA^*\cap \cS_i|\leq \frac{\sigma n}{4 \ell^2} ~\text{ for all } i\in [\ell] ~\text{ and }~ \sum_{A\in \cA^*}|A|\leq \sigma n,$$
and for all $u\in \cT$ there exists $i\in [\ell]$ such that 
$$\deg(u, \cA^*\cap \cS_i)\geq \frac{\sigma^2}{32\ell^2} n,$$ 
and $\delta(\cA^*, \cT)\geq 1$.  
Consequently, for all $\cB\subseteq \cT$ with $|\cB|\leq \frac{\sigma^2}{32\ell^2} n$, the subgraph $\Gamma[\cA^*,\cB]$ contains a matching saturating $\cB$. 
\end{proposition}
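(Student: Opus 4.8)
The plan is to build $\cA^*$ by a random selection argument (the standard absorbing-set construction of Rödl–Ruciński–Szemerédi), but carried out separately in each ``level'' $\cS_i$ so that we can control the sizes $|\cA^*\cap\cS_i|$ individually. Concretely, for each $i\in[\ell]$ I would choose a random subset $\cA_i\subseteq\cS\cap\cS_i$ by including each $i$-tuple independently with probability $p_i:=\frac{\sigma}{8\ell^2 n^{i-1}}$, and then set $\cA':=\bigcup_{i\in[\ell]}\cA_i$. The point of the normalization $p_i\propto n^{-(i-1)}$ is that $|\cS_i|\le n^i$, so $\mathbb{E}|\cA_i|\le p_i n^i=\frac{\sigma n}{8\ell^2}$; and for a vertex $u\in\cT$ with $\deg(u,\cS\cap\cS_i)\ge\sigma_i n^i\ge\sigma n^i$, the expected degree into $\cA_i$ is at least $p_i\sigma n^i=\frac{\sigma^2 n}{8\ell^2}$.

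The key steps, in order, would be: (1) compute the three relevant expectations — the size $|\cA_i|$, the total weight $\sum_{A\in\cA'}|A|=\sum_i i\,|\cA_i|$, and, for the one relevant level $i(u)$ per vertex $u$, the degree $\deg(u,\cA_i\cap\cS_i)$; (2) apply the Chernoff bound of the excerpt to each of these (the size and weight are sums of independent indicators, hence binomial; the degree likewise) to conclude that with positive probability we simultaneously have $|\cA_i|\le\frac{\sigma n}{4\ell^2}$ for every $i$, $\sum_{A\in\cA'}|A|\le\sigma n$ (say by bounding $\sum_i i|\cA_i|\le \ell\cdot\max_i|\cA_i|\cdot\ell$ — actually better, bound $\sum_i|\cA_i|\le \sum_i 2\mathbb{E}|\cA_i| \le \sigma n/(4\ell)$ and multiply by $\ell$), and $\deg(u,\cA_{i(u)}\cap\cS_{i(u)})\ge\frac{\sigma^2 n}{16\ell^2}$ for every $u\in\cT$; here a union bound over the $\le n^\ell$ choices of tuple and the $\le n$ vertices $u$ is absorbed by the exponential decay since each expectation is $\Omega(n)$ and $n\ge n_0$ is large; (3) delete offending elements to fix up the two remaining requirements — remove from $\cA'$ all tuples that are \emph{not} pairwise disjoint (i.e. share an element with another tuple in $\cA'$), and remove any tuple not incident to the chosen level of some $u$, to cut the count down to at most $\sigma^2 n/(32\ell^2)$ per level while keeping $\delta(\cA^*,\cT)\ge 1$; the number of non-disjoint tuples is small in expectation (bounded by counting ordered pairs of tuples sharing an element, which is $O(\mathbb{E}[(\sum|\cA_i|)^2]\cdot \ell^2/n)$ by the same $n^{-(i-1)}$ scaling), so this does not destroy the degree lower bounds — we lose only a lower-order term, so the degrees remain $\ge\frac{\sigma^2 n}{32\ell^2}$. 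Finally, the consequence about matchings is immediate from Hall's theorem: for $\cB\subseteq\cT$ with $|\cB|\le\frac{\sigma^2}{32\ell^2}n$, any subset $\cB'\subseteq\cB$ has neighborhood (inside $\cA^*$) of size at least $\deg(u,\cA^*)\ge\frac{\sigma^2}{32\ell^2}n\ge|\cB|\ge|\cB'|$ for any single $u\in\cB'$, so Hall's condition holds trivially.

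The main obstacle will be step (3): making precise that the ``disjointification'' deletions are few enough not to kill the per-vertex degree guarantee, while still achieving the genuinely small target $\frac{\sigma^2 n}{32\ell^2}$ on $|\cA^*\cap\cS_i|$. The subtlety, flagged in the excerpt, is that the tuples have varying lengths $1,\dots,\ell$, so a collision can occur between a tuple in $\cS_i$ and one in $\cS_j$ with $i\ne j$; one must check that the expected number of such collisions is $O(1/n)$ times a product of the two expected sizes — which works precisely because $p_i p_j n^{i+j-1}=\frac{\sigma^2}{64\ell^4}n$, a lower-order quantity compared to the $\Theta(n)$-sized degrees — and then use Markov to delete all collision-involved tuples. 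I would also need to be a little careful that after deleting tuples to enforce $|\cA^*\cap\cS_i|\le\frac{\sigma^2 n}{32\ell^2}$, I delete them ``evenly'' so as not to wipe out the neighborhood of any particular $u$; the clean way is to note that we only need each $u$ to retain \emph{one} neighbor for $\delta(\cA^*,\cT)\ge1$, so I first greedily pick one private neighbor per $u$ (at most $|\cT|\le|\cS|$ of them, but we can be wasteful and keep at most $\frac{\sigma^2 n}{32\ell^2}$ per level that still cover all $u$ via the degree bound), then discard the rest down to the size cap. The rest — the expectation computations and Chernoff applications — is routine.
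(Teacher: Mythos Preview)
Your core approach is exactly the paper's: sample each $i$-tuple in $\cS\cap\cS_i$ independently with probability $p_i=\frac{\sigma}{8\ell^2 n^{i-1}}$, use Chernoff for the size bounds $|\cA_i|\le\frac{\sigma n}{4\ell^2}$ and the per-vertex degree bounds $\deg(u,\cA_{i(u)})\ge\frac{\sigma^2 n}{16\ell^2}$, bound the expected number of intersecting pairs by $O(\sigma^2 n/\ell^2)$ and use Markov, then delete one tuple from each intersecting pair. The matching at the end is the paper's greedy argument, which is your Hall argument in different words.

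Where you go off the rails is in step (3), and the reason is two misreadings of the statement. First, the required size bound is $|\cA^*\cap\cS_i|\le\frac{\sigma n}{4\ell^2}$, \emph{not} $\frac{\sigma^2 n}{32\ell^2}$; the latter is the degree lower bound, not a size cap. Your Chernoff step already gives $|\cA_i|\le\frac{\sigma n}{4\ell^2}$ and subsequent deletions only shrink it, so there is nothing further to do --- no ``even deletion'' or size-cap trimming is needed at all. Second, $\delta(\cA^*,\cT)\ge1$ means every \emph{tuple} in $\cA^*$ has a neighbour in $\cT$, not that every $u\in\cT$ retains a neighbour in $\cA^*$ (recall the paper's convention that $\delta(A,B)$ is the minimum degree from $A$ into $B$). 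So to enforce it you simply discard any tuple with no neighbour in $\cT$; since every neighbour of a given $u$ automatically survives this test, $\deg(u,\cA^*)$ is unchanged by this deletion. Your ``private neighbour per $u$'' manoeuvre is solving the wrong problem and is not needed.

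Once you correct these two readings, your step (3) collapses to: delete one tuple from each intersecting pair (at most $\frac{\sigma^2 n}{32\ell^2}$ deletions by Markov, so degrees stay $\ge\frac{\sigma^2 n}{32\ell^2}$), then delete tuples isolated from $\cT$. That is precisely what the paper does.
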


\begin{proof}
We will show that a randomly chosen subset of $\cS$ will almost surely satisfy all the properties that $\cA^*$ must satisfy.  Then by deleting some elements from the randomly chosen set, we will obtain the actual set $\cA^*$ which has all of the desired properties.

For all $1\leq i\leq \ell$, set $\rho_{i}:=\frac{\sigma}{8\ell^2n^{i-1}}$.
Let $\cA^{(i)}$ be a randomly chosen subset of $\cS_i$ where each each element is chosen independently with probability $\rho_i$ and let $\cA$ be the union of $\cA^{(i)}$ over all $i$.  We note several basic properties of $\cA$ (due to the Chernoff inequality together with the union bound, unless otherwise indicated):  
\begin{itemize}
\item  
With probability at least $1 - \exp \{- n/\log n\}$ we have for all $1\leq i\leq \ell$, 
\begin{equation}\label{Asize}
|\cA^{(i)}|\leq 2\rho_i n^i = \frac{\sigma}{4\ell^2} n ; 
\end{equation}
and thus 
$$\sum_{A\in \cA}|A|= \sum_{i=1}^\ell i\cdot |\cA^{(i)}| \leq \frac{\sigma}{4\ell^2} n\sum_{i=1}^\ell i \leq \frac{\sigma n}{4}\leq \sigma n.$$
\item  
Let $\cA \otimes \cA = \{(S_1, S_2) \in \cA \times \cA: S_1 \cap S_2 \neq \emptyset\}$.  
Then
$$
\bbE \left[ |\cA \otimes \cA| \right] \leq  \sum_{i=1}^\ell \rho_in^i\cdot i\cdot\rho_in^{i-1}\leq \frac{\sigma^2n}{64\ell^4}\sum_{i=1}^\ell i\leq \frac{\sigma^2n}{64\ell^2}.
$$
So by Markov's inequality, 
$$
\Pr \left[ |\cA \otimes \cA| \geq \frac{\sigma^2 n}{32\ell^2}\right] \leq 1/2,   
$$
and thus with probability at least $1/2$, $\cA$ has the property that 
\begin{equation}\label{Aintersect}
|\cA \otimes \cA|<\frac{\sigma^2 n}{64\ell^2}.
\end{equation}

\item For all $u\in \cT$, there exists $i\in [\ell]$ such that $\deg(u, \cS)\geq \sigma_i n^i$.  So with probability at least $1 - \exp \{- n/\log n\}$ we have
\begin{equation}\label{Adegree}
\deg(u, \cA) \geq \frac{1}{2} \rho_i \cdot \sigma_i n^i \geq \frac{\sigma\sigma_i}{16\ell^2} n\geq \frac{\sigma^2}{16\ell^2} n.  
\end{equation}
\end{itemize}

Let $\cA''$ be a subset of $\cS$ for which properties~\eqref{Asize}, \eqref{Aintersect}, and \eqref{Adegree} hold. Now, in every pair of intersecting sets $(S_1, S_2)$ in $\cA''$, delete one of $S_1$ or $S_2$ and let $\mathcal{A}'$ be the resulting set.  By properties \eqref{Aintersect} and \eqref{Adegree}, we have for all $u\in \cT$, there exists $i\in [\ell]$ such that $$\deg(u, \cA') \geq 
\frac{\sigma^2}{16\ell^2} n-\frac{\sigma^2}{32\ell^2} n = \frac{\sigma^2}{32\ell^2}n.$$
Let $\cA^*\subseteq \cA'$ be a maximal subset having the property that $\delta(\cA^*, \cT)\geq 1$ and note that by maximality we still have $\deg(u, \cA^*)\geq\frac{\sigma^2}{32\ell^2}n$.

So for all $\cB\subseteq \cT$ with $|\cB|\leq \frac{\sigma^2}{32\ell^2}n$, we can greedily choose a matching in $\Gamma[\cA^*,\cB]$ which saturates $\cB$.
\end{proof}

\begin{proof}[Proof of Lemma \ref{lemma:absorbing}]
By Proposition \ref{manygadgets}, $G$ either has the $(2\floor{1/\alpha^2},\alpha^4)$-vertex-absorbing property or $G$ is $\alpha^4$-near-bipartite and has the $(4\ell, (\alpha/4)^4)$-pair-absorbing property with $4\ell\leq 8/\alpha^2$.  Suppose first that $G$ has the $(2\floor{1/\alpha^2}, \alpha^4)$-vertex-absorbing property.

Set $2\ell:=2\floor{1/\alpha^2}$, for all $i\in [2\ell]$ set $\sigma_{i}:=(\alpha^4)^{i}$, and set $\sigma:=(\alpha^4)^{2\ell}$. Set $\cT=V(G)$ and $\cS=\{S\in \cS_{2i}: 1\leq i\leq \ell\}$ (recall $\cS_i$ is the set of $i$-tuples of vertices) and let $\Gamma$ be an auxiliary $\cS, \cT$-bipartite graph where $ST$ is an edge if and only if $(v_1,\dots, v_{2i})=S\in \cS$, $x=T\in \cT$, and $xv_1\dots v_{2i}x$ is a cycle of length $2i+1$ in $G$ (i.e. $S$ ``absorbs" $T$).  Since $G$ has the $(2\floor{1/\alpha^2}, \alpha^4)$-vertex-absorbing property, for all $T\in \cT$ we have $\deg(T, \cS\cap \cS_{2i})\geq (\alpha^4n)^{2i}=\sigma_{2i}n^{2i}$ for some $i\in [\ell]$.  So applying Proposition \ref{generalabsorbing} to $\Gamma$ with the parameters $2\ell, \sigma_1,\dots, \sigma_{2\ell},\cS$ defined above, we get a set $\cA^*$ having the stated properties.  Now we will show how to turn $\cA^*$ into the desired path $P^*$.

\begin{figure}[ht]
\centering
\subfloat[]{\label{fig:absorb_a}
\scalebox{.6}{\input{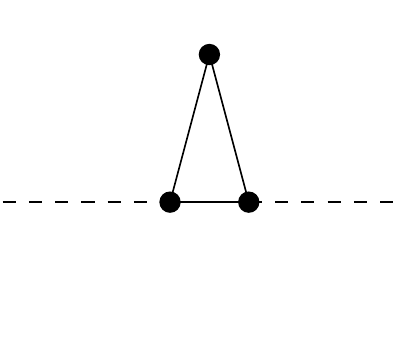_t}}~~~~~~~~
}
\subfloat[]{\label{fig:absorb_b}
\scalebox{.6}{\input{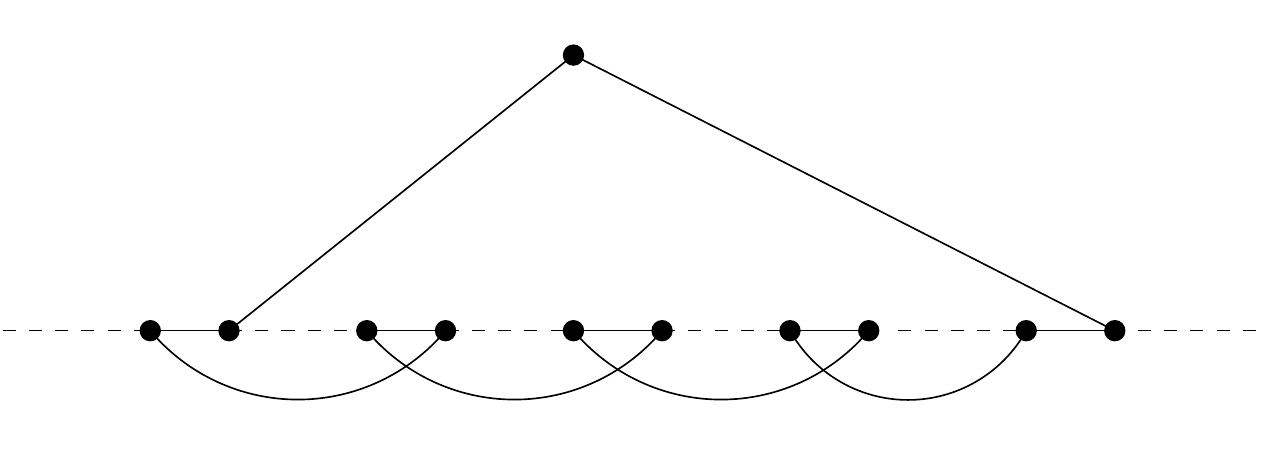_t}}
}
\hspace{.01in}
\subfloat[]{\label{fig:absorb_c}
\scalebox{.6}{\input{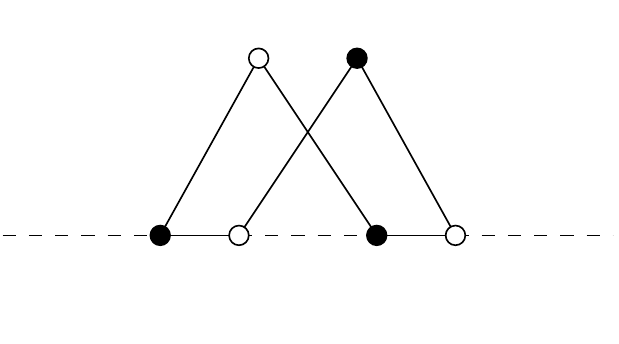_t}}~~~~~
}
\subfloat[]{\label{fig:absorb_d}
\scalebox{.6}{\input{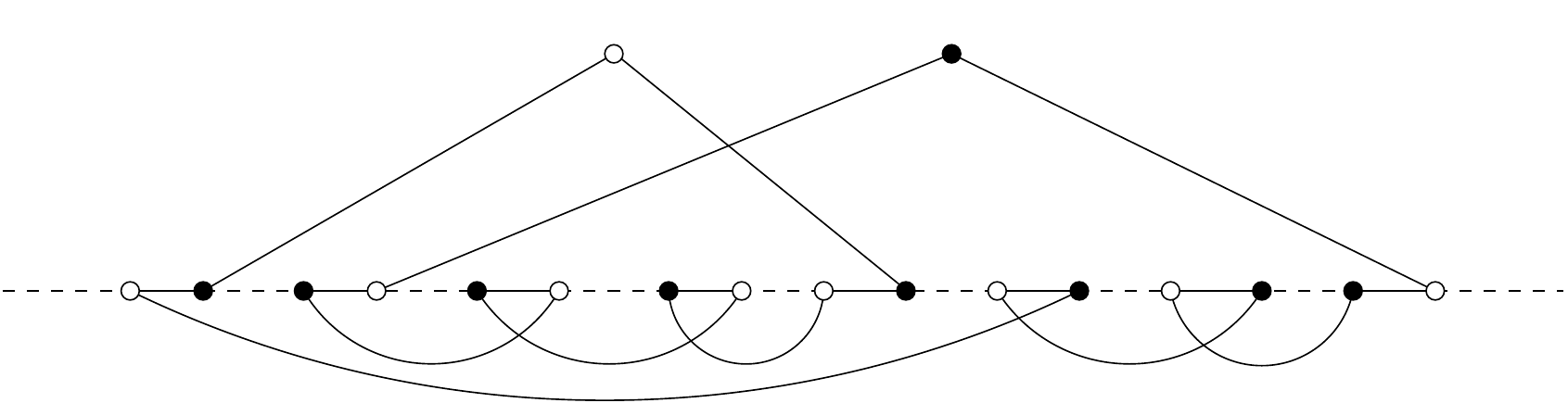_t}}
}
\caption[]{Absorbing vertices and pairs of vertices into cycle segments}\label{fig:absorb}
\end{figure}

For each element $(v_1,\dots, v_{2i})=A\in \cA^*$ we let $P(A)=v_1v_2\dots v_{2i-1}v_{2i}$ be the corresponding path in $G$.  Consider some ordering $A^1,\dots, A^t$ of the elements in $\cA^*$ and suppose we considered all elements up to $A^s=(v_1,\dots, v_{2i})$ where $z^*_{s-1}$ is the last vertex of $A^{s-1}$.  If $i=1$, we set $a^*_s:=v_1$ and $z^*_s:=v_2$ (see Figure \ref{fig:absorb_a}).  If $i\geq 2$ we set $a^*_s:=v_2$, set $z^*_s:=v_{2i}$ and use Lemma \ref{connectingrobust} to build paths of length at most $\ell$ (avoiding all previously used vertices) from $v_{2j-1}$ to $v_{2j+2}$ for all $1\leq j\leq i-2$ and a path from $v_{2i-3}$ to $v_{2i-1}$ (see Figure \ref{fig:absorb_b}).  Finally we build a path from $z^*_{s-1}$ to $a^*_s$ (i.e. the last vertex of $A^{s-1}$ to the first vertex of $A^s$).  So for each $(2i)$-tuple $A\in \cA^*$ we will use at most $2i+i\ell\leq 2i\ell$ vertices and there are at most $\frac{\sigma}{4(2\ell)^2} n$ elements of order $2i$ in $\cA^*$ for each $1\leq i\leq \ell$.  So we have built a path $P^*$ using at most $\sum_{i=1}^\ell 2i\ell\frac{\sigma}{4(2\ell)^2} n\leq \frac{(\ell+1)\sigma}{16}n\leq \rho n$ vertices. 

Now let $W\subseteq V(G)\setminus V(P^*)$ with $|W|\leq \rho^3n\leq \frac{\sigma^2n}{32(2\ell)^2}$.  By Proposition \ref{generalabsorbing}, there is a matching between vertices in $W$ and elements in $\cA^*$ which saturates $W$.  For each $x \in W$, let $A(x) = (v_1, \dots, v_{2i})$ be the element in $\mathcal{A}^*$ matched to $x$.  If $i=1$, then $P^*v_1xv_2P^*$ allows us to insert $x$.  If $i \ge 2$, then $$P^* v_2v_3 \dots v_6v_7 \dots v_{2i-6}v_{2i-5} \dots v_{2i-2}v_{2i-1} \dots v_{2i-3}v_{2i-4} \dots v_9v_8 \dots v_5v_4 \dots v_1xv_{2i} P^*$$ allows us to insert $x$ if $i$ is even, and $$P^* v_2v_3 \dots v_6v_7 \dots v_{2i-4}v_{2i-3} \dots v_{2i-1}v_{2i-2} \dots v_{2i-5}v_{2i-6} \dots v_9v_8 \dots v_5v_4 \dots v_1xv_{2i} P^*$$ allows us to insert $x$ if $i$ is odd.  Since inserting a vertex $x$ rearranges only the internal vertices in the subpath of $P^*$ induced by $A(x)$ to form a new path segment leaving the rest of $P^*$ untouched, we see that $G[V(P^*)\cup W)]$ contains a spanning path having the same endpoints as $P^*$.

Now suppose that $G$ is $\alpha^4$-near-bipartite and has the $(4 \ell, (\alpha/4)^4)$-pair-absorbing property with $4\ell\leq 8/\alpha^2$ which is witnessed by a bipartition $\{ X,Y\}$ such that $H := G[X,Y]$ is $(\eta/2, \alpha/2)$-robust.

For all $i\in [4\ell]$ set $\sigma_{i}:=((\alpha/4)^4)^{i}$, and set $\sigma:=((\alpha/4)^4)^{4\ell}$.  Set $\cT=\{(x,y):x\in X, y\in Y\}$ and $\cS=\{S\in \cS_{4j}: 1\leq j\leq \ell\}$ (recall $\cS_i$ is the set of $i$-tuples of vertices) and let $\Gamma$ be an auxiliary $\cS, \cT$-bipartite graph where $ST$ is an edge if and only if $(a_1, b_1,\dots, a_j,b_j,u_j,v_j, \dots, u_1, v_1)=S\in \cS$, $(x,y)=T\in \cT$, and $$xa_1b_1a_2b_2\dots a_jb_jyv_ju_j\dots v_2u_2v_1u_1x$$ is a cycle of length $4j+2$ in $H$ (i.e. $S$ ``absorbs'' $T$).  Since $H$ has the $(4\ell,(\alpha/4)^4)$-pair-absorbing property, for all $T\in \cT$ we have $\deg(T, \cS\cap \cS_{4j})\geq ((\alpha/4)^4n)^{4j}$ for some $j\in [\ell]$.  So applying Lemma \ref{generalabsorbing} to $\Gamma$ with the parameters $4\ell, \sigma_1,\dots, \sigma_{4\ell},\cS$ defined above, we get a set $\cA^*$ having the stated properties. Now we will show how to turn the set $\cA^*$ into the desired path $P^*$.

Note that each element $A\in \cA^*$ consists of two odd length paths $a_1b_1 \dots a_jb_j$ and $u_1v_1 \dots u_jv_j$.  
Consider some ordering $A^1,\dots, A^t$ of the elements in $\cA^*$ and suppose we considered all elements up to $A^s=(a_1, b_1, \dots, a_j, b_j, u_j, v_j, \dots, u_1, v_1)$ where $z^*_{s-1}$ is the last vertex of $A^{s-1}$.
If $j=1$, we set $a_s^*:=a_1$, set $z^*_{s}:=v_1$, and use Lemma \ref{connectingrobust} (avoiding all previously used vertices) to build a path from $b_1$ to $u_1$ (see Figure \ref{fig:absorb_c})). 
If $j\geq 2$ we set $a^*_s:=b_1$ and $z^*_s:=b_j$ and we use Lemma \ref{connectingrobust} to build paths of length at most $\ell$ from 
$a_1$ to $u_j$, from $a_h$ to $b_{h+1}$ for $2\leq h\leq j-2$, from $a_{j-1}$ to $a_j$, from $v_h$ to $u_{h-1}$ for all $3\leq h\leq j$, from $v_2$ to $v_1$, and from $u_1$ to $b_2$ (see Figure \ref{fig:absorb_d}). Finally, we build a path from $z^*_{s-1}$ to $a^*_s$ (i.e. the last vertex of $A^{s-1}$ to the first vertex of $A^s$). So for each $(4j)$-tuple $A\in \cA^*$ we will use at most $4j+2j\ell\leq 4j\ell$ vertices and there are at most $\frac{\sigma}{4(4\ell)^2} n$ elements of order $4j$ in $\cA^*$ for each $1\leq j\leq \ell$.   So we have built a path $P^*$ using at most $\sum_{j=1}^{\ell} 4j\ell\frac{\sigma}{4(4\ell)^2} n\leq \frac{(\ell+1)\sigma}{32}n\leq \rho n$ vertices. 

Now let $W\subseteq V(G)\setminus V(P^*)$ with $|W\cap X|=|W\cap Y|\leq \rho^3n\leq \frac{\sigma^2n}{32(4\ell)^2}$.  Take an arbitrary partition $\cW_2$ of $W$ into sets of size $2$ such that each member of $\cW_2$ contains one point from $W\cap X$ and one point from $W\cap Y$.  By Proposition \ref{generalabsorbing}, there is a matching between $\cW_2$ and elements in $\cA^*$ which saturates $\cW_2$.  For each $\{x,y\} \in \mathcal{W}_2$, let $A(x,y) = (a_1,b_1, \dots, a_j,b_j,u_j,v_j, \dots, u_1,v_1)$ be the element in $\mathcal{A}^*$ matched to $\{x,y\}$.  If $j=1$, then $P^*a_1xu_1\dots b_1yv_1P^*$ allows us to insert $x$ and $y$.  If $j \ge 2$, then $$P^* b_1a_2 \dots b_{j-1}a_j \dots a_{j-1}b_{j-2} \dots a_3b_2 \dots u_1xa_1 \dots u_jv_{j-1} \dots u_2v_1 \dots v_2u_3 \dots v_jyb_j P^*$$ allows us to insert $x$ and $y$ if $j$ is even, and $$P^* b_1a_2 \dots b_{j-2}a_{j-1} \dots a_jb_{j-1} \dots a_3b_2 \dots u_1xa_1 \dots u_jv_{j-1} \dots u_3v_2 \dots v_1u_2 \dots v_jyb_j P^*$$ allows us to insert $x$ and $y$ if $j$ is odd.  Since inserting a pair $\{x,y\}$ rearranges only the internal vertices in the subpath of $P^*$ induced by $A(x,y)$ to form a new path segment leaving the rest of $P^*$ untouched, we see that $G[V(P^*)\cup W)]$ contains a spanning path having the same endpoints as $P^*$.
\end{proof}

\section{Robust component structure }\label{sec:structure}


\begin{definition}[$(\eta, \alpha)$-nice partition]\label{nicepartition}
Let $0<\alpha\leq \eta/2$ and let $G$ be a $r$-colored graph.  For each $i\in [r]$, let $\mathcal{H}_i$ be a (possibly empty) set of vertex disjoint $(\eta, \alpha)$-robust subgraphs of $G_i$, and let $\mathcal{H}=\bigcup_{i\in [r]}\mathcal{H}_i$.  We say that $\mathcal{H}$ is an $(\eta, \alpha)$-nice partition of $G$ if 
\begin{enumerate}
\item $V(G)=\bigcup_{H\in \mathcal{H}}V(H)$, and
\item for all $H_i\in \mathcal{H}_i$, if $H_i$ is $\alpha^4$-near-bipartite, then there exists an $(\eta, \alpha^2)$-bipartition $\{X, Y\}$ of $H_i$ with $|X|\leq |Y|$ such that $H_i[X,Y]$ is $(\eta/2, \alpha/2)$-robust, and there exists $H_j\in \mathcal{H}_j$ for some $j\neq i$ such that $H_j$ is not $\alpha^4$-near-bipartite and $|V(H_j)\cap Y|\geq \eta^{1/2} n$.
\end{enumerate}
\end{definition}

The second, technical looking condition in the definition above is a direct consequence of the fact that  $\alpha^4$-near-bipartite components can only absorb pairs of vertices from opposite sides of the partition.  It is useful to think of this condition as meaning that some non-bipartite component must be available to ``absorb the imbalance'' from any bipartite component.

In our main lemma of this section, we will show that a $2$-colored graph with $\delta(G)\geq (3/4+\gamma)n$ either directly has the desired monochromatic cycle partition or some robust structure which we will later exploit using regularity and absorbing.

\begin{lemma}[Main structural lemma]\label{robuststructure}
Let $0<\frac{1}{n_0}\ll \alpha\ll \eta\ll \gamma\leq 1/4$ and let $G$ be a $2$-colored graph on $n\geq n_0$ vertices such that $\delta(G)\geq (\frac{3}{4}+\gamma)n$.  
Either $G$ has a partition into a red cycle and a blue cycle or
\begin{enumerate}
\item there exists an $(\eta, \alpha)$-robust subgraph $H_i\subseteq G_i$, such that $|H_i|\geq (1-\eta^{2/3})n$ and $H_i$ is not $\alpha^4$-near-bipartite, or

\item there exist $(\eta, \alpha)$-robust subgraphs $H_i\subseteq G_i$ for $i\in [2]$ such that $\{H_1, H_2\}$ forms an $(\eta, \alpha)$-nice partition of $G$ and 
\begin{enumerate}
\item $|H_1|, |H_2|\geq (3/4+\gamma/2)n$; or
\item $|H_i|\geq (1-\eta^{2/3})n$, $H_i$ is $\alpha^4$-near-bipartite, and $|H_{3-i}|\geq (1/2+\eta^2)n$.
\end{enumerate}
\end{enumerate}
Furthermore, for $i\in[2]$ and all $v\in V(G)\setminus V(H_i)$, $\deg_i(v)<\eta n$.
\end{lemma}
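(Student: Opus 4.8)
The plan is to run a case analysis driven by how each colour class $G_i$ decomposes into robust subgraphs. First I would dispose of a trivial case: if $\delta(G_i)\ge n/2$ for some $i\in[2]$, then Lemma~\ref{ChvatalDegree}(i) produces a Hamiltonian cycle in $G_i$, so $G$ has a partition into that cycle and an empty cycle of the other colour, and we are done; hence assume $\delta(G_i)<n/2$ for both $i$. For each $i$ set $B_i:=\{v\in V(G):\deg_i(v)<\eta n\}$; since $\deg_1(v)+\deg_2(v)\ge(3/4+\gamma)n>2\eta n$, we have $B_1\cap B_2=\emptyset$, so $A_i:=V(G)\setminus B_i$ satisfies $A_i\supseteq B_{3-i}$ and $A_1\cup A_2=V(G)$. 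I would then take, for each colour $i$, a vertex-maximum $(\eta,\alpha)$-robust subgraph $H_i^0\subseteq G_i$ (obtained by applying Observation~\ref{robustpartition} to $G_i$ restricted to a suitable colour-$i$-dense set, using Lemma~\ref{markov} to prune the few vertices sending many colour-$i$ edges into $B_i$) and pass to an $\eta$-maximal extension $H_i:=H_i^k$. Observation~\ref{augmentrobust} keeps $H_i$ $(\eta',\alpha')$-robust with parameters folded into the hierarchy $\alpha\ll\eta\ll\gamma$, and maximality of the extension gives $\deg_i(v,V(H_i))<\eta n$ for every $v\notin V(H_i)$.

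Upgrading this to the Furthermore, $\deg_i(v)<\eta n$ for all $v\notin V(H_i)$, amounts to showing $H_i$ absorbs every colour-$i$-dense vertex: if some $v\notin V(H_i)$ had $\deg_i(v)\ge\eta n$, then almost all of its colour-$i$ neighbours would lie outside $V(H_i)$, yielding a second robust colour-$i$ set disjoint from $H_i$, and re-maximising removes this possibility (or, if this second piece is itself large, it is fed directly into case~2 as the same-colour alternative). The sizes of the $H_i$ are then read off from $\delta(G)\ge(3/4+\gamma)n$: every vertex outside $V(H_i)$ lies in $B_i$, and analysing the cut $(V(H_i),B_i)$ in colour $i$ via Observation~\ref{simplerobust} bounds $|B_i|$, so that each $H_i$ is either nearly spanning, $|H_i|\ge(1-\eta^{2/3})n$, or at least $|H_i|\ge(3/4+\gamma/2)n$.

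The structural heart is that the two colours cannot both split. If $V(H)$ is a robust set and $G_1[V(H)]$ has an $\alpha'$-sparse cut $\{X,Y\}$, then since $e(X,Y)\ge\alpha''|X||Y|$ inside $H$, colour $2$ is dense across $(X,Y)$; iterating on the pieces, with the degradation $\eta\mapsto\eta/2$, $\alpha\mapsto\alpha/2$ from Observations~\ref{slicerobust} and~\ref{bipartiterobust}, terminates in boundedly many steps, and the only terminal configuration is a partition of $V(G)$ into $X_1,X_2,Y_1,Y_2$ with no edges between $X_1,Y_2$ and none between $X_2,Y_1$. But a vertex of $X_1$ is then non-adjacent to all of $Y_2$, so $\delta(G)\ge(3/4+\gamma)n$ forces $|Y_2|<(1/4-\gamma)n$, and symmetrically all four parts have size $<(1/4-\gamma)n$, contradicting $|X_1|+|X_2|+|Y_1|+|Y_2|=n$; hence at least one ``half'' of the extremal configuration must be empty. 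Tracking which parts survive yields the dichotomy: either one colour, say $1$, carries a single robust piece $H_1$ with $|H_1|\ge(1-\eta^{2/3})n$ --- case~1 when $H_1$ is not $\alpha^4$-near-bipartite, and case~2(b) when it is, where the piece $H_2$ of the other colour with $|H_2|\ge(1/2+\eta^2)n$ demanded by 2(b) comes from the larger side of the $(\eta,\alpha^2)$-bipartition of $H_1$ (Observation~\ref{bipartiterobust}) together with its colour-$2$ neighbourhoods, taken non-bipartite so as to satisfy condition~(ii) of Definition~\ref{nicepartition} --- or no colour is dominant and instead $G_1$ and $G_2$ each carry a robust piece of size $\ge(3/4+\gamma/2)n$ with $V(H_1)\cup V(H_2)=V(G)$, which is case~2(a). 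In each sub-case the bipartite side-condition of niceness is checked by a size count showing that the larger side $Y$ of a near-bipartite piece meets a non-bipartite piece of the opposite colour in at least $\eta^{1/2}n$ vertices.

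The step I expect to be the main obstacle is this case analysis: keeping $\eta$ and $\alpha$ controlled through the repeated cleaning and slicing operations while preserving $\alpha\ll\eta\ll\gamma$, and --- in the near-bipartite sub-cases --- checking simultaneously that the imbalance-absorbing piece of the opposite colour is large enough ($\ge(1/2+\eta^2)n$), robust, non-bipartite, and meets the larger side $Y$ in at least $\eta^{1/2}n$ vertices. This last point is delicate, because the naive inclusion--exclusion estimate on $|V(H_2)\cap Y|$ need not even be positive and must be improved using that every $v\in Y$ has at least $\eta n$ colour-$2$ neighbours, all but the $B_2$-forced ones of which lie in $A_2\subseteq V(H_2)$.
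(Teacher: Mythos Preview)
Your outline has the right high-level shape --- find robust pieces in each colour, maximally extend, classify by size --- but there is a genuine gap that makes it incomplete, and one of your key sub-arguments is wrong as stated.

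\textbf{The missing case is the direct cycle construction.} You assert that when $H_1$ is $\alpha^4$-near-bipartite and nearly spanning, the required $H_2$ with $|H_2|\ge(1/2+\eta^2)n$ ``comes from the larger side of the $(\eta,\alpha^2)$-bipartition of $H_1$ together with its colour-$2$ neighbourhoods''. But this size bound is not automatic. If the bipartition $\{S,T\}$ of $H_1$ is nearly balanced, with $|S|,|T|\approx n/2$, then the robust colour-$2$ piece built on $T$ may well have order below $(1/2+\eta^2)n$. The paper handles exactly this situation by abandoning the structural conclusions altogether and instead constructing the red cycle and blue cycle directly: one shows that $H_1[S,T]$ has minimum degree exceeding $|S|/2+1$ on each side (after discarding a few bad vertices), applies Lemma~\ref{ChvatalDegree}(ii) to get Hamiltonian biconnectedness, threads the few vertices outside $H_1$ into a short blue cycle using their large blue degree into $S$, and completes a red Hamiltonian path on the rest. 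This is the content of the ``Either $G$ has a partition into a red cycle and a blue cycle'' clause, and your plan provides no mechanism for reaching it.

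\textbf{The four-part contradiction is broken.} You write that the terminal configuration is a partition $X_1,X_2,Y_1,Y_2$ with ``no edges between $X_1,Y_2$'' and then deduce that a vertex of $X_1$ is ``non-adjacent to all of $Y_2$'', forcing $|Y_2|<(1/4-\gamma)n$. But iterating sparse cuts only gives that $(X_1,Y_2)$ is $\alpha'$-sparse, not empty; a single vertex of $X_1$ may still see a constant fraction of $Y_2$, so the degree-count contradiction does not follow. The paper avoids this entirely: its Proposition~\ref{prerobustlemma} argues instead by casing on the size of the \emph{largest} monochromatic robust subgraph, and in the ``both small'' case picks robust pieces $H_1',H_2'$ with large intersection (pigeonhole over the robust partitions of Observation~\ref{robustpartition}), then shows any $v\in V(H_1)\setminus V(H_2)$ would have $\deg(v)<3n/4$ by $\eta$-maximality --- a one-vertex degree argument rather than a four-part global one.

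More broadly, the paper's route is: first prune the extreme vertices $Z_i=\{v:\deg_{3-i}(v)<\eta^{1/3}n\}$ (Claim~\ref{Zprune}), use that a large $Z_i$ already forces a robust $H_i$ of order $\ge(3/4+3\gamma/4)n$ via common-neighbourhood counts (Claim~\ref{Znotsmall}), and only then run the size-of-largest-robust-piece case analysis. Your plan to apply Observation~\ref{robustpartition} directly to $G_i$ does not work as written, since $G_i$ need not satisfy $\delta(G_i)\ge\eta n$; the pruning of $B_i$ you allude to is precisely what Claims~\ref{Zprune}--\ref{Znotsmall} make rigorous, and they are not as lightweight as your sketch suggests.
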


If every vertex had sufficiently large red degree and blue degree, we would have little difficulty proving this lemma.  The first obstacle to overcome is dealing with the vertices which do not have large enough degree in some color. Given a $2$-colored graph $G$, we define $Z_i(G,d)$ to be the set of vertices having degree less than $d$ in color $3-i$, and consequently having degree $\delta(G)-d$ in color $i$.  We refer to the set $Z_i(G,d)$ as the \emph{extreme} vertices of $G_i$.  We now prove two claims which will ultimately be useful in dealing with the extreme vertices.

\begin{claim}\label{Zprune}
Let $0<\frac{1}{n_0}\ll \eta\ll \gamma\leq 1/4$ and let $G$ be a $2$-colored graph on $n\geq n_0$ vertices such that $\delta(G)\geq (\frac{3}{4}+\gamma)n$.  For $i\in [2]$, set $Z_i:=Z_i(G, \eta^{1/3}n)$.  Either 
\begin{enumerate}
\item $\delta_1(G) \geq \eta^{1/3}n$ and $\delta_2(G) \geq \eta^{1/3}n$, or
\item there exists $i\in [2]$ such that $\delta_i(G)\geq \eta^{1/3}n$ and $|Z_i|\geq \eta^{2/3}n$, or
\item $|Z_1|\geq \eta^{2/3}n$ and $|Z_2|\geq \eta^{2/3}n$, or
\item $G':=G-(Z_1\cup Z_2)$ satisfies $|G'|> (1-2\eta^{2/3})n$ and $\delta_1(G') \geq \eta^{1/3}n/2$ and $\delta_2(G') \geq \eta^{1/3}n/2$, or
\item there exists $i\in [2]$ such that $G':=G-Z_{3-i}$ satisfies $|G'|> (1-\eta^{2/3})n$ and $\delta_i(G')\geq \eta^{1/3}n/2$ and $|Z_i'|\geq \eta^{2/3}n$, where $Z_i':=Z_i(G', \eta^{1/3}n)$.
\end{enumerate}
\end{claim}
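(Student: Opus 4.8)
The plan is to prove this by a short case analysis on the sizes $z_1:=|Z_1|$ and $z_2:=|Z_2|$, comparing each against $0$ and against the threshold $\eta^{2/3}n$. Two elementary observations do all the work. First, $Z_1\cap Z_2=\emptyset$: a vertex lying in both would have $\deg_1(v)+\deg_2(v)<2\eta^{1/3}n$, contradicting $\delta(G)\ge(3/4+\gamma)n$ once $\eta$ is small. Second, deleting a vertex set $S$ decreases every color-$c$ degree by at most $|S|$, so in particular any vertex of $G-S$ lying outside $Z_{3-c}$ (equivalently, with $\deg_c\ge\eta^{1/3}n$ in $G$) still has $\deg_c\ge\eta^{1/3}n-|S|$ in $G-S$. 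I would also record the tautologies $\delta_1(G)\ge\eta^{1/3}n\iff Z_2=\emptyset$ and $\delta_2(G)\ge\eta^{1/3}n\iff Z_1=\emptyset$, since $Z_2$ (resp.\ $Z_1$) is exactly the set of vertices witnessing a failure of that bound.

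With these in hand I would dispatch the cases as follows. If $z_1=z_2=0$, conclusion (i) holds. If $z_1,z_2\ge\eta^{2/3}n$, conclusion (iii) holds. If one of them --- say $z_j$ --- is at least $\eta^{2/3}n$ and the other is $0$, then $Z_{3-j}=\emptyset$ gives $\delta_j(G)\ge\eta^{1/3}n$, and together with $|Z_j|\ge\eta^{2/3}n$ this is conclusion (ii) with $i=j$. If $z_1,z_2<\eta^{2/3}n$ but not both vanish, set $G':=G-(Z_1\cup Z_2)$; disjointness gives $|G'|>(1-2\eta^{2/3})n$, and every vertex of $G'$ is outside both $Z_1$ and $Z_2$, so by the second observation $\delta_1(G'),\delta_2(G')\ge\eta^{1/3}n-2\eta^{2/3}n\ge\eta^{1/3}n/2$, i.e.\ conclusion (iv). The remaining case is $z_j\ge\eta^{2/3}n$ and $0<z_{3-j}<\eta^{2/3}n$: take $i:=j$ and $G':=G-Z_{3-i}$, so $|G'|>(1-\eta^{2/3})n$ and each vertex of $G'$ (being outside $Z_{3-i}$) has $\deg_i\ge\eta^{1/3}n-\eta^{2/3}n\ge\eta^{1/3}n/2$ in $G'$; moreover, since $Z_i$ is disjoint from $Z_{3-i}$ it survives into $G'$, and its vertices already had $\deg_{3-i}<\eta^{1/3}n$ in $G$ and hence in $G'$, so $Z_i\subseteq Z_i(G',\eta^{1/3}n)=:Z_i'$ and $|Z_i'|\ge z_i\ge\eta^{2/3}n$, which is conclusion (v). These possibilities for $(z_1,z_2)$ are exhaustive, so the claim follows.

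I do not expect any genuine obstacle here: the content is entirely bookkeeping. The only points needing mild care are the numerical inequalities relating $\eta^{2/3}n$ to $\eta^{1/3}n/2$ (all valid once $\eta$ is small enough, which is guaranteed by $\eta\ll\gamma$) and the observation that the extreme sets $Z_1,Z_2$ are disjoint and therefore each persists when the other is deleted --- this last point is precisely what makes conclusion (v)'s requirement $|Z_i'|\ge\eta^{2/3}n$ come out rather than potentially collapsing after the deletion.
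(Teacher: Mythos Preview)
Your proof is correct and follows essentially the same approach as the paper's: both rest on the disjointness of $Z_1$ and $Z_2$, the equivalence $\delta_i(G)\ge\eta^{1/3}n\iff Z_{3-i}=\emptyset$, and the observation that deleting fewer than $\eta^{2/3}n$ vertices drops each color-degree by less than $\eta^{1/3}n/2$. The only cosmetic difference is that the paper assumes (i)--(iii) fail and then establishes (iv) or (v), whereas you enumerate all nine possibilities for $(z_1,z_2)$ directly; the computations and conclusions are identical.
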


\begin{proof}
Suppose (i), (ii), and (iii) fail.  If $|Z_1|< \eta^{2/3}n$ and $|Z_2|< \eta^{2/3}n$, then $G':=G-(Z_1\cup Z_2)$ satisfies $|G'|> (1-2\eta^{2/3})n$ and $\delta_1(G') \geq \eta^{1/3}n-2\eta^{2/3}n\geq  \eta^{1/3}n/2$ and $\delta_2(G') \geq \eta^{1/3}n-2\eta^{2/3}n\geq \eta^{1/3}n/2$.

So suppose $|Z_1|\geq \eta^{2/3}n$.  Since (ii) fails, $\delta_1(G)< \eta^{1/3}n$ which implies $|Z_2|>0$.  Since (iii) fails, $0<|Z_2|<\eta^{2/3}n$.  So $G':=G-Z_2$ satisfies $\delta_1(G')\geq \eta^{1/3}n-\eta^{2/3}n\geq  \eta^{1/3}n/2$ and $|Z_1'|\geq \eta^{2/3}n$ (since $Z_1\cap Z_2=\emptyset$).
\end{proof}

\begin{claim}\label{Znotsmall}
Under the same assumptions as in Claim \ref{Zprune}, if $|Z_i|\geq \eta^{2/3}n$, then there exists $H_i\subseteq G_i$ such that $H_i$ is $(\eta, 8\alpha/\eta)$-robust, $Z_i\subseteq V(H_i)$, and $|H_i|\geq (3/4+3\gamma/4)n$.
\end{claim}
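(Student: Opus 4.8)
The plan is to construct $H_i$ by hand: set $A_i:=\{w\in V(G):\deg_i(w,Z_i)\geq \eta n\}$ and take $H_i:=G_i[A_i\cup Z_i]$. Then $Z_i\subseteq V(H_i)$ for free, and it remains to prove $|V(H_i)|>(3/4+3\gamma/4)n$, to check the minimum degree condition $\delta(H_i)\geq\eta n$, and to show $H_i$ has no $(8\alpha/\eta)$-sparse cut --- the last being the real content.

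First I would bound $|A_i|$ from below. Since $z\in Z_i$ forces $\deg_{3-i}(z)<\eta^{1/3}n$, every $z\in Z_i$ has $\deg_i(z)>(3/4+\gamma-\eta^{1/3})n$; summing, $\sum_{w\in V(G)}\deg_i(w,Z_i)=\sum_{z\in Z_i}\deg_i(z)>|Z_i|(3/4+\gamma-\eta^{1/3})n$, while $\deg_i(w,Z_i)\leq|Z_i|$ for every $w$. Because $|Z_i|\geq\eta^{2/3}n$ is far larger than the cutoff $\eta n$, applying Lemma~\ref{markov}(ii) to the multiset $\{\deg_i(w,Z_i):w\in V(G)\}$ with $b=|Z_i|$ and $a=\eta n$ shows that at most roughly $(1/4-\gamma+2\eta^{1/3})n$ vertices fall below the cutoff, so $|A_i|>(3/4+3\gamma/4)n$ using $\eta\ll\gamma$; in particular $|V(H_i)|\geq|A_i|>(3/4+3\gamma/4)n$.

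The minimum degree is immediate: each $w\in A_i\setminus Z_i$ has $\deg_i(w,V(H_i))\geq\deg_i(w,Z_i)\geq\eta n$ by definition of $A_i$, and each $z\in Z_i$ has $\deg_i(z,V(H_i))\geq\deg_i(z,A_i)\geq\deg_i(z)-|V(G)\setminus A_i|>(1/2+7\gamma/4-\eta^{1/3})n$, comfortably above $\eta n$; since $|V(H_i)|\leq n$ this gives $\delta(H_i)\geq\eta n$.

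The hard part --- and the step I expect to be the main obstacle --- is ruling out a $(8\alpha/\eta)$-sparse cut. Suppose $\{X_1,X_2\}$ partitions $V(H_i)$ with $|X_1|\leq|X_2|$ and $e_{G_i}(X_1,X_2)<\frac{8\alpha}{\eta}|X_1||X_2|$. As $8\alpha/\eta\leq\eta/2$, Observation~\ref{simplerobust} applied to $H_i$ lets me assume $|X_1|>\frac{\eta}{2}|V(H_i)|$, and hence $|X_2|\geq\frac12|V(H_i)|>(3/8+3\gamma/8)n$. The crucial feature is that every $z\in Z_i$ has color-$i$ degree exceeding $n/2$: for $z\in Z_i\cap X_1$, its color-$i$ neighbors avoiding $X_2$ lie in $X_1\cup(V(G)\setminus V(H_i))$, so $\deg_i(z,X_2)\geq\deg_i(z)+|X_2|-n>|X_2|-(1/4-\gamma+\eta^{1/3})n>n/16$. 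Meanwhile each $w\in(A_i\setminus Z_i)\cap X_1$ sends at least $\eta n-|Z_i\cap X_1|$ color-$i$ edges to $X_2$. Writing $q:=|Z_i\cap X_1|$ and $p:=|X_1|-q$ and summing $\deg_i(v,X_2)$ over $v\in X_1$, we get $e_{G_i}(X_1,X_2)\geq p\cdot\max(0,\eta n-q)+\frac{n}{16}q$. I would then finish by a three-case check against $e_{G_i}(X_1,X_2)<\frac{8\alpha}{\eta}|X_1||X_2|\leq\frac{8\alpha}{\eta}|X_1|n$: if $q\geq\eta n/2$ the second term gives $e_{G_i}(X_1,X_2)\geq\eta n^2/32$, which exceeds $\frac{8\alpha}{\eta}|X_1||X_2|\leq\frac{2\alpha}{\eta}n^2$; if $q<\eta n/2$ and $p\geq q$ then $|X_1|\leq2p$ and the first term gives $e_{G_i}(X_1,X_2)>\frac{\eta n}{4}|X_1|$; and if $q<\eta n/2$ and $p<q$ then $q>|X_1|/2$ and the second term gives $e_{G_i}(X_1,X_2)>\frac{n}{32}|X_1|$. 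Each is a contradiction once $\alpha\ll\eta$ (concretely $\alpha\leq\eta^2/64$ suffices throughout). The only subtlety is tracking which of the two sources of crossing edges --- the high-degree $Z_i$-vertices on the small side, or the $A_i$-vertices whose $\eta n$ edges into $Z_i$ mostly cross --- dominates for a given split of $Z_i$; all remaining estimates are routine given $\alpha\ll\eta\ll\gamma$.
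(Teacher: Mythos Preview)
Your argument is correct and takes a genuinely different route from the paper's. The paper constructs $H_i$ essentially the same way (with a slightly larger threshold $\eta^{1/3}|Z_i|$ in place of your $\eta n$, and an extra case split when $|Z_i|\geq(3/4+3\gamma/4)n$), but then establishes robustness indirectly: it shows that between any two vertices of $H_i$ there are many short paths (of length $2$, $3$, or $4$), i.e.\ that $H_i$ has the connecting property, and then invokes Lemma~\ref{connectingrobust}(ii) to convert this into $(\eta,8\alpha/\eta)$-robustness. You instead verify the no-sparse-cut condition directly by the edge-counting case analysis on $q=|Z_i\cap X_1|$, bypassing the connecting lemma entirely. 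Your route is more elementary and avoids the case split on $|Z_i|$; the paper's route fits the broader connecting/robust framework developed in Section~\ref{sec:conabs} and reuses existing machinery. Either way the Markov bound on $|V(G)\setminus A_i|$ and the high color-$i$ degree of $Z_i$-vertices are the essential ingredients.
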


\begin{proof}
Without loss of generality, suppose $i=1$.  Note that 
\begin{equation}\label{extremedegree}
\deg_1(v)\geq (3/4+\gamma-\eta^{1/3})n \text{ for all } v\in Z_1.
\end{equation}

First we establish a general bound on the number of common neighbors of color 1 inside a given set $X\subseteq V(G)$. By \eqref{extremedegree}, for all $z,z'\in Z_1$ we have $|N_1(z,z')|\geq (1/2+2\gamma-2\eta^{1/3})n\geq (1/2+\gamma)n$ and thus
\begin{equation}\label{commonoutsideY}
|N_1(z,z')\cap X|\geq |X|-(1/2-\gamma)n.
\end{equation}
If $|Z_1|\geq (3/4+3\gamma/4)n$, then \eqref{commonoutsideY} with $X= Z_1$ implies that for all $z,z'\in Z_1$, 
\begin{align*}
|N_1(z,z')\cap Z_1|\geq (1/4+\gamma)n.
\end{align*}
So $H_1:=G_1[Z_1]$ satisfies $|H_1|\geq (3/4+3\gamma/4)n$ and has the $(1,1/4+\gamma)$-connecting property.  Thus $H_1$ is $(\eta, 8\alpha/\eta)$-robust by Lemma \ref{connectingrobust}.

So suppose $\eta^{2/3}n\leq |Z_1|< (3/4+3\gamma/4)n$.  Let $X=V(G)\setminus Z_1$ and let $X'=\{x\in X: \deg_1(x, Z_1)<\eta^{1/3}|Z_1|\}$. 
Then $\deg_1(v, X)\geq (\frac{3}{4}+\gamma-\eta^{1/3})n-|Z_1|>\gamma n/8$ for all $v\in Z_1$, so by Lemma \ref{markov}.(ii), we have 
\begin{equation}\label{X'size}
|X'|\leq \frac{|Z_1||X|-|Z_1|((\frac{3}{4}+\gamma-\eta^{1/3})n-|Z_1|)}{|Z_1|-\eta^{1/3}|Z_1|}=\frac{|X|+|Z_1|-(\frac{3}{4}+\gamma-\eta^{1/3})n}{1-\eta^{1/3}}<(\frac{1}{4}-\frac{\gamma}{4})n.
\end{equation}
Set $U:=Z_1\cup (X\setminus X')$ and note that $|U|=n-|X'|> (3/4+3\gamma/4)n$.  Set $H_1:=G_1[U]$.  Note that $\deg_1(u, Z_1)\geq \eta^{1/3}|Z_1|\geq \eta n$ for all $u\in X\setminus X'$, and also by \eqref{commonoutsideY} and \eqref{X'size} we have $|N_1(z,z')\cap U|\geq (3/4+3\gamma/4)n-(1/2-\gamma)n\geq (1/4+\gamma)n$ for all $z,z'\in Z_1$.  Thus for every pair of vertices in $U$, we can either find $(1/4+\gamma)n$ paths of length $2$, $\eta (1/4+\gamma)n^2$ paths of length $3$, or $\eta^2 (1/4+\gamma)n^3$ paths of length $4$.  Combined with the fact that $\delta(H_1)\geq \eta n$, we may apply Lemma \ref{connectingrobust}(ii) to see that $H_1$ is $(\eta, 8\alpha/\eta)$-robust.  
\end{proof}

We now prove a preliminary result which applies to graphs $G$ having the property that for all $i\in [2]$, either $\delta_i(G)$ is sufficiently large or the number of extreme vertices in $G_{3-i}$ is sufficiently large (i.e. Claim \ref{Zprune}(i),(ii), or (iii) holds).  Treating this case separately will allow for Lemma \ref{robuststructure} to have a cleaner proof.

\begin{proposition}\label{prerobustlemma}
Let $0<\frac{1}{n_0}\ll \alpha\ll \eta\ll \gamma\leq 1/4$ and let $G$ be a $2$-colored graph on $n\geq n_0$ vertices such that $\delta(G)\geq (\frac{3}{4}+\gamma)n$. If 
\begin{enumerate}[label=(\alph*)]
\item $\delta_1(G)\geq \eta^{1/3}n$ and $\delta_2(G) \geq \eta^{1/3}n$, or
\item there exists $i\in [2]$ such that $\delta_i(G)\geq \eta^{1/3}n$ and $|Z_i|\geq \eta^{2/3}n$, or
\item $|Z_1|\geq \eta^{2/3}n$ and $|Z_2|\geq \eta^{2/3}n$,
\end{enumerate}
then there exist $(\eta, \alpha)$-robust subgraphs $H_i\subseteq G_i$ for all $i \in [2]$ such that
\begin{enumerate}
\item $|H_1|,|H_2|\geq (3/4+3\gamma/4)n$ and $V(G)=V(H_1)\cup V(H_2)$, or
\item $|H_i|=n$ for some $i\in [2]$.
\end{enumerate}
\end{proposition}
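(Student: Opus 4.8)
The plan is to split into the three cases (a), (b), (c) (relabelling colours so that the index in (b) is $1$, and regarding ourselves as in (c) whenever $|Z_1|,|Z_2|\ge\eta^{2/3}n$). The common tool is what I will call \emph{extending an extreme set}: if $|Z_j|\ge\eta^{2/3}n$, Claim~\ref{Znotsmall} gives an $(\eta,8\alpha/\eta)$-robust subgraph $H^0\subseteq G_j$ with $Z_j\subseteq V(H^0)$ and $|H^0|\ge(3/4+3\gamma/4)n$; replace $H^0$ by an $\eta$-maximal extension $H_j$. Since $\alpha\ll\eta$, Observation~\ref{augmentrobust} applies (with its parameters $8\alpha/\eta,\eta,\eta$), and as $\tau:=|V(H^0)|/|V(H_j)|\ge 3/4$ it yields that $H_j$ is $(\eta,8\alpha\tau)$-robust and hence $(\eta,\alpha)$-robust; also $|H_j|\ge(3/4+3\gamma/4)n$ and, by $\eta$-maximality, $\deg_j(v,V(H_j))<\eta n$ for every $v\notin V(H_j)$.

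In case (c) I run this in both colours to get $(\eta,\alpha)$-robust $H_1\subseteq G_1$, $H_2\subseteq G_2$, each of order $\ge(3/4+3\gamma/4)n$. If some $|H_i|=n$ we land in conclusion (ii). Otherwise a vertex $v\notin V(H_1)\cup V(H_2)$ would satisfy $\deg_i(v)<\eta n+\bigl(n-|V(H_i)|\bigr)\le\eta n+(1/4-3\gamma/4)n$ for $i\in[2]$, hence $\deg(v)<2\eta n+(1/2-3\gamma/2)n<(3/4+\gamma)n$, contradicting $\delta(G)\ge(3/4+\gamma)n$. So $V(H_1)\cup V(H_2)=V(G)$ and conclusion (i) holds.

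In case (b) (with $i=1$) I build $H_1\subseteq G_1$ as above, with $Z_1\subseteq V(H_1)$ and $|H_1|\ge(3/4+3\gamma/4)n$. If $|H_1|=n$, conclusion (ii) holds. Otherwise put $R=V(G)\setminus V(H_1)$; then $1\le|R|\le(1/4-3\gamma/4)n$, and (as in case (c)) every $v\in R$ has $\deg_1(v)<(1/4-\gamma/2)n$, hence $\deg_2(v)>(1/2+3\gamma/2)n$, and in fact $\deg_2(v,V(H_1))>(1/2+3\gamma/2)n$. If $|R|$ is tiny (say $|R|\le\eta^2 n$), a short computation — using $\delta(G_1)\ge\eta^{1/3}n$ from hypothesis (b), Observation~\ref{simplerobust} for bipartitions with a small side, and the robustness of $H_1$ for the rest — shows $G_1$ itself is $(\eta,\alpha)$-robust, so (ii) holds with $H_1:=G_1$. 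If $|R|$ is not tiny, the $R$-vertices lie in a single blue component $C$ (their blue neighbourhoods pairwise meet in $>3\gamma n$ vertices), and I extract from $G_2[C]$ a blue $(\eta,\alpha)$-robust subgraph $H_2\supseteq R$ by repeatedly deleting vertices of current blue-degree $<\eta n$ — the huge blue degree of the $R$-vertices keeps them out of harm's way — aiming to show $|H_2|\ge(3/4+3\gamma/4)n$ and thus conclusion (i); should the pruning fall short, one instead shows the red side can be grown to all of $V(G)$, giving (ii).

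Finally, in case (a) we have $Z_1=Z_2=\varnothing$ and $\delta(G_1),\delta(G_2)\ge\eta^{1/3}n$. Applying Observation~\ref{robustpartition} to $G_1$ and to $G_2$ gives, in each colour, a partition into parts with no $\alpha$-sparse cut and minimum degree $\ge\eta^{2/3}n/4\ge\eta n$, i.e.\ into $(\eta,\alpha)$-robust pieces; if $G_1$ or $G_2$ is a single such piece we are in (ii). Otherwise the plan is to take the largest red piece $V^*$ and the largest blue piece $W^*$ and show $V(G)=V^*\cup W^*$ with $|V^*|,|W^*|\ge(3/4+3\gamma/4)n$ — using that (as in the proof of Observation~\ref{robustpartition}) only $O(\alpha n^2)$ edges run between distinct pieces of a fixed colour, so almost every vertex keeps almost all of its monochromatic edges inside its own piece; a vertex outside both $V^*$ and $W^*$ would then have too small a total degree, and the cross-edge sparsity forces the two largest pieces to be large. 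The main obstacle throughout is this size-and-covering bookkeeping — above all, in case (b), preventing the blue robust subgraph from collapsing to order $\approx n/2$, and in case (a), the bipartite-type accounting that pins down the two largest monochromatic pieces; the robustness machinery itself is applied essentially off the shelf.
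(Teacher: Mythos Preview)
Your case (c) is clean and correct: Claim~\ref{Znotsmall} plus $\eta$-maximal extension does the job, and the covering argument via degree counting is fine.

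Cases (a) and (b), however, have genuine gaps, and they are not just bookkeeping. In case (b), your pruning procedure --- repeatedly deleting vertices of current blue degree $<\eta n$ --- yields at best a subgraph with $\delta\ge\eta n$, but this does \emph{not} make it $(\eta,\alpha)$-robust: you still need the absence of $\alpha$-sparse cuts, and degree pruning does nothing to ensure that. The paper establishes robustness by a different route: it shows directly, via common-neighbourhood counts of the type $|N_1(u,v)\cap X|\ge\gamma n$ for $u,v$ in the complement of the large piece, that the candidate subgraph has the $(k,\cdot)$-connecting property for some $k\le 3$, and then invokes Lemma~\ref{connectingrobust}(ii). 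Your fallback ``the red side can be grown to all of $V(G)$'' is also unsupported --- and in the regime $(3/4+3\gamma/4)n\le |H_1|<(1-\eta^{1/3})n$ the paper does not grow $H_1$ further but instead builds the second robust piece via a Markov-type averaging (bounding the set of vertices in the large piece with few neighbours of the other colour in its complement).

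In case (a), taking the largest piece of each colour from Observation~\ref{robustpartition} does not by itself give $|V^*|,|W^*|\ge(3/4+3\gamma/4)n$; the robust partition could a priori have all pieces of size $\le(1/2-\gamma)n$, and your cross-edge sparsity heuristic does not rule this out. The paper's argument here is different and somewhat delicate: it picks the pair of pieces (one of each colour) with \emph{maximum intersection}, takes their $\eta$-maximal extensions $H_1,H_2$, and uses a vertex in the intersection with small degree to the outside to force first $|V(H_1)\cup V(H_2)|\ge 3n/4$ and then a contradiction to $|H_i|<(1/2-\gamma)n$. Only after this is the existence of a large robust piece established.

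In short, the paper does not split by hypothesis (a)/(b)/(c) at all; it splits by the size of the largest monochromatic robust subgraph (small, very large, or intermediate), and the recurring technical device is the connecting-property criterion for robustness (Lemma~\ref{connectingrobust}(ii)) combined with common-neighbourhood and Markov bounds --- none of which appear in your outline where they are actually needed.
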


\begin{proof}
Suppose that for all $i\in [2]$ we have $\delta_i(G)\geq \eta^{1/3}n$ or $|Z_{3-i}| \geq \eta^{2/3}n$ (this is just a concise way of stating the hypothesis). 

Suppose first that the largest monochromatic $(\eta, 3\alpha/\eta)$-robust subgraph of $G$ has fewer than $(1/2-\gamma)n$ vertices.  In this case, we must have $|Z_1|, |Z_2|< \eta^{2/3}n$ because of Claim \ref{Znotsmall}, which means $\delta_1(G)$, $\delta_2(G)\geq \eta^{1/3}n$.  Now apply Observation \ref{robustpartition} to each of $G_1$ and $G_2$ to get a $(\eta^{2/3}/4, \eta/80)$-robust partition of $G_i$ for $i\in[2]$.  Choose $H_1'\subseteq G_1$ and $H_2'\subseteq G_2$ to be the pair of parts in the partition having maximum intersection.  Since (as given by Observation \ref{robustpartition}) there are at most $2/\eta^{1/3}$ parts in the partition of each $G_i$ and each part has size at least $\eta^{1/3}n/2$, we have 
\begin{equation}\label{bigintersection}
|V(H_1')\cap V(H_2')|\geq \eta^{2/3} n/4. 
\end{equation}
Let $H_1$ and $H_2$ be $\eta$-maximal extensions of $H_1'$ and $H_2'$ respectively.  We note that since $|H_i'|\geq \eta^{1/3}n/2$, we may apply Observation \ref{augmentrobust} with $\alpha=\eta/80$, with $\eta'=\eta$ and with $\tau\geq \eta^{1/3}/2$ to get that $H_i$ is $(\eta, \eta^{7/3}/160)$-robust for $i\in [2]$.

If $|V(H_1)\cup V(H_2)|<3n/4$, let $L:=\{v\in V(H_1)\cap V(H_2): \deg(v, V(G)\setminus (V(H_1)\cup V(H_2))\geq \gamma n\}$.  Since $H_1$ and $H_2$ are $\eta$-maximal, we have $e(V(H_1)\cap V(H_2), V(G)\setminus (V(H_1)\cup V(H_2))<\eta n^2$ and thus $|L|<\frac{\eta}{\gamma}n<\eta^{2/3} n/4$.  Thus by \eqref{bigintersection}, $(V(H_1)\cap V(H_2))\setminus L$ is non-empty.  Now for all $v\in (V(H_1)\cap V(H_2))\setminus L$ we have $\deg(v)< |V(H_1)\cup V(H_2)|+ \gamma n<(3/4+\gamma)n$, a contradiction.

So $|V(H_1)\cup V(H_2)|\geq 3n/4$.  By assumption, we have $|H_1|, |H_2|<(1/2-\gamma)n$ (since $H_i$ is $(\eta, \eta^{7/3}/160)$-robust for $i\in [2]$).  For a vertex $v\in V(H_1)\setminus V(H_2)$, we have $\deg(v)\leq |H_1|+\eta n+|V(G)\setminus (V(H_1)\cup V(H_2))|\leq (1/2-\gamma)n+\eta n+n/4<3n/4$, a contradiction.

Next we show that if $G$ contains a monochromatic $(\eta, 3\alpha/\eta)$-robust subgraph on at least $(1-\eta^{1/3}+\eta)n$ vertices, then we satisfy conclusion (i) or (ii).  So without loss of generality, suppose $H_2'\subseteq G_2$ is such a subgraph.  Let $H_2$ be an $\eta$-maximal extension of $H_2'$. 
If $|H_2|=n$, we satisfy conclusion (ii), so suppose not.  In this case we have $\deg_2(v)<|V(G)\setminus V(H_2)|+\eta n<\eta^{1/3}n$ for all $v\in V(G)\setminus V(H_2)$ and thus $V(G)\setminus V(H_2)\subseteq Z_1$.  Since $\delta_2(G)<\eta^{1/3}n$, we must have $|Z_1|>\eta^{2/3}n$ (by the original assumption) and thus by Claim \ref{Znotsmall}, there exists $H_1$ which together with $H_2$ satisfies conclusion (i). 

So we are in the case where there exists a monochromatic $(\eta, 3\alpha/\eta)$-robust subgraph, say  $H_2'\subseteq G_2$ with at least $(1/2-\gamma)n$ vertices, and no monochromatic $(\eta, 3\alpha/\eta)$-robust subgraph of $G$ has more than $(1-\eta^{1/3}+\eta)n$ vertices.

Let $H_2$ be an $\eta$-maximal extension of $H_2'$. By Observation \ref{augmentrobust}, $H_2$ is $(\eta, \alpha)$-robust (in the application of Observation \ref{augmentrobust}, we have $\tau>1/3$ since $|H_2'|\geq (1/2-\gamma)n$).  Set $b:=|H_2|$ and note that $(1/2-\gamma)n\leq b\leq (1-\eta^{1/3}+\eta)n$.

We first note that there can only be a small number of vertices in $H_2$ which have at least $\eta^{1/2}n$ neighbors of color $2$ outside of $H_2$. Formally, set $Y_1:=V(G)\setminus V(H_2)$ and let $L_2=\{v\in V(H_2):\deg_2(v, Y_1)\geq \eta^{1/2}n\}$.  Since $e_2(Y_1, V(H_2))<\eta n^2$ (because $H_2$ is $\eta$-maximal), we have $|L_2|\leq \eta^{1/2}n$.

For all $v\in Y_1$, 
\begin{equation}\label{singleinH2}
\deg_1(v, V(H_2))\geq (3/4+\gamma)n-\eta n-(n-b)=b-(1/4-\gamma+\eta)n.
\end{equation}
So for all $u,v\in Y_1$, since $b\geq (1/2-\gamma)n$, we have
\begin{equation}\label{commoninH_2}
|N_1(u,v)\cap V(H_2)|\geq 2(b-(1/4-\gamma+\eta)n)-b=b-(1/2-2\gamma+2\eta)n\geq \gamma n/2.
\end{equation}
Also, for all $v\in V(H_2)\setminus L_2$, 
\begin{equation}\label{smallb}
\deg_1(v, Y_1)\geq (3/4+\gamma)n-\eta^{1/2}n-b=(3/4+\gamma-\eta^{1/2})n-b
\end{equation}

So if $b\leq (3/4+3\gamma/4)n$, then \eqref{smallb}, \eqref{commoninH_2}, and the bound on $|L_2|$ imply that for $H_1':=G_1[V(G)\setminus L_2]$, we have $\delta(H_1')\geq \gamma n/8$, $|H_1'|\geq (1-\eta^{1/2})n\geq (1-\eta^{1/3}+\eta)n$, and for every pair of vertices $u,v\in V(H_1')$, there is an $1\leq i\leq 3$ such that there are at least $(\gamma n/8)^i$ paths of length $i+1$ from $u$ to $v$.  Applying Lemma \ref{connectingrobust}(ii), we see that $H_1'$ is $(\gamma/8, (\gamma/8)^3)$-robust.  However, we are in the case where there is no such robust monochromatic subgraph of this size.  

So suppose $(3/4+3\gamma/4)n\leq b<(1-\eta^{1/3}+\eta)n$.  Let $X_2=\{v\in V(H_2): \deg_1(v, Y_1)<\eta n\}$.  By \eqref{singleinH2} and Lemma \ref{markov}.(ii), we have 
\begin{align*}
|X_2|\leq \frac{(n-b)b-(b-(\frac{1}{4}-\gamma+\eta)n)(n-b)}{n-b-\eta n}
=\frac{(n-b)(\frac{1}{4}-\gamma+\eta)n}{n-b-\eta n} 
\leq (1/4-\gamma/4)n.
\end{align*}

Then $H_1:=G_1[Y_1\cup (V(H_2)\setminus X_2)]=G_1[V(G)\setminus X_2]$ satisfies $|V(H_1)|\geq (3/4+3\gamma/4)n$ and is $(\eta, 3\alpha/\eta)$-robust in color $1$ in which case we satisfy conclusion (i). Indeed, by \eqref{commoninH_2} and the upper bound on $|X_2|$ we have for all $u,v\in Y_1$, 
$$|N_2(u,v)\cap (V(H_2)\setminus X_2)|\geq b-(1/2-2\gamma+2\eta)n-(1/4-\gamma/4)n \geq \gamma n$$ 
and $\deg_1(v, Y_1)\geq \eta n$ for all $v\in V(H_2)\setminus X_2$. Thus for every pair of vertices $u,v\in V(H_1)$ there exists $1\leq i\leq 3$ such that there are at least $(\eta n)^i$ paths of length $i+1$ from $u$ to $v$; thus by Lemma \ref{connectingrobust}(ii), $H_1$ is $(\eta, 3\alpha/\eta)$-robust.

\end{proof}

Finally, we prove the main result of this section.

\begin{proof}[Proof of Lemma \ref{robuststructure}]
Set $\alpha_0=4\alpha/\eta$ and $\gamma_0=\gamma-8\eta^{2/3}$ and start by applying Claim \ref{Zprune} with $8\eta$ and $\gamma$.  If Claim \ref{Zprune} (i), (ii), or (iii) hold, then set $G':=G$.  If Claim \ref{Zprune} (iv) holds, set $G':=G-(Z_1\cup Z_2)$.  If Claim \ref{Zprune} (v) holds, then set $G':=G-Z_{3-i}$.  Set $n':=|G'|$.  Note that $G'$ satisfies the hypotheses of Proposition \ref{prerobustlemma} (with $n'$, $\alpha_0$, $\eta$, $\gamma_0$) so we  obtain $(\eta, \alpha_0)$-robust subgraphs $H_1'$, $H_2'$ of $G'$ satisfying the conclusion of Proposition \ref{prerobustlemma}.

First suppose Proposition \ref{prerobustlemma}.(i) holds; that is, $|H_1'|, |H_2'|\geq (3/4+3\gamma_0/4)n\geq (3/4+\gamma/2)n$ and $V(G')=V(H_1')\cup V(H_2')$.  Then every vertex $v\in Z_1\cup Z_2$ satisfies $\deg_i(v,H_i')\geq 3n/8$ for some $i\in [2]$.  We add these vertices to the appropriate components by taking $\eta$-maximal extensions and thus by Observation \ref{augmentrobust}, $H_1$ and $H_2$ are $(\eta, \alpha)$-robust and satisfy conclusion (ii.a).  If both $H_1$ and $H_2$ are not $\alpha^4$-near-bipartite, then $H_1, H_2$ forms the desired $(\eta, \alpha)$-nice partition.  We delay the proof when, say $H_1$ is $\alpha^4$-near-bipartite until the end (see Case 1 below).

Now suppose Proposition \ref{prerobustlemma}.(ii) holds; that is, without loss of generality $|H_1'|=n'$. If $G_2'$ contains an $(\eta, \alpha_0)$-robust subgraph with $|H_2'|\geq (3/4+3\gamma_0/4)n$, we would be in the previous case; so suppose not.  Note that for all $v\in Z_1$, we have $\deg_1(v, H_1')\geq 3n/4$, so adding these vertices to $H_1'$ by taking an $\eta$-maximal extension of $H_1'$ and applying Observation \ref{augmentrobust} gives a $(\eta, \alpha)$-robust component $H_1$.  Note that any vertices in $G-H_1$ must be in $Z_2$ and if $|Z_2|\geq \eta^{2/3}n$, then by Claim \ref{Znotsmall}, we would have a $(\eta, \alpha_0)$-robust subgraph with $|H_2'|\geq (3/4+3\gamma_0/4)n$, which we don't have in this case.  So $|Z_2|< \eta^{2/3}n$ and $V(G)\setminus V(H_1)\subseteq Z_2$.  If $H_1$ is not $\alpha^4$-near-bipartite, then we satisfy conclusion (i).

We have shown that either conclusion (i) or (ii.a) holds, but if some $H_i$ is $\alpha^4$-near-bipartite, then additional properties must hold in order to get an $(\eta, \alpha)$-nice partition.  From the cases above, assume that either $|H_1|, |H_2|\geq (3/4+\gamma/2)n$ and without loss of generality $H_1$ is $\alpha^4$-near-bipartite or $|H_1|\geq (1-\eta^{2/3})n$ and $H_1$ is $\alpha^4$-near-bipartite.  Since $H_1$ is $(\eta, \alpha)$-robust and $\alpha^4$-near-bipartite, by Observation \ref{bipartiterobust} there exists an $(\eta/2, \alpha^2)$-bipartition $\{S, T\}$ of $H_1$ with $|S|\leq |T|$ such that $H_1[S,T]$ is $(\eta/2, \alpha/2)$-robust.  Let $T'=\{v\in T:\deg_1(v, T)\geq \alpha n\}$, $S'=\{v\in S:\deg_1(v, S)\geq \alpha n\}$, $U'=\{v\in V(H_1): \deg_1(v, V(G)\setminus V(H_1))\geq \gamma n/4\}$.  Since $e(S), e(T)\leq \alpha^2 n^2$, we have $|S'|, |T'|\leq \alpha n$.  Since every vertex in $V(G)\setminus V(H_1)$ has fewer than $\eta n$ neighbors of color 1 in $H_1$, we have $e_1(V(H_1), V(G)\setminus V(H_1))\leq \eta n^2$ and thus $|U'|\leq \frac{4\eta}{\gamma}n$.  We will show that there exists a set $X$ which contains all of the vertices of $V(G)\setminus V(H_1)$ and most of the vertices of $T$ and which will be contained in our robust component $H_2$ which is not near-bipartite (this will ensure that $H_1, H_2$ will form the $(\eta, \alpha)$-nice partition). 

Precisely, let $X=V(G)\setminus (S\cup T'\cup U')$.  Note that by the bounds on $T'$ and $U'$, we have $|X\cap T|\geq |T|-\frac{4\eta}{\gamma}n-\alpha n\geq (1-\gamma)|T|$; once we show that $X\subseteq V(H_2)$ and $H_2$ is not $\alpha^4$-near-bipartite, this shows that condition (ii) in Definition \ref{nicepartition} is satisfied. In both of the following cases we will need some observations about the degree of vertices in $X$.  Using the fact that either $x\in X\setminus T$ and $H_1$ is $\eta$-maximal, or $x\in X\cap T$ and thus not in $U'$ or $T'$, we have for all $v\in X$, 
\begin{equation}\label{XtoX}
\deg_2(v, X)\geq (3/4+\gamma)n-\gamma n/4-\alpha n-|S\cup T'\cup U'|\geq (3/4+\gamma/2)n-|S|
\end{equation}
 and
\begin{equation}\label{XtoT}
\deg_2(v, X\cap T)\geq (3/4+\gamma)n-\gamma n/4-\alpha n-(n-|H_1|)-|S\cup T'\cup U'|\geq |X\cap T|-(1/4-\gamma/2) n.
\end{equation}

\noindent
\textbf{Case 1} ($|H_1|, |H_2|\geq (3/4+\gamma/2)n$).  First note that since $H_1$ is $\eta$-maximal, for all $v\in X\setminus T$ we have $\deg_2(v)\geq (3/4+\gamma)n-(n-|H_1|)-\eta n\geq (1/2+\gamma)n$, and for all $v\in X\cap T$, by \eqref{XtoX} we have $\deg_2(v)\geq (3/4+\gamma/2)n-|S|\geq (1/4+\gamma/2)n$.  Thus $\deg_2(v, H_2)\geq \gamma n$ for all $v\in X$, and since $H_2$ is $\eta$-maximal we have $X\subseteq V(H_2)$.

So the only thing left to check is that $H_2$ is not $\alpha^4$-near-bipartite.  Note that $|T|\geq |H_1|/2\geq (3/8+\gamma/4)n$.  By \eqref{XtoT}, for all $v\in X\cap T$ we have $\deg_2(v, X\cap T)\geq |X\cap T|-(1/4-\gamma/2) n\geq n/8$, and by \eqref{XtoX}, for all $v,v'\in X\cap T$ we have $|N_2(v,v')\cap X|\geq 2((3/4+\gamma/2)n-|S|)-(n-|S|)=(1/2+\gamma)n-|S|\geq \gamma n$, thus there are at least $\frac{1}{6}|X\cap T|\cdot \frac{n}{8} \cdot \gamma n>\alpha n^3$ triangles in $H_2$ which implies that $H_2$ is not $\alpha^4$-near-bipartite.

\noindent
\textbf{Case 2} ($|H_1|\geq (1-\eta^{2/3})n$).  First note that because of the order of $H_1$ and the definition of $U'$, we have $U'=\emptyset$.  Let $v,v'\in X$; by \eqref{XtoT} and \eqref{XtoX}, we have $|N_2(v, v')\cap X|\geq 2((3/4+\gamma/2)n-|S|)-(n-|S|)\geq (1/2+\gamma)n-|S|\geq \gamma n$.  Thus $G_2[X]$ has the $(1,\gamma)$-connecting property and is $(\gamma, \gamma^2)$-robust by Lemma \ref{connectingrobust}.  Note that since $G_2[X]$ has the $(1,\gamma)$-connecting property, every edge of $G_2[X]$ is contained in at least $\gamma n$ triangles and thus $G_2[X]$ contains at least $\gamma^2n^3/3>\alpha n^3$ triangles.  Taking an $\eta$-maximal extension of $G_2[X]$ gives an $(\eta, \alpha)$-robust subgraph $H_2\subseteq G_2$ which is not $\alpha^4$-near-bipartite.  At this point, $H_1, H_2$ satisfy the necessary conditions to form an $(\eta, \alpha)$-nice partition.  The only thing left to check is that $|H_2|\geq (1/2+\eta^2)n$.  

Suppose $|H_2|< (1/2+\eta^2)n$ and note that we have $(1/2+\eta^2)n>|H_2|\geq n-|S|-|T'|$.
In particular, this implies 
\begin{equation}\label{ST}
(1/2-2\eta^2)n\leq |S|\leq |T|\leq (1/2+2\eta^2)n ~\text{ and }~ |H_1|\geq (1-4\eta^2)n.
\end{equation} 
The above calculations show that $H_1[S,T]$ is a nearly balanced bipartite graph.  We will now show that  most vertices have degree greater than $|S|/2$ or $|T|/2$ respectively.  This will allow us to construct the desired monochromatic cycle partition directly.

\begin{figure}[h]
\centering
\includegraphics{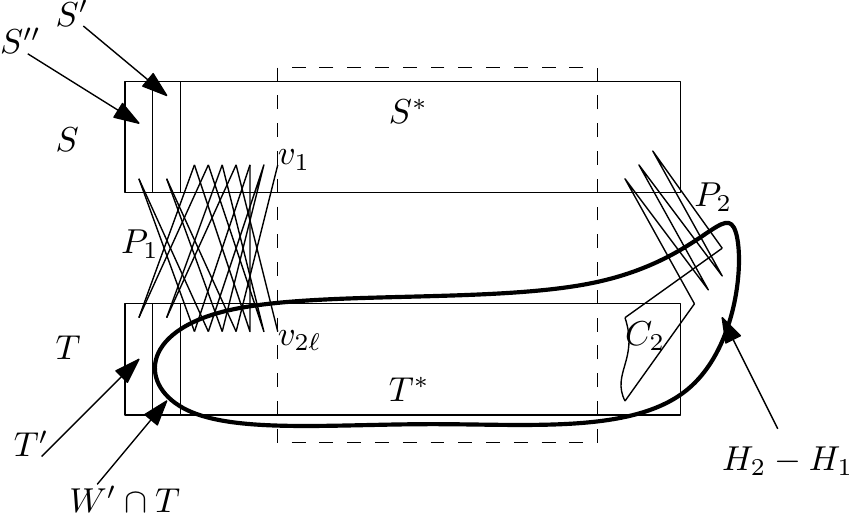}
\caption{Constructing the monochromatic cycle partition directly.}
\label{H1fig}
\end{figure}

Let $W'=\{v\in V(H_2): \deg_2(v, V(H_1)\setminus V(H_2))\geq \gamma n/4\}$ and note that since $H_2$ is $\eta$-maximal, there are fewer than $\eta n^2$ edges of color $2$ between $V(H_2)$ and $V(H_1)\setminus V(H_2)$, and therefore $|W'|\leq \frac{4\eta}{\gamma} n$.  Let $S''=S\cap V(H_2)$ and note that because of the fact that $|V(H_2)\cap T|\geq |T|-|T'|\geq (1/2-3\eta^2)n$ and $|H_2|<(1/2+\eta^2)n$, we have $|S''|<4\eta^2n$. 

\begin{claim}\label{H*}
Let $S^*\subseteq S\setminus (S'\cup S'')$ and $T^*\subseteq T\setminus (T'\cup W')$ such that $|S^*|=|T^*|\geq |S|-\gamma/2 n$.  Then for all $x\in S^*$, $y\in T^*$ there is a Hamiltonian path in $H_1[S^*, T^*]$ having $x$ and $y$ as endpoints (i.e. $H_1[S^*, T^*]$ is Hamiltonian biconnected).
\end{claim}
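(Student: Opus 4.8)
The plan is to reduce Claim~\ref{H*} to a Hamiltonian cycle problem in a balanced bipartite graph and then apply Lemma~\ref{ChvatalDegree}(ii). Fix $x\in S^*$ and $y\in T^*$ and set $m:=|S^*|=|T^*|$. The usual device of adjoining a single vertex joined to $x$ and $y$ would destroy bipartiteness, so instead I would adjoin \emph{two} vertices: a vertex $z_1$ placed on the $T^*$-side and joined only to $x$ and $z_2$, and a vertex $z_2$ placed on the $S^*$-side and joined only to $y$ and $z_1$. Call the resulting balanced bipartite graph $H''$, with parts $S^*\cup\{z_2\}$ and $T^*\cup\{z_1\}$, each of size $m+1$. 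Since $z_1$ and $z_2$ each have degree $2$, every Hamiltonian cycle of $H''$ must traverse the segment $x z_1 z_2 y$; deleting $z_1$ and $z_2$ from such a cycle leaves precisely a Hamiltonian path of $H_1[S^*,T^*]$ from $x$ to $y$. Hence it suffices to produce a Hamiltonian cycle in $H''$.

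To apply Lemma~\ref{ChvatalDegree}(ii) to $H''$, I would establish that every vertex of $S^*$ has at least $m/2+\gamma n/4$ neighbours in $T^*$ inside $H_1$, and every vertex of $T^*$ has at least $m/2+\gamma n/4$ neighbours in $S^*$. Granting this, the degree hypothesis of Lemma~\ref{ChvatalDegree}(ii) is met: the two special vertices $z_1,z_2$ have degree $2$, which settles the index $i=1$, and for $2\le i\le m$ one of the alternatives $\deg(u_i)>i$ or $\deg(v_{(m+1)-i})>(m+1)-i$ always holds because every non-special vertex has degree at least $m/2+\gamma n/4$, $m\le|T|\le(1/2+2\eta^2)n$ by \eqref{ST}, and $\gamma n/4\gg 1$ for $n\ge n_0$.

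For the two degree estimates I would argue separately. If $x\in S^*$ then $x\notin V(H_2)$ (since $S^*\subseteq S\setminus S''$) and $\deg_1(x,S)<\alpha n$ (since $x\notin S'$); using that $H_2$ is $\eta$-maximal and that $V(G)\setminus V(H_2)\subseteq S\cup T'$ (because $U'=\emptyset$ and $X\subseteq V(H_2)$) gives $\deg_2(x)<\eta n+|S|+|T'|$, so $\deg_1(x,T)$ is large, and subtracting $|T\setminus T^*|\le|T|-|S|+\gamma n/2$ yields $\deg_1(x,T^*)\ge m/2+\gamma n/4$ after invoking \eqref{ST}, $|H_1|\ge(1-4\eta^2)n$, and $\eta,\alpha\ll\gamma$. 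If $y\in T^*$ then $y\notin W'$, and since $S^*\subseteq V(H_1)\setminus V(H_2)$ this forces $\deg_2(y,S^*)<\gamma n/4$, whence $\deg_1(y,S^*)=\deg_G(y,S^*)-\deg_2(y,S^*)\ge|S^*|-(1/4-\gamma)n-\gamma n/4\ge m/2+\gamma n/4$, again using \eqref{ST}. The main obstacle is finding the right auxiliary bipartite graph whose Hamiltonian cycles encode Hamiltonian paths with prescribed endpoints, together with the fact that the two degree estimates are genuinely asymmetric — the $S^*$-bound rests on $\eta$-maximality of $H_2$ while the $T^*$-bound rests on the defining property of $W'$ — so both must be checked to land safely above $m/2$; once the construction is in place, the remaining bookkeeping is routine.
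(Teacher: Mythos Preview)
Your proof is correct and follows the same strategy as the paper: both arguments establish that every vertex of $S^*$ has more than $m/2$ red neighbours in $T^*$ (using $S^*\cap V(H_2)=\emptyset$ together with the $\eta$-maximality of $H_2$) and that every vertex of $T^*$ has more than $m/2$ red neighbours in $S^*$ (using the defining property of $W'$), and then appeal to Lemma~\ref{ChvatalDegree}(ii). The only difference is the reduction from ``Hamiltonian path with prescribed ends'' to ``Hamiltonian cycle'': you adjoin two degree-$2$ vertices and apply Chv\'atal once to the enlarged balanced bipartite graph, whereas the paper instead deletes $x$ and $y$, applies Chv\'atal to $H_1[S^*\setminus\{x\},T^*\setminus\{y\}]$, and reinserts $x,y$ into a suitable edge of the resulting cycle by a one-line pigeonhole count; both devices are standard and interchangeable here.
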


\begin{proof}
Indeed, for all $v\in S^*$, using \eqref{ST} we have $\deg_2(v, T)\leq \deg_2(v, H_2)+|T'|\leq (\eta+\alpha)n$ and thus 
$$\deg_1(v, T^*)\geq (3/4+\gamma)n-(n-|T^*|) - \deg_2(v,T) \geq (1/4+\gamma/4)n>|T^*|/2+1.$$
For all $v\in T^*$, we have $\deg_2(v, S^*)\leq \gamma n/4$ and thus 
$$\deg_1(v, S^*)\geq (3/4+\gamma)n-(n-|S^*|) - \deg_2(v,S) \geq (1/4+\gamma/8)n>|S^*|/2+1.$$
So let $x\in S^*$ and $y\in T^*$ and apply Lemma \ref{ChvatalDegree} to $H_1[S^*\setminus \{x\}, T^*\setminus \{y\}]$ to get a Hamiltonian cycle $C'$.  Now using the degree condition there exist consecutive vertices $v_i, v_{i+1}$ on $C'$ such that $x$ is adjacent to $v_{i+1}$ and $y$ is adjacent to $v_i$, showing that $H_1[S^*, T^*]$ is Hamiltonian biconnected.
\end{proof}

Now we build a path $P_1$ and a cycle $C_2$ so that $C_2$ contains all of the vertices in $H_2-H_1$ and its deletion leaves $H_1[S,T]$ balanced and so that $P_1$ contains all of the vertices in $S'\cup S''\cup T'\cup W'$ and has its endpoints outside of $S'\cup S''\cup T'\cup W'$.  We have that $|S''|\leq 4\eta^2 n$, that $|W'|\leq \frac{4\eta}{\gamma} n$ and that $|S'|,|T'|\leq \alpha n$.  Since the the minimum degree in $H_1[S, T]$ is at least $\eta n/2$ and all of the vertices not in $S'\cup S''\cup T'\cup W'$ satisfy  degree conditions from Claim \ref{H*}, we can greedily find a path $P_1=v_1\dots v_{2\ell}$ in $H_1[S, T]$ such that (i) $S''\cup S'\cup W'\cup T'\subseteq V(P_1)$, (ii) $v_1\in S\setminus (S'\cup S'')$ and $v_{2\ell}\in T\setminus (T'\cup W')$, and (iii) $|P_1|\leq 3|S'\cup S''\cup T'\cup W'|$ (see Figure \ref{H1fig}).
For all $v\in V(G)\setminus V(H_1)$, we have by \eqref{ST} that $\deg_2(v, S)\geq (3/4+\gamma)n-\eta n-(n-|S|)\geq (1/4+\gamma/2)n$ and thus $|N_2(v,v')\cap S|\geq \gamma n$ for all $v,v'\in V(G)\setminus V(H_1)$.  So we greedily find a blue path $P_2$ having $|V(G)\setminus V(H_1)|$ vertices in  $V(G)\setminus V(H_1)$ and $|V(G)\setminus V(H_1)|-1$ vertices in $S\setminus V(P_1)$ (with both endpoints in $V(G)\setminus V(H_1)$); note that by \eqref{ST} we have $|P_2|\leq 2\cdot 4\eta^2n-1\leq 8\eta^2n$.  Now, use the fact that $G_2[X]\subseteq H_2$ has the $(1,\gamma)$-connecting property to greedily extend $P_2$ into a cycle $C_2$ using vertices from $T\setminus V(P_1)$ so that $S^*:=S\setminus (\{v_2,\dots, v_{2\ell-1}\}\cup V(C_2))$ and $T^*:=T\setminus (\{v_2,\dots, v_{2\ell-1}\}\cup V(C_2))$ satisfy $|S^*|=|T^*|$.  Note that by \eqref{ST} and the bound on $|P_2|$, we have $|T|-(|S \setminus V(P_2)|)\leq 8\eta^2 n$ and thus $|C_2|\leq 8\eta^2n+|P_2|\leq 16\eta^2 n$.
All together, we have 
$$|P_1|+|C_2|\leq 3|S'\cup S''\cup T'\cup W'|+16\eta^2 n \leq (12\eta^2+12\eta/\gamma+6\alpha+16\eta^2) n\leq \gamma n/2.$$
So, applying Claim \ref{H*} we are able to extend $P_1$ into a cycle $C_1$ completing the desired monochromatic cycle partition.  
\end{proof}

\section{From connected matchings to cycles}\label{sec:proof}

\subsection{Connected matchings}

We will need the following preliminary lemma.

\begin{lemma}\label{partitematching}
Let $n$ be even and $k\geq 2$, and let $G$ be a $k$-partite graph on $n$ vertices with the vertex set partitioned as $\{X_1,X_2,\dots, X_k\}$.  Suppose that $|X_i|\leq \frac{n}{2}$ for all $i\in [k]$.  If $\deg(x)> \frac{3}{4}n-|X_i|$ for all $i\in [k]$ and for all $x\in X_i$, 
then $G$ is connected and contains a perfect matching.  
\end{lemma}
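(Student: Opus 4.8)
The plan is to first rewrite the hypothesis in a convenient form. Since $N(x)\cap X_i=\emptyset$ whenever $x\in X_i$, the condition $\deg(x)>\frac34 n-|X_i|$ says exactly that $|X_i\cup N(x)|>\frac34 n$; equivalently, the number of vertices that lie outside $x$'s part and are non-adjacent to $x$ is less than $n/4$. I will use this repeatedly.

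\emph{Connectivity.} Suppose $G$ is disconnected and let $C$ be a component with $1\le|C|\le n/2$. Since $\delta(G)>n/4\ge 1$, every vertex has a neighbor, so $C$ contains an edge and hence meets at least two parts $X_i,X_j$. For $u\in C\cap X_i$ we have $N(u)\subseteq C$, so $X_i\cup N(u)\subseteq X_i\cup C$ and therefore $|X_i\setminus C|=|X_i\cup C|-|C|\ge|X_i\cup N(u)|-|C|>\frac34 n-|C|$; the same bound holds for $|X_j\setminus C|$. As $X_i\setminus C$ and $X_j\setminus C$ are disjoint subsets of $V(G)\setminus C$, we get $n-|C|>2(\frac34 n-|C|)$, i.e.\ $|C|>n/2$, a contradiction. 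Hence $G$ is connected.

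\emph{Perfect matching.} I would apply Tutte's theorem. If $G$ has no perfect matching then, since $n$ is even, by parity there is $S\subseteq V(G)$ with $o(G-S)\ge|S|+2$; set $t:=|S|$ and let $C_1,\dots,C_r$ be the components of $G-S$, so $r\ge t+2$. The structural input is: (a) if $|C_\ell|\ge2$ then $C_\ell$ meets at least two parts, and for $v\in C_\ell\cap X_i$ the set $X_i\cup N(v)$ lies in $X_i\cup S\cup C_\ell$, so $|X_i\setminus(S\cup C_\ell)|>\frac34 n-t-|C_\ell|$; applying this to two parts met by $C_\ell$ (the corresponding sets being disjoint inside $V(G)\setminus(S\cup C_\ell)$) yields $|C_\ell|>\frac n2-t$. (b) If $C_\ell=\{v\}$ with $v\in X_i$, then $N(v)\subseteq S$, so $t\ge\deg(v)>\frac34 n-|X_i|\ge n/4$; moreover each singleton vertex lies in a part $X$ with $|X\setminus S|>\frac34 n-t$, and running the disjointness count over the distinct such parts shows that all singleton components of $G-S$ lie in one common part $Q$. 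Now I split on $t$: if $t\ge n/2$ then $r\ge t+2>n-t=|V(G)\setminus S|\ge r$, impossible; if $t=0$ then connectivity forces $o(G)=0<2$; and if $1\le t\le n/4$ there are no singletons by (b), so by (a) every component has more than $\frac n2-t$ vertices, whence $n-t=\sum_\ell|C_\ell|>(t+2)(\frac n2-t)$, which rearranges to $t>\frac n2-1$, contradicting $t\le n/4$. This leaves the band $n/4<t<n/2$, where all singletons sit in one part $Q$ with $|Q|\le n/2$ and $|Q\setminus S|>\frac34 n-t$, each non-singleton component has at least three vertices, more than $\frac n2-t$ of them, and at least one vertex outside $Q$, and a singleton $v\in Q$ satisfies $\deg(v)\le t-|S\cap Q|$; combining the count $\sum_\ell|C_\ell|=n-t$, the bound $r\ge t+2$, and $|Q|\le n/2$ should force a contradiction. (The extreme value $t=n/2-1$ is immediate: then $r=n/2+1=|V(G)\setminus S|$ forces all components to be singletons, so $V(G)\setminus S\subseteq Q$ would have size at most $n/2<n/2+1$.)

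I expect the bookkeeping in the band $n/4<t<n/2$ to be the only genuine difficulty; everything else is a one-line disjointness estimate. To streamline that case it may help to take $S$ to be the set $A$ of the Gallai--Edmonds decomposition, so that every component of $G-S$ is factor-critical (hence odd, hence either a singleton or of size at least three), and to sharpen (a) by noting that a component meeting $m$ parts satisfies $|C_\ell|\ge\frac{m(3n/4-t)-(n-t)}{m-1}$, which is much larger than $\frac n2-t$ unless the component meets only two parts.
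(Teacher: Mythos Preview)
Your connectivity argument is correct and clean. For the matching, your route via Tutte's theorem is genuinely different from the paper's proof, which takes an edge-maximal counterexample $G$, observes that $G$ cannot be complete $k$-partite (via a balanced bipartition and Lemma~\ref{ChvatalDegree}), adds a missing edge $e=v_1v_2$ to get a perfect matching $M$ in $G+e$, and then derives a contradiction from a short degree count along $M$. That argument is about ten lines and avoids any case analysis on $|S|$.

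Your Tutte approach is viable, but the case $n/4<t<n/2$ is not yet proved, and the ingredients you list (non-singleton components have $>n/2-t$ vertices and at least one vertex outside $Q$; $|Q|\le n/2$; the singleton degree bound; $r\ge t+2$) are not by themselves sufficient to force a contradiction --- one can write down parameter values consistent with all of them. The missing observation is this: if $C$ is a non-singleton component meeting a part $X_j\neq Q$, then every vertex of $X_j\setminus(S\cup C)$ lies in some \emph{other non-singleton} component (it cannot be a singleton, since all singletons live in $Q$). Hence
\[
3n/4 - t - |C| \;<\; |X_j\setminus(S\cup C)| \;\le\; (n-t-s)-|C|,
\]
which gives $s<n/4$. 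On the other hand, your component-size bound $|C_\ell|>n/2-t$ together with $s+p_o\ge t+2$ yields $s>t$ (write $a=n/2-t\ge 1$ and eliminate $p_o$). Since $t>n/4$ in this band, $s>t>n/4$ contradicts $s<n/4$. (If there are no non-singleton components at all, then $V\setminus S\subseteq Q$ forces $n-t\le n/2$, i.e.\ $t\ge n/2$.) So your plan does go through, but only with this extra disjointness step; the Gallai--Edmonds refinement and the sharpened bound for components meeting $m$ parts are correct but not what closes the gap.
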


\begin{proof}

Consider an edge maximal counterexample $G$.  First note that $G$ cannot be a complete $k$-partite graph (that is, a complete $k$-partite graph with all part sizes at most $n/2$ must have a perfect matching).  Indeed, if $G$ is a complete $k$-partite graph, let $\{Y_1,\dots, Y_k\}=\{X_1,\dots, X_k\}$ such that $|Y_1|\geq\dots\geq |Y_k|$.  Consider a balanced bipartition $\{A_1,A_2\}$ of $V(G)$ which satisfies $Y_1\subseteq A_1$ and $Y_2\subseteq A_2$.  Suppose there exists a vertex $v\in A_j$ such that $\deg(v,A_{3-j})\leq |A_{3-j}|/2=n/4$.  This implies that $v\in Y_i$ for some $i\geq 3$ and $|Y_i|>|Y_i\cap A_{3-j}|\geq |A_{3-j}|/2$ which implies $|Y_{3-j}|\leq |A_{3-j}|/2$; however, $|Y_1|\geq |Y_2|\geq |Y_i|$, a contradiction.  Thus $\delta(G[A_1,A_2])>n/4$ which implies $G[A_1, A_2]$ is connected and has a perfect matching (by applying Lemma \ref{ChvatalDegree}.(ii) for instance).

Since our edge maximal counterexample $G$ cannot be complete, adding any edge between parts gives a perfect matching.  So without loss of generality, let $v_1\in X_1$ and $v_2 \in X_2$ such that $e:=v_1v_2\not\in E(G)$ and let $M$ be a perfect matching in $G+e$.  For all $1\leq i<j\leq k$, let $M_{i,j}$ be the set of edges in $M$ between $X_i$ and $X_j$, let $m_{i,j}:=|M_{i,j}|$, and let $x_i:=|X_i|$.  We first deduce some facts about the sizes of the parts based on the matching $M$.

If we were to delete the vertices of all matching edges inside $X_1\cup X_2$, the remaining vertices in $X_1 \cup X_2$ must be matched by edges in $M \setminus M_{1,2}$ to vertices in $X_3\cup\dots\cup X_k$.  Hence $$x_1 + x_2 - 2m_{1,2} \le |M| - m_{1,2},$$ which rearranging yields
\begin{equation}\label{rearrange}
-m_{1,2} \le |M| - x_1 - x_2.
\end{equation} 

Now remove $e$ and consider $v_1$ and $v_2$. If there was an edge $ab\in M-v_1v_2$ such that $a\in N(v_1)$ and $b\in N(v_2)$, then we would have a perfect matching in $G$; so suppose not.  This implies that $\deg_{G+e}(v_1, e')+\deg_{G+e}(v_2, e')\leq 2$ for all $e'\in M$, and that $\deg_{G+e}(v_1, e')+\deg_{G+e}(v_2,e')\leq 1$ for all $e'\in M_{1,2}-e$.  Using this fact together with \eqref{rearrange} and the minimum degree condition we get,
\begin{align*}
\frac{3}{2}n-x_1-x_2< \deg_G(v_1)+\deg_G(v_2)\leq (m_{1,2}-1) + 2(|M| - m_{1,2})
&\le 3|M| - x_1 - x_2 - 1\\
&=\frac{3}{2}n-x_1-x_2 - 1,
\end{align*}
a contradiction.

Finally to see that $G$ is connected, let $u,v\in V(G)$.  If $u,v\in X_i$ for some $i \in [k]$, then $|N(u)\cap N(v)|>2(3n/4-x_i)-(n-x_i)=n/2-x_i\geq 0$.  So suppose $u\in X_i$ and $v\in X_j$ with $i\neq j$.  If either $v$ has a neighbor $v'$ in $X_i$ or $u$ has a neighbor $u'\in X_j$, then $u$ and $v'$ have a common neighbor or $u'$ and $v$ have a common neighbor.  Otherwise, neither $u$ nor $v$ have neighbors in $X_i \cup X_j$ and thus $|N(u)\cap N(v)|>(3n/4-x_i)+(3n/4-x_j)-(n-x_i-x_j)= n/2$.
\end{proof}

The following lemma will be applied in the reduced graph, and for our purposes it is convenient for us to allow an edge to be colored with both red and blue.  So recall that  $\{E_1, E_2\}$ is a $2$-multicoloring of $G$ if $E_1\cup E_2=E(G)$ (i.e. we allow for $E_1\cap E_2\neq \emptyset$).

\begin{lemma}\label{pureconnectedmatching}
Let $G$ be a graph on $n$ vertices with $n$ even such that $\delta(G)\geq 3n/4$ and let $E_1\cup E_2$ be a $2$-multicoloring of $G$.  For all components $H_1\subseteq G_1$ and $H_2\subseteq G_2$, if 
\begin{enumerate}
\item $|H_1|, |H_2|\geq 3n/4$ and $V(G)=V(H_1)\cup V(H_2)$, or

\item $|H_i|=n$ and $H_{3-i}$ is the largest component of $G_{3-i}$,
\end{enumerate}
then $G$ contains a perfect matching $M\subseteq E(H_1)\cup E(H_2)$.  Furthermore, if $|H_i|=n$ and $|H_{3-i}|\leq n/2$, then $M\subseteq E(H_i)$.
\end{lemma}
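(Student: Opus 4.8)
The plan is to reduce both cases of the lemma to Lemma~\ref{partitematching} in a uniform way. Write the components of $G_1$ as $R_0,R_1,\dots,R_q$ and those of $G_2$ as $L_0,L_1,\dots,L_p$, with $R_0=H_1$ and $L_0=H_2$ (in case (ii) with $|H_1|=n$ this just means $R_0=G_1$). Put $A:=V(G)\setminus V(H_2)=\bigcup_{i\ge 1}L_i$, $B:=V(G)\setminus V(H_1)=\bigcup_{j\ge 1}R_j$, and $C:=V(H_1)\cap V(H_2)$. Since $V(G)=V(H_1)\cup V(H_2)$ (automatic in case (ii)), the sets $A,B,C$ partition $V(G)$; in case (i) the degree hypothesis gives $|A|,|B|\le n/4$ and $|C|\ge n/2$, whereas in case (ii) we have $B=\emptyset$, $C=V(H_2)$, and (using that $H_2$ is the largest component of $G_2$) $|L_i|\le\min\{|H_2|,\,n-|H_2|\}\le n/2$ for every $i\ge 1$.

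The crucial observation is about which edges of $G$ lie in $E(H_1)\cup E(H_2)$. Any two vertices in distinct components of $G_2$ are joined only by an edge of color $1$, so (as $L_i\subseteq V(H_1)$ for $i\ge 1$ and $H_1$ is a $G_1$-component) every edge of $G$ with one endpoint in $L_i$ and the other outside $L_i$ has color $1$ and lies in $E(H_1)$; symmetrically, every edge leaving some $R_j$ with $j\ge1$ has color $2$ and lies in $E(H_2)$. Moreover every edge incident to a vertex of $C$ lies in $E(H_1)\cup E(H_2)$, since its color-$1$ instance is in $E(H_1)$ and its color-$2$ instance is in $E(H_2)$. (In particular $A$ and $B$ span no edge.)

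Next I would take the partition $\mathcal P$ of $V(G)$ with parts $L_1,\dots,L_p$, $R_1,\dots,R_q$, and $C$, where in case (i) and in case (ii) with $|H_2|>n/2$ the set $C$ is first split into two nearly equal halves so that every part has size at most $n/2$, while in case (ii) with $|H_2|\le n/2$ I would instead let $\mathcal P$ be the set of all components of $G_2$ (keeping $C=L_0$ as one part, so that no edge inside $C$ is used). Then $|\mathcal P|\ge 2$ and each part has at most $n/2$ vertices. Let $\widehat G$ be the spanning subgraph of $G$ whose edges are exactly the edges of $E(H_1)\cup E(H_2)$ joining two distinct parts of $\mathcal P$. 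By the previous paragraph, for every vertex $x$ lying in a part $P$, \emph{every} edge of $G$ from $x$ to $V(G)\setminus P$ lies in $E(H_1)\cup E(H_2)$, hence
\[
\deg_{\widehat G}(x)=\deg_G(x)-\deg_G(x,P)\ \ge\ \tfrac34 n-(|P|-1)\ >\ \tfrac34 n-|P|.
\]
Thus $\widehat G$ with the partition $\mathcal P$ meets all hypotheses of Lemma~\ref{partitematching}, which yields a perfect matching $M$ of $\widehat G$; since $E(\widehat G)\subseteq E(H_1)\cup E(H_2)$, this is the required matching. For the final assertion: when $|H_1|=n$ and $|H_2|\le n/2$, the parts of $\mathcal P$ are the components of $G_2$, so every edge of $\widehat G$ joins two of them and is therefore of color $1$, and since $H_1=G_1$ we get $M\subseteq E(H_1)$.

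I expect essentially all of the work to be in the structural observation of the second paragraph, and in checking there that for a vertex $x$ in a part $P$ \emph{every} edge leaving $P$ belongs to $E(H_1)\cup E(H_2)$: this is what makes it costless to discard the within-part edges that the $k$-partite hypothesis of Lemma~\ref{partitematching} forbids, and it is what makes the sharp bound $\deg_G(x)-|P|+1$ available. It relies on two points — that an edge between distinct components of $G_i$ must have color $3-i$, and that every edge at a vertex of $C=V(H_1)\cap V(H_2)$ lands in one of $E(H_1),E(H_2)$ — after which the remaining issues (whether and how to split $C$, and degenerate cases such as $A=\emptyset$ or $B=\emptyset$) are routine bookkeeping.
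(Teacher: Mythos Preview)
Your argument is correct. The key structural observation---that every edge leaving a part $P\in\mathcal P$ automatically lies in $E(H_1)\cup E(H_2)$---is verified exactly as you outline, and once this is in hand the degree bound $\deg_{\widehat G}(x)>\tfrac34 n-|P|$ and the size bound $|P|\le n/2$ follow immediately, so Lemma~\ref{partitematching} applies.

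Your route differs from the paper's. The paper treats the three situations separately: in case (i) it deletes the wrong-colored edges inside $V(H_i)\setminus V(H_{3-i})$, observes that the resulting graph still has minimum degree at least $n/2$, and invokes Dirac's theorem to get a Hamiltonian cycle (hence a perfect matching); in case (ii) with $|H_2|>n/2$ it does the same but needs Chv\'atal's degree-sequence theorem instead of Dirac; only in case (ii) with all blue components of size at most $n/2$ does it appeal to Lemma~\ref{partitematching}. Your approach unifies all three cases under a single application of Lemma~\ref{partitematching} by artificially bisecting $C$ when it is too large. This is tidier---it dispenses with Chv\'atal's theorem entirely and makes Lemma~\ref{partitematching} the sole engine---at the cost of the extra bookkeeping of splitting $C$ and verifying that edges between the two halves of $C$ are still admissible (which they are, since every edge at a vertex of $C$ lies in $E(H_1)\cup E(H_2)$). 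The paper's approach has the minor advantage that in the first two subcases it actually produces a Hamiltonian cycle, not just a matching, though that extra strength is not used downstream.
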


\begin{proof}

Without loss of generality, suppose $|H_1|\geq |H_2|$.  Let $G'$ be the $2$-multicolored graph obtained from $G$ by doing the following to each edge $e$ which has both endpoints in $V(H_i)\setminus V(H_{3-i})$ for some $i\in [2]$: if $e$ is colored with both $1$ and $2$, remove color $3-i$; if $e$ is only colored with $3-i$, then delete $e$.

\noindent
\textbf{Case 1} ($|H_1|,|H_2|\geq 3n/4$ and $V(G)=V(H_1)\cup V(H_2)$).  
Since $|V(H_i)\setminus V(H_{3-i})|\leq n/4$ for $i\in [2]$, we have $\delta(G')\geq n/2$ and thus Lemma \ref{ChvatalDegree} (or more simply, Dirac's theorem) implies that $G'$ has a Hamiltonian cycle.  The edges of the Hamiltonian cycle lie entirely inside $E(H_1)\cup E(H_2)$ giving us the desired matching.

\noindent
\textbf{Case 2} ($|H_1|=n$)  Suppose first that $G_2$ contains a component $H_2$ such that $|H_2|> n/2$.  Every vertex not in $H_2$ has degree at least $3n/4-(n/2-1)>n/4$ in $G'$ and every vertex in $H_2$ has degree at least $3n/4$ in $G'$, thus Lemma \ref{ChvatalDegree} implies that $G'$ has a Hamiltonian cycle, as before.

So let $X_1, \dots, X_k$ be all of the blue components in $G_2$ (some may be singletons), and suppose that $|X_i|\leq n/2$ for all $i\in [k]$.  
Consider the multipartite graph $G''\subseteq H_1$ consisting only of edges going between the $X_i$'s.  Observe that $G''$ satisfies the conditions of Lemma \ref{partitematching}:  for all $i\in [k]$ and for all $v\in X_i$, we have $\deg_{G''}(v)\geq 3n/4-(|X_i|-1)$.  Hence we obtain a perfect matching which is contained in $E(H_1)$ (i.e., consisting entirely of red edges).  
\end{proof}

Finally we state the lemma which allows us to turn the connected matching in the reduced graph into the cycle in the original graph. Some variant of this lemma, first introduced by \L uczak \cite{Lu}, has been utilized by many authors (\cite{BBGGS}, \cite{GyRSS1}, \cite{GyRSS2}, \cite{GyS}, \cite{LRS}).  See Lemma 2.2 in \cite{BLSSW} for the variant of \L uczak's lemma which is used to build the nearly spanning paths in each pair (in place of the much stronger blow-up lemma).  

\begin{lemma}\label{matchingcycle}
Let $0<\ep\ll d,\rho$ and let $\Gamma$ be an $(\ep, d)$-reduced graph of a $2$-colored graph $G$.  Assume that there is a monochromatic connected matching $\cM$ saturating at least $c|V(\Gamma)|$ vertices of $\Gamma$, for some positive constant $c$.  If $U\subseteq V(G)$ is the set of vertices spanned by the clusters in $\cM$, then there is a monochromatic cycle in $G$ covering at least $c(1-6\sqrt{\ep})n$ vertices of $U$.  Furthermore, if $R, S\subseteq U$ with $|R|, |S|\geq \rho n$, then there is a monochromatic path in $G$ covering at least $c(1-6\sqrt{\ep})n$ vertices of $U$ which has one endpoint in $R$ and the other endpoint in $S$.
\end{lemma}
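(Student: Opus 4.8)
The plan is to run \L uczak's connected-matching argument \cite{Lu} inside the two-colored reduced graph, treating the single-pair step (producing a nearly spanning path with prescribed endpoints inside one regular pair) as a black box, namely Lemma 2.2 of \cite{BLSSW}; this black box is exactly where the loss $6\sqrt{\ep}$ comes from.

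\textbf{Setup.} First I would assume without loss of generality that $\cM$ is red, so there is a connected subgraph $\cC$ of the red reduced graph $\Gamma_1$ containing every edge of $\cM$; write $\cM=\{A_1B_1,\dots,A_tB_t\}$, so $U=\bigcup_s(A_s\cup B_s)$. Since every cluster has the common size $|V_1|$ and $\cM$ saturates at least $c|V(\Gamma)|$ of the $k=|V(\Gamma)|$ clusters, $|U|\ge c(1-\ep)n$. Using a spanning tree of $\cC$ and the closed walk traversing each tree edge twice, I would extract a cyclic list $e_1,\dots,e_t$ of the matching edges (each oriented $A_s\to B_s$ after possibly swapping labels) together with short walks $Q_s$ in $\cC$ from $B_s$ to $A_{s+1}$ (indices mod $t$), each of length at most $2k$. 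Since $k$ is a constant, only a bounded set $R^{\mathrm{con}}$ of ``connector'' vertices (at most $O(k^2)$ per cluster, all avoiding $V_0$) will ever be needed; I would set $R^{\mathrm{con}}$ aside first.

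\textbf{Inside each pair.} For a fixed matching edge $A_sB_s$, after deleting the reserved connectors from it (this is $\ll\ep|V_1|$ vertices) and, if necessary, discarding $\le\ep|V_1|$ vertices from the larger side to rebalance, the pair is still $2\ep$-regular in $G_1'$ with density $\ge d/2$ because $\ep\ll d$. Then I would invoke the single-pair lemma (Lemma 2.2 of \cite{BLSSW}, a variant of \L uczak's lemma) to obtain a red path $L_s\subseteq A_s\cup B_s$ with two prescribed typical endpoints $x_s\in A_s$, $y_s\in B_s$ covering all but at most $6\sqrt{\ep}|V_1|$ of the surviving vertices of the pair.

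\textbf{Gluing and prescribed endpoints.} Each walk $Q_s$ would be realized as a genuine red path in $G$ from $y_s$ to $x_{s+1}$ by greedily picking typical vertices of $R^{\mathrm{con}}$ in its intermediate clusters (possible since its consecutive clusters span $\ep$-regular density-$\ge d$ red pairs). Concatenating $L_1,Q_1,L_2,\dots,L_t,Q_t$ yields one red cycle missing at most $6\sqrt{\ep}|V_1|\cdot t+|R^{\mathrm{con}}|\le 6\sqrt{\ep}n$ vertices of $U$, giving the first statement. For the ``furthermore'' part, since $R,S\subseteq U$ have size $\ge\rho n\gg\ep n$, each meets some cluster of $\cM$ in many vertices; when building the $L_s$ that lives in the pair meeting $R$ (resp.\ $S$) I would choose one of its two endpoints to lie in $R$ (resp.\ $S$) instead of a connector, and then delete the two connecting segments incident to those endpoints, turning the cycle into the desired path. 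The degenerate configurations — $t=1$, or $R$ and $S$ meeting the same cluster or the two clusters of one matching edge — I would dispatch directly with the single-pair lemma applied with both endpoints prescribed.

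\textbf{Main obstacle.} The only nontrivial ingredient is the single-pair path lemma with prescribed typical endpoints, which is quoted from \cite{BLSSW}; after that, the work is bookkeeping — keeping $R^{\mathrm{con}}$ negligible compared with $\ep|V_1|$, ensuring every connector and every prescribed endpoint is typical so the single-pair lemma applies, and checking the degenerate cases.
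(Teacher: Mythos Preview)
Your proposal is correct and follows exactly the approach the paper points to. The paper does not give its own proof of this lemma; it states it as a standard tool, citing \L uczak \cite{Lu} for the connected-matching method, Lemma~2.2 of \cite{BLSSW} for the single-pair nearly-spanning-path step, and Lemma~3.5 of \cite{BBGGS} for the details of the ``furthermore'' clause with prescribed endpoints. Your sketch reproduces precisely this standard argument with those same black boxes.
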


When we apply Lemma \ref{matchingcycle}, there will be an existing path $P^*=v_1\dots v_k$ having the property that $R=N(v_1)\cap U$ and $S=N(v_k)\cap U$ with $|R|, |S|\geq \eta n\gg \ep n$ and thus we can find a path with one endpoint in $R$ and the other endpoint in $S$ giving a cycle which contains $P^*$ as a segment. We refer the reader to Lemma 3.5 in \cite{BBGGS} for more details.

\subsection{Proof of Theorem \ref{mainapprox}}

\begin{proof}
Let $\gamma>0$ be given and choose constants satisfying $\frac{1}{n_0}\ll \ep\ll d\ll \rho\ll \alpha\ll \eta\ll \gamma$.  Let $G$ be a $2$-colored graph on $n\geq n_0$ vertices with $\delta(G)\geq (3/4+\gamma)n$. Apply Lemma \ref{robuststructure} to $G$.  Either we directly obtain the desired monochromatic cycle partition in which case we are done, or we obtain an $(8\eta, 8\alpha)$-robust subgraph $H_1$ with $|H_1|\geq (1-\eta^{2/3})n$, or we obtain an $(8\eta, 8\alpha)$-nice partition consisting of $(8\eta, 8\alpha)$-robust subgraphs $H_1$ and $H_2$.  Either way, set $Z=V(G)\setminus V(H_1)$.  In the following paragraph, apply statements regarding $H_2$ only if $H_2$ has been defined according to the case we are in.

If $H_2$ exists (i.e. Lemma \ref{robustpartition}.(ii) holds), then without loss of generality, suppose $|H_1|\geq |H_2|$.  If $H_1$ is not $\alpha^4$-near bipartite, apply Lemma \ref{lemma:absorbing} to $H_1$ to get an absorbing path $P_1^*$ with $|P_1^*|\leq \rho n$.  By Observation \ref{slicerobust}, $H_2-P_1^*$ is $(4\eta, 4\alpha)$-robust. If $H_1$ is $\alpha^4$-near-bipartite, then by Observation \ref{bipartiterobust}, $H_1$ has a spanning bipartite subgraph $H_1[X_1,Y_1]$ with $|X_1|\leq |Y_1|$ which is $(4\eta, 4\alpha)$-robust and by Lemma \ref{robuststructure} and Definition \ref{nicepartition}, $H_2$ has the property that $H_2$ is not $\alpha^4$-near-bipartite, $|H_2|\geq (1/2+\eta^2)n$, and $|V(H_2)\cap Y_1|\geq \eta^{1/2}n$.  Apply Lemma \ref{lemma:absorbing} to $H_1[X_1,Y_1]$ to get an absorbing path $P_1^*$ with $|P_1^*|\leq \rho n$, furthermore let $S_1\subseteq X_1$ and  $T_1\subseteq Y_1\cap V(H_2)$ such that $|T_1|=\ceiling{6\rho^4 n}$ and $|S_1|=\floor{2\rho^4 n}$.  Note that the role of the sets $S_1$, $T_1$ is to ensure that in the case that $H_1$ is $\alpha^4$-near-bipartite, all of the vertices which cannot be absorbed into $P_1^*$ will be in $H_2$ (which is guaranteed to not be $\alpha^4$-near-bipartite) and thus can be absorbed by some $P_2^*$ in $H_2$.  The details will be made explicit below.  We split the proof into two main cases, essentially corresponding to the two main conclusions of Lemma \ref{robuststructure}.

\noindent
\textbf{Case 1} ($|H_1|\geq (1-\eta^{2/3})n$ and $H_1$ is not $\alpha^4$-near bipartite) (See Lemma \ref{robuststructure}.(i))  Note that by Lemma \ref{robuststructure}, the vertices in $Z$ have red degree less than $8\eta n$ to $V(H_1)$ and thus have blue degree at least $(3/4+\gamma/2)n$ to $V(H_1)$.  We further split into cases depending on whether or not $|Z|$ is large enough to apply regularity with $Z$ as an initial part of the partition or not.

\textbf{Case 1a} ($|Z|<3\rho n$)  If $|Z|>0$, use the fact that the vertices in $Z$ have large blue degree to greedily build a blue path $P_2'$ on exactly $2|Z|-1< 6\rho n$ vertices which contains all the vertices of $Z$, avoids all previously used vertices, and which has both endpoints in $Z$ (if $|Z|=1$, then $P_2'$ will consist of a single vertex). 

Let $G':=G-P_1^*-P_2'$ and let $H_1':=H_1-P_1^*-P_2'$; note that by Observation \ref{slicerobust}, $H_1'$ is $(2\eta, 2\alpha)$-robust.  Note that $\mathcal{Q}=\{V(H_1')\}$ forms a partition of $V(G')$ into sets of size at least $\rho^4 n$ and $\mathcal{Q}$ is clearly non-empty.

Apply Lemma \ref{2colordegreeform} to get a partition $\{V_0, V_1, \dots, V_{2k}\}$ of $G'$ respecting the partition $\mathcal{Q}$ and let $\Gamma$ be the $(\ep, d)$-reduced graph on $2k$ vertices as defined in Definition \ref{def:reduced}.  By Lemma \ref{connectedcomponent}, the graph $\cH_1$ induced by the clusters inside $V(H_1')$ is connected in $\Gamma_1$; note that $V(\cH_1) = V(\Gamma)$.  Let $\cH_2$ be the largest component in $\Gamma_2$.  By Lemma \ref{reduceddegree} and since each $\cH_i$ is maximal, we may apply Lemma \ref{pureconnectedmatching} to get a perfect matching $\cM$ which is contained in $E(\cH_1)\cup E(\cH_2)$; note that if $|\cH_2|\leq |\Gamma|/2$, then the matching is entirely red. In this case, we complete the path $P_2'$ into a cycle $C_2'$ by choosing a common neighbor of the endpoints.  Otherwise, the subgraph $\hat{H}_2$ in $G_2$ which contains the clusters from $\cH_2$ has order at least $(1/2-\gamma/2)n$ and thus the endpoints of $P_2'$ have at least $(1/4+\gamma)n$ neighbors in $\hat{H}_2$.  Thus we can apply Lemma \ref{matchingcycle}, to get cycles $C_1'$, $C_2'$ (containing $P_1^*$ and $P_2'$ respectively) covering all but at most $6\sqrt{\ep} n$ vertices of $G'$. Denote the leftover vertices of $G$ by $W$ and note that $V_0\subseteq W$.  Since we are in the case where $H_1$ is not $\alpha^4$-near-bipartite and all of the vertices from $W$ are contained in $H_1$, they can be absorbed into $P_1^*$.

\textbf{Case 1b} ($|Z|\geq 3\rho n$)  In this case there are too many vertices in $Z$ to deal with greedily, but enough so that we will be able to apply regularity with $Z$ as an initial part of the partition.  However, we still need to deal the leftover vertices from $Z$ in some way.  By Chernoff, there exists a set $R\subseteq V(H_1-P_1^*)$ of size $\rho^4n$ such that for all $u,v\in Z$, we have $|(N_2(u)\cap N_2(v))\cap R|\geq (1/2+\gamma/2)|R|$.  Set $G':=G-P_1^*-R$ and $H_1':=H_1-P_1^*-R$.  Note that $\mathcal{Q}=\{V(H_1'), Z\}$ partitions $V(G')$ into sets of size at least $\rho^4 n$ and is clearly non-empty.

Apply Lemma \ref{2colordegreeform} to get a partition $\{V_0, V_1, \dots, V_{2k}\}$ of $G'$ respecting  $\mathcal{Q}$ and let $\Gamma$ be the $(\ep, d)$-reduced graph on $2k$ vertices as defined in Definition \ref{def:reduced}.  By Lemma \ref{connectedcomponent}, the graph $\cH_1'$ induced by the clusters inside $V(H_1')$ is connected in $\Gamma_1$.  For $i\in [2]$ let $\cH_i$ be the largest component of color $i$ in $\Gamma_i$.  We have $|\cH_1|\geq 3|\Gamma|/4$, and $|\cH_2|\geq 3|\Gamma|/4$ as the clusters in $Z$ have blue degree greater than $3|\Gamma|/4$ (by Lemma \ref{reduceddegree}).  This in particular implies that $V(\cH_1')\subseteq V(\cH_1)$.  
By Lemma \ref{reduceddegree} and since each $\cH_i$ is maximal, we may apply Lemma \ref{pureconnectedmatching} to get a perfect matching $\cM$ which is contained in $E(\cH_1)\cup E(\cH_2)$.  Now we apply Lemma \ref{matchingcycle} to get a red cycle $C_1'$ containing $P_1^*$ and a blue path $P_2'$ with both endpoints in $Z$ covering all but at most $6\sqrt{\ep} n$ vertices of $G'$ which we denote by $W_0$ and note that $V_0\subseteq W_0$.  

Now we use $R$ to greedily complete our blue path $P_2'$ into a cycle $C_2$ which uses all of the remaining vertices from $Z$; note that this is possible since every pair of leftover vertices has at least $\gamma|R|/2\geq \gamma \rho^4 n/2\gg 6\sqrt{\ep} n$ common neighbors in $R$.  Finally, let $W$ be the remaining vertices from $W_0$ and $R$, which are all contained in $V(H_1)$ and note that $|W|\leq |R|+|W_0|\leq 6\sqrt{\ep} n+\rho^4n < \rho^3n$.  So all of the remaining vertices can be absorbed into $P_1^*$, thus completing the cycle partition.

\begin{figure}[ht]
\centering
\subfloat[Case 2a]{\label{5cases2}
\scalebox{.64}{\input{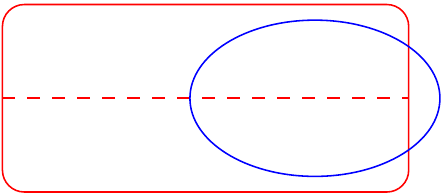_t}}
}~~~~~~~~
\subfloat[Case 2b]{\label{5cases1}
\scalebox{.64}{\input{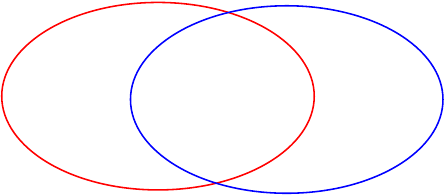_t}~~~\input{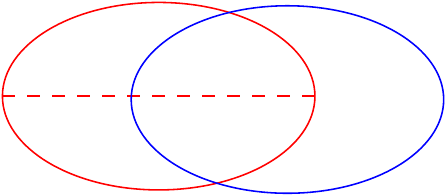_t}~}
}

\caption[]{Case 2 in the proof of Theorem \ref{mainapprox}.  The dashed lines represent a near-bipartite component.}
\end{figure}

\textbf{Case 2} ($|H_1|< (1-\eta^{2/3})n$ or $H_1$ is $\alpha^4$-near bipartite) (See Lemma \ref{robuststructure}.(ii))  
The first subcase deals with when $H_1$ is near-bipartite and $|Z|$ is very small (Lemma \ref{robuststructure}.(ii.b)) and the second subcase deals with the case when $|Z|$ is not too small (Lemma \ref{robuststructure}.(ii.a,b)).

\textbf{Case 2a} ($|Z|<3\rho n$; i.e. $|H_1|> (1-3\rho)n$ and $H_1$ is $\alpha^4$-near bipartite) If $|Z|>0$, use the fact that the vertices in $Z$ have large blue degree to greedily build a blue path $P_2'$ on exactly $2|Z|-1< 6\rho n$ vertices which contains all the vertices of $Z$, avoids all previously used vertices, and which has both endpoints in $Z$ (if $|Z|=1$, then $P_2'$ will consist of a single vertex).  Finally, we find an absorbing path $P_2^*$ in $H_2-P_2'-P_1^*$.  Then we use the the connecting property of $H_2$ to connect one end of $P_2'$ to one end of $P_2^*$ forming a path $\hat{P}_2$ with one endpoint in $H_2$ and the other in $Z$. 

Let $G':=G-P_1^*-\hat{P}_2$ and for $i\in [2]$, let $H_i':=H_i-P_1^*-\hat{P}_2$; note that by Observation \ref{slicerobust}, $H_i'$ is $(2\eta, 2\alpha)$-robust.  Let $A:=V(H_1')\setminus V(H_2')$ and $B:=V(H_1')\cap V(H_2')$ (see Figure \ref{5cases2}).  If $|A|>\rho^4n$, then set $W':=\emptyset$ and $\mathcal{Q}=\{A,B\}$. If $|A|\leq \rho^4n$, then set $W':=A$ and $\mathcal{Q}=\{B\}$.  Finally set $G'':= G'-W'$ and $H_i'':=H_i'-W'$ for $i\in [2]$ and note that $H_i''$ is $(\eta, \alpha)$-robust by Observation \ref{slicerobust}.  Note that $\mathcal{Q}$ forms a partition of $G''$ into sets of size at least $\rho^4 n$ and $\mathcal{Q}$ is clearly non-empty.

Apply Lemma \ref{2colordegreeform} to get a partition $\{V_0, V_1, \dots, V_{2k}\}$ of $G''$ respecting the partition $\mathcal{Q}$ and let $\Gamma$ be the $(\ep, d)$-reduced graph on $2k$ vertices as defined in Definition \ref{def:reduced}.  By Lemma \ref{connectedcomponent}, the graph $\cH_i''$ induced by the clusters inside $V(H_i'')$ is connected in $\Gamma_i$ for $i\in [2]$.  For $i\in [2]$ let $\cH_i$ be the largest component of color $i$ in $\Gamma_i$.  So $V(\cH_1'') = V(\cH_1) = V(\Gamma)$.  We also have $|\cH_2|> |\Gamma|/2$ (see the discussion preceding Case 1), which in particular implies that and $V(\cH_2'')\subseteq V(\cH_2)$.  By Lemma \ref{reduceddegree} and since each $\cH_i$ is maximal, we may apply Lemma \ref{pureconnectedmatching} to get a perfect matching $\cM$ which is contained in $E(\cH_1)\cup E(\cH_2)$.
Note that the subgraph $\hat{H}_2$ in $G_2$ which contains the clusters from $\cH_2$ has order at least $n/2$ and contains all but at most $\ep n$ vertices of $H_2'$, and thus the endpoints of $\hat{P}_2$ have at least $\eta n$ neighbors in $\hat{H}_2$.  Thus we can apply Lemma \ref{matchingcycle}, to get cycles $C_1'$, $C_2'$ (containing $P_1^*$ and $\hat{P}_2$ respectively) covering all but at most $6\sqrt{\ep} n$ vertices of $G''$; denote the leftover vertices  of $G''$ by $W_0$ and note that $V_0\subseteq W_0$.  
  
Set $W:=W'\cup W_0$ and note that $|W|= |W'|+|W_0|\leq \rho^4n+6\sqrt{\ep} n< \rho^3n$.  Since $H_1$ is $\alpha^4$-near-bipartite, let $S_1'\subseteq S_1$ such that $|S_1'|=|W\cap Y_1|$ and let $T_1'\subseteq T_1$ such that $|T_1'|=|(S_1\setminus S_1')\cup (W\cap X_1)|$.  Since $|W\cap Y_1|+|T_1'|=|S_1'|+|(S_1\setminus S_1')\cup (W\cap X_1)|\leq 4\rho^4n$, these vertices can be absorbed into $P_1^*$ and the remaining vertices from $(T_1\setminus T_1')\cup (W\setminus V(H_1))$ can be absorbed into $P_2^*$.

\textbf{Case 2b} ($|Z|\geq 3\rho n$)  If $H_2$ is not $\alpha^4$-near-bipartite, apply Lemma \ref{lemma:absorbing} to $H_2-P_1^*$ to get an absorbing path $P_2^*$ with $|P_2^*|\leq \rho n$.  If $H_2$ is $\alpha^4$-near-bipartite, then by Observation \ref{bipartiterobust}, $H_2$ has a spanning bipartite subgraph $H_2[X_2,Y_2]$ with $|X_2|\leq |Y_2|$ which is $(4\eta, 4\alpha)$-robust and by Lemma \ref{robuststructure} and Definition \ref{nicepartition}, $H_1$ has the property that $H_1$ is not $\alpha^4$-near-bipartite and $|V(H_1)\cap Y_2|\geq \eta^{1/2}n$.  Apply Lemma \ref{lemma:absorbing} to $H_2[X_2,Y_2]$ to get an absorbing path $P_2^*$ with $|P_2^*|\leq \rho n$, furthermore let  $S_2\subseteq X_2$  and $T_2\subseteq Y_2\cap V(H_1)$ such that $|T_2|=\ceiling{6\rho^4 n}$ and $|S_2|=\floor{2\rho^4 n}$.

Now we proceed almost exactly as before. Let $G':=G-P_1^*-P_2^*$ and for $i\in [2]$, let $H_i':=H_i-P_1^*-P_2^*-S_i-T_i$ (where $S_i, T_i\neq \emptyset$ if and only if $H_i$ is $\alpha^4$-near-bipartite); note that by Observation \ref{slicerobust}, $H_i'$ is $(2\eta, 2\alpha)$-robust.  Let $A:=V(H_1')\setminus V(H_2')$, $B:=V(H_1')\cap V(H_2')$, and $C:=V(H_2')\setminus V(H_1')$ (see Figure \ref{5cases1}).  If $|A|>\rho^4n$, then set $W':=\emptyset$ and $\mathcal{Q}=\{A,B,C\}$. If $|A|\leq \rho^4n$, then set $W':=A$, $\mathcal{Q}=\{B,C\}$.  Finally set $G'':= G'-W'$ and $H_i'':=H_i'-W'$ for $i\in [2]$ and note that $H_i''$ is $(\eta, \alpha)$-robust by Observation \ref{slicerobust}.  Note that $\mathcal{Q}$ forms a partition of $G''$ into sets of size at least $\rho^4 n$ and $\mathcal{Q}$ is clearly non-empty.

Apply Lemma \ref{2colordegreeform} to get a partition $\{V_0, V_1, \dots, V_{2k}\}$ of $G''$ respecting the partition $\mathcal{Q}$ and let $\Gamma$ be the $(\ep, d)$-reduced graph on $2k$ vertices as defined in Definition \ref{def:reduced}.  By Lemma \ref{connectedcomponent}, the graph $\cH_i''$ induced by the clusters inside $V(H_i'')$ is connected in $\Gamma_i$ for $i\in [2]$.  For $i\in [2]$ let $\cH_i$ be the largest component of color $i$ in $\Gamma_i$. We have $|\cH_1|\geq 3|\Gamma|/4$ and $|\cH_2|\geq 3|\Gamma|/4$ as the clusters in $V(H_2'')\setminus V(H_1'')$ have degree greater than $3|\Gamma|/4$ (by Lemma \ref{reduceddegree}).  This in particular implies that $V(\cH_i'')\subseteq V(\cH_i)$ as $|V(\cH_i'')|>|\Gamma|/2$ for $i\in [2]$.  
By Lemma \ref{reduceddegree} and since each $\cH_i$ is maximal, we may apply Lemma \ref{pureconnectedmatching} to get a perfect matching $\cM$ which is contained in $E(\cH_1)\cup E(\cH_2)$.  Now we apply Lemma \ref{matchingcycle} to get cycles $C_1'$, $C_2'$ (containing $P_1^*$ and $P_2^*$ respectively) covering all but at most $6\sqrt{\ep} n$ vertices of $G''$ which we denote by $W_0$ and note that $V_0\subseteq W_0$.  

Set $W:=W'\cup W_0$ and note that $|W|= |W'|+|W_0|\leq \rho^4n+6\sqrt{\ep} n< \rho^3n$. If we are in the case where $H_1$ and $H_2$ are not $\alpha^4$-near-bipartite, the vertices from $W$ which are contained in $H_1$ can be absorbed into $P_1^*$ and the remaining vertices from $W$ which are contained in $H_2$ can be absorbed into $P_2^*$.  If $H_1$ is $\alpha^4$-near-bipartite, then let $S_1'\subseteq S_1$ such that $|S_1'|=|W\cap Y_1|$ and let $T_1'\subseteq T_1$ such that $|T_1'|=|(S_1\setminus S_1')\cup (W\cap X_1)|$.  Since $|W\cap Y_1|+|T_1'|=|S_1'|+|(S_1\setminus S_1')\cup (W\cap X_1)|\leq 4\rho^4n$, these vertices can be absorbed into $P_1^*$ and the remaining vertices from $(T_1\setminus T_1')\cup (W\setminus V(H_1))$ can be absorbed into $P_2^*$.   If $H_2$ is $\alpha^4$-near-bipartite, then let $S_2'\subseteq S_2$ such that $|S_2'|=|W\cap Y_2|$ and let $T_2'\subseteq T_2$ such that $|T_2'|=|(S_2\setminus S_2')\cup (W\cap X_2)|$.  Since $|W\cap Y_2|+|T_2'|=|S_2'|+|(S_2\setminus S_2')\cup (W\cap X_2)|\leq 4\rho^4n$, these vertices can be absorbed into $P_2^*$ and the remaining vertices from $(T_2\setminus T_2')\cup (W\setminus V(H_2))$ can be absorbed into $P_1^*$.   
\end{proof}

\section{Conclusion}\label{sec:conclusion}

After determining the robust structure of the graph, we show that regularity can be applied so that the reduced graph satisfies certain degree conditions which allow us to find a perfect matching.  Gy\'arf\'as, S\'ark\"ozy, and Szemer\'edi \cite{GySS} proved a stability version of Theorem \ref{GGthm} and their proof made use of the ``connected matching" approach, but they introduced a method which avoided the use of regularity.  It would be interesting to see if their method can be applied here to avoid the use of the regularity; this is part of the reason we proved Lemma \ref{robuststructure} without the use of regularity.

Erd\H{o}s, Gy\'arf\'as, and Pyber \cite{EGP} conjectured that every $r$-colored $K_n$ has a partition into at most $r$ monochromatic cycles.  This conjecture was recently disproved for $r\geq 3$ by Pokrovskiy \cite{P}, although his examples do have $r$ disjoint monochromatic cycles which together miss only one vertex.  Gy\'arf\'as, Ruszink\'o, S\'ark\"ozy, and Szemer\'edi proved that a monochromatic cycle partition can be found with at most $O(r\log r)$ cycles \cite{GyRSS1} and for $r=3$, proved that a partition can be found with at most $17$ cycles \cite{GyRSS2}.  It would be interesting to determine if a partition can be found with at most $4$ cycles for $r=3$, or even better, a partition with $4$ cycles having the extra condition that one of the cycles has order $1$.  We believe that the methods introduced here could provide an approach to this problem and this is part of the reason that Definition \ref{nicepartition} is stated for $r$ colors.

\section{Acknowledgements}

We greatly appreciate the work of the referees.  Their careful reading resulted in many improvements to the paper.

\end{document}